\documentclass[Journal]{IEEEtran}
\IEEEoverridecommandlockouts
\ifCLASSINFOpdf
\else
\fi

\usepackage{mathrsfs}
\usepackage{color}
\usepackage{amsfonts,amssymb}
\usepackage{amsfonts}
\usepackage{subfigure}
\usepackage{booktabs}
\usepackage{graphicx} %插入图片的宏包
\usepackage{float} %设置图片浮动位置的宏包
\usepackage{amsmath}
\usepackage{enumerate}
\usepackage{algorithm}
\usepackage{algpseudocode}
\usepackage{bm}
\usepackage{makecell}
\usepackage{multirow}
\usepackage{epstopdf}
\usepackage{mathtools}
\usepackage{bbm}
\usepackage{dsfont}
\usepackage{pifont}
\usepackage{amsthm}
\usepackage{cite}
\usepackage{balance}
\usepackage[T1]{fontenc}
\usepackage{tikz}

\newtheorem{remark}{Remark}

\newtheorem{theorem}{Theorem}
\newtheorem{lemma}{Lemma}

\newtheorem{assumption}{Assumption}

\newtheorem{definition}{Definition}

\newcommand\tr{\operatorname{tr}}

\newcommand\diag{\operatorname{diag}}

\begin{document}
\title{\LARGE \bf  Near-Optimal Mixed Strategy for Zero-Sum Linear-Quadratic Differential Games}
\author{Tao Xu, Wang Xi, and Jianping He
	\thanks{The authors are with the Department of Automation, Shanghai Jiao Tong University, and Key Laboratory of System Control and Information Processing, Ministry of Education of China, Shanghai 200240, China. E-mail: \{Zerken, bddwyx, jphe\}@sjtu.edu.cn.Preliminary results of this work were presented at IEEE Conference on Decision and Control (CDC), 2023 \cite{xuDifferentialGameMixed2023b}.} }

\maketitle

\begin{abstract}
    Deriving analytic solutions for optimal mixed strategies in zero-sum linear-quadratic differential games (ZSLQDGs) remains an open problem. In this paper, we analytically synthesize near-optimal mixed strategies for ZSLQDGs and establish rigorous performance certifications. Specifically, we construct a surrogate pure-strategy stochastic differential game (SDG) by matching the first two moments of the mixed strategies. This method achieves an $\mathcal{O}(\bar{\pi}^2)$ weak approximation of state distributions and expected costs with respect to the maximum commitment delay $\bar{\pi}$. 
    By analytically resolving the surrogate SDG, we derive closed-form optimal control laws for the matched moments. 
    Crucially, we reveal that the surrogate game is governed by a Generalized Riccati Differential Equation (GRDE), which explicitly dictates a dynamic energy allocation law for variance injection. Building on these solutions, we propose a robust dual-routing architecture to execute the near-optimal mixed strategies. Furthermore, we certify that both the global value approximation error and the strategy suboptimality gaps are bounded by $\mathcal{O}(\bar{\pi}^{\frac{1}{2}})$. Finally, numerical experiments on a double-integrator pursuit-evasion game illustrate the induced physical behaviors and validate the theoretical bounds.
\end{abstract}

\begin{IEEEkeywords}
	Zero-Sum Linear-Quadratic Differential Game, Mixed Strategy, Stochastic Differential Game, Weak Approximation.
\end{IEEEkeywords}

\section{Introduction}
Zero-sum differential games, first formalized by Isaacs \cite{isaacsDifferentialGamesMathematical1999}, provide a fundamental framework for modeling antagonistic interactions in continuous time, where one player's gain is exactly the opponent's loss. However, the generality of nonlinear dynamics and cost functionals in zero-sum differential games often leads to intractable Hamilton-Jacobi-Isaacs (HJI) equations. Restricting to linear dynamics and quadratic costs, a zero-sum linear-quadratic differential game (ZSLQDG) admits a remarkable simplification: the game value and optimal pure strategies can be analytically characterized via coupled Riccati differential equations \cite{basarUniquenessNashSolution1976, jacobsonOptimalStochasticLinear2003, weerenAsymptoticAnalysisLinear1999}. This analytic solution not only guarantees global optimality but also facilitates real-time implementation. As a result, ZSLQDGs have become a cornerstone in applications ranging from pursuit--evasion and missile guidance to secure networked control \cite{hoDifferentialGamesOptimal1965a, menonGuidanceLawsSpacecraft1988, turetskyMissileGuidanceLaws2003, liDefendingAssetLinear2011, jagatNonlinearControlSpacecraft2017, aggarwalLinearQuadraticZeroSum2024}.

While classical ZSLQDG theory focuses on pure strategies, where deterministic feedback rules map states to controls, a mixed-strategy formulation naturally extends this paradigm by allowing players to randomize over a family of feedback laws. Mixed strategies have been extensively studied in various fields such as economics \cite{harrisExistenceSubgameperfectEquilibrium1995}, generative adversarial networks \cite{goodfellowGenerativeAdversarialNets2014}, reinforcement learning \cite{perkinsMixedStrategyLearningContinuous2017a}, robotics \cite{petersLearningMixedStrategies2022}, and prominently in pursuit-evasion games \cite{hespanhaProbabilisticPursuitevasionGames2000,vidalProbabilisticPursuitevasionGames2002a, islerRandomizedPursuitevasionPolygonal2005, islerRandomizedPursuitEvasionLocal2006}. In continuous-time games, however, naively defining mixed strategies encounters the fundamental pathology of instantaneous randomization: without a rigorous temporal definition of strategic commitment, each player might revise their randomized choice at the same instant, undermining any meaningful strategic commitment \cite{aumannSpacesMeasurableTransformations1960, aumannBorelStructuresFunction1961, aumann28MixedBehavior1964}. To resolve this, one must explicitly introduce a \textit{commitment delay} into the definition of mixed strategies \cite[p.114]{engwerdaLQDynamicOptimization2005}, ensuring that once a player samples a control action, it remains fixed for a prescribed time interval before the opponent can observe and respond. The commitment delay pattern is therefore essential both for the mathematical well-posedness of mixed strategy designs and for capturing realistic digital control scenarios where strategic interventions cannot be renegotiated infinitely fast.

\subsection{Motivations}
Classical pure-strategy equilibria are fully characterized by the elegant Riccati-equation framework. However, extending this methodology to randomized controls introduces formidable obstacles. Specifically, one must handle infinite-dimensional measure-valued spaces, enforce non-trivial commitment delays, and reconcile continuous-time dynamics with discrete-time probability laws. To date, closed-form analytic descriptions for optimal mixed strategies remain open. It leaves a critical gap in our understanding of how to systematically exploit randomization in continuous-time adversarial control.

Deriving analytic solutions for optimal mixed strategies in ZSLQDGs is essential for advancing the broader theory of differential games. First, closed-form mixed-strategy characterizations would explicitly illuminate the fundamental mechanisms of randomization, clarifying how players dynamically balance their expected control costs against the benefits of adversarial uncertainty. Second, establishing exact analytical benchmarks would dramatically accelerate the development of numerical methods for general mixed-strategy differential games, providing rigorous targets for algorithmic convergence and performance validation.

\subsection{Challenges}
The primary challenge in solving optimal mixed strategies for differential games stems from the inherent intractability of optimizing over infinite-dimensional probability measure spaces. As summarized in Table \ref{tab:game_comparison}, our prior works initiated a weak approximation framework to circumvent this issue. Here, ``weak'' refers to the convergence in distribution of the state-cost processes. We provide a rigorous mathematical treatment of this framework in Section III. However, these methods either lack explicit strategy synthesis \cite{xuDifferentialGameMixed2023b} or rely heavily on numerical control-space discretization \cite{xu2026optimalmixedstrategyzerosum}. Consequently, these existing frameworks inherently yield numerical approximations rather than exact analytic laws, leaving a critical gap for solving ZSLQDGs analytically.

To derive analytic solutions for ZSLQDGs, we pivot to a fundamentally different surrogate construction: parameterizing Markovian mixed strategies via their statistical moments over unbounded spaces. This shift introduces a distinct theoretical hurdle in performance certification. Although the general bounding framework established in \cite{xu2026optimalmixedstrategyzerosum} remains applicable, it provides implicit, term-heavy upper bounds that obscure the exact scaling laws. The crux of the challenge lies in fully exploiting the linear-quadratic structure to derive a refined error analysis, which is essential to collapse these implicit execution penalties into clean, explicit $\mathcal{O}(\bar{\pi}^{1/2})$ error bounds.

\subsection{Contributions}
This paper presents the first closed-form analytical characterization of near-optimal mixed strategies for ZSLQDGs. As contrasted in Table \ref{tab:game_comparison}, our main contributions are three-fold:

\begin{itemize}
    \item \textbf{Markovian formulation and high-order weak approximation:} We construct a surrogate pure-strategy SDG via moment matching. By incorporating state-dependent energy bounds, this formulation natively accommodates unbounded Markovian feedbacks. We prove that this surrogate SDG guarantees the weak approximation accuracy of state distributions and expected costs by $\mathcal{O}(\bar\pi^2)$.
    \item \textbf{Analytic solutions and dynamic energy allocation:} By analytically resolving the surrogate SDG, we bypass intractable HJI partial differential equations. Instead, we reveal that optimal moment trajectories are governed by a Generalized Riccati Differential Equation (GRDE). This uncovers a novel \textit{dynamic energy allocation law} that explicitly dictates variance injection. Based on these closed-form moments, we propose a robust dual-routing architecture for practical strategy execution.
    \item \textbf{Explicit suboptimality certification:} By fully exploiting the linear-quadratic problem structure to design a refined error-decoupling analysis, we successfully replace the implicit implementation penalties of previous works with clean, explicit metrics. We certify that both the value approximation error and the strategy suboptimality gaps are bounded by $\mathcal{O}(\bar\pi^{1/2})$.
\end{itemize}

\begin{table*}[t]
\centering
\footnotesize
\setlength{\tabcolsep}{2pt} 
\renewcommand{\arraystretch}{1.1} % 微调行高，避免表头过挤
\caption{Comparison of game formulations and solution methods across different works}
\label{tab:game_comparison}
\begin{tabular}{l *{11}{c}}
\toprule
& \multicolumn{5}{c}{\textbf{Game Formulation}} 
& \multicolumn{6}{c}{\textbf{Solution Methods}} \\
\cmidrule(lr){2-6} \cmidrule(lr){7-12}
& \multirow{2}{*}[-2ex]{\makecell{Dynamics}} 
& \multirow{2}{*}[-2ex]{\makecell{Control\\Space}} 
& \multirow{2}{*}[-2ex]{\makecell{Cost\\Function}} 
& \multirow{2}{*}[-2ex]{\makecell{Information\\Structure}} 
& \multirow{2}{*}[-2ex]{\makecell{Strategy\\Constraints}} 
& \textbf{(Step 1)} 
& \textbf{(Step 2)} 
& \textbf{(Step 3)} 
& \textbf{(Step 4)} 
& \multicolumn{2}{c}{\textbf{(Step 5)}} \\
\cmidrule(lr){7-10} \cmidrule(lr){11-12}
& & & & & 
& \makecell{Design\\$\tilde{G}$} 
& \makecell{Weak\\Approx.} 
& \makecell{Solve\\$\tilde{G}$} 
& \makecell{Strategy\\Synthesis}
& \makecell{Value\\Approx.}
& \makecell{Suboptimality\\Gap} \\
\midrule
\makecell{This \\ work}
& linear & unbounded & quadratic & \makecell{Markovian\\feedback} & \makecell{state-dep.\\2nd-moment\\ bound} 
& \makecell{moment\\matching} & $\mathcal{O}(\bar{\pi}^2)$ & GRDE & ZOH & $\mathcal{O}(\bar{\pi}^{\frac{1}{2}})$ & $\mathcal{O}(\bar{\pi}^{\frac{1}{2}})$ \\
\midrule
\cite{xu2026optimalmixedstrategyzerosum} 
& bounded & compact & bounded & NAD & $\times$ 
& \makecell{control\\space\\discretization} & $\mathcal{O}(\bar{\pi})$ & HJI & ZOH & \makecell{$\max\{\epsilon_1,\epsilon_2\}$\\$+\mathcal{O}(\bar{\pi})$} & \makecell{$\max\{\epsilon_4,\epsilon_5\}$ \\ $+\epsilon_3+\mathcal{O}(\bar{\pi})$} \\
\midrule
\cite{xuDifferentialGameMixed2023b} 
& linear & unbounded & \makecell{polynomial\\growth} & open-loop & $\times$ 
& \makecell{moment\\matching} & $\mathcal{O}(\bar{\pi})$ & HJI & $\times$ & $\times$ & $\times$ \\
\bottomrule
\end{tabular}
\end{table*}

\subsection{Related Works}
The instantaneous-randomization issue in continuous-time mixed strategies was raised in the seminal work \cite{aumann28MixedBehavior1964}. Subsequent research on mixed strategies in differential games has proceeded along two complementary lines, distinguished by how controls are modeled. The first line retains real-valued feedback controls and enforces commitment delays in the strategy definition to prevent pathological instantaneous randomization \cite{cardaliaguetDifferentialGamesAsymmetric2007, buckdahnValueFunctionDifferential2013, cardaliaguetPureRandomStrategies2014,buckdahnValueMixedStrategies2014, flemingMixedStrategiesDeterministic2017a, xu2026optimalmixedstrategyzerosum}. In these works, commitment delays are typically assumed to vanish asymptotically to recover the classical game value, which is characterized as the unique viscosity solution of an HJI equation. The second line builds on the relaxed-control framework \cite{berkovitzRelaxedControls2012}, treating admissible controls directly as measure-valued functions and leveraging martingale or weak-formulation techniques to define equilibria \cite{sirbuStochasticPerronsMethod2014,kunischOptimalControlUndamped2016,coxControlledMeasurevaluedMartingales2024}. In this paper, we adopt the real-valued control perspective with explicit, non-vanishing commitment delays, aligning with the first line of research to preserve direct physical implementability.

Despite the solid theoretical foundation provided by these studies, analytic solutions for optimal mixed strategies in ZSDGs remain critically limited. Prior efforts have focused primarily on proving the existence of, and characterizing, the value function under vanishing delays, rather than yielding explicit strategy laws \cite{cardaliaguetDifferentialGamesAsymmetric2007, buckdahnValueFunctionDifferential2013, cardaliaguetPureRandomStrategies2014,buckdahnValueMixedStrategies2014}. First, solving the associated HJI equations in infinite-dimensional measure spaces is notoriously intractable: even pure-strategy solutions lack closed-form expressions except in very special cases \cite{elliottExistenceValueStochastic1976}, and classical methods such as the Pontryagin maximum principle \cite{wangPontryaginsMaximumPrinciple2010} or viscosity-solution techniques \cite{flemingControlledMarkovProcesses2006} do not readily generalize. Second, only a handful of studies have tackled specific instances: Kumar's generalized bomber-and-battleship game yields an explicit mixed-strategy solution \cite{kumarOptimalMixedStrategies1980}, and Fleming and Hernández established the existence of approximately optimal strategies \cite{flemingMixedStrategiesDeterministic2017a}. While more recent works often resort to neural-network-based policy learning \cite{dou2019finding, martinFindingMixedstrategyEquilibria2023}, these approaches typically sacrifice analytical transparency. Third, forcing commitment delays toward zero induces chattering controls that switch infinitely fast, undermining both physical implementability and system robustness \cite{levantChatteringAnalysis2010}.

More recently, our prior works initiated an SDG weak approximation framework to circumvent the infinite-dimensional measure optimization. While an open-loop moment-matching surrogate was explored in \cite{xuDifferentialGameMixed2023b}, a systematic near-optimal design was later established in \cite{xu2026optimalmixedstrategyzerosum} utilizing control-space discretization. Although the latter provides rigorous $\mathcal{O}(\bar\pi)$ performance guarantees for general nonlinear ZSDGs, its structural reliance on spatial discretization fundamentally restricts the output to numerical approximations. This methodological gap directly motivates our current work. By specializing the weak approximation framework to ZSLQDGs and parameterizing Markovian mixed strategies via their statistical moments, we completely bypass numerical spatial discretization. This paradigm shift enables us to derive exact analytic mixed strategies governed by GRDEs, elevate the weak approximation accuracy to $\mathcal{O}(\bar\pi^2)$, and rigorously certify the suboptimality gaps, ultimately yielding closed-form strategies that are directly implementable in digital systems.

\textbf{Organization.} The remainder of this paper is organized as follows. Section II formally formulates the ZSLQDG problem under mixed strategies. Section III introduces the SDG weak approximation framework and proves that the moment-matching surrogate game achieves a second-order approximation accuracy. In Section IV, we analytically resolve the surrogate game to synthesize the near-optimal mixed strategies and rigorously certify the bounds for both the value approximation error and the strategy suboptimality gaps. Section V validates our theoretical results and explores the physical behaviors of mixed strategies via a 2-D double-integrator pursuit-evasion game. Finally, Section VI concludes the paper.

\textbf{Notation.} Let $\pi = \{t_{0} < t_{1} < \ldots < t_{N} = T\}$ with a finite positive integer $N$ be a partition of a time interval $[t_0, T]$, and $\Pi_{[t_0,T]}$ be the set of all partitions on $[t_0,T]$. We define the lower bound, upper bound, and order of $\pi$ as follows:
\begin{equation*}
    \underline{\pi} \triangleq \min_{0 \leq k \leq N-1}(t_{k+1}-t_{k}),\; \bar{\pi} \triangleq \max_{0 \leq k \leq N-1}(t_{k+1}-t_{k}),\;|\pi|\triangleq N.
\end{equation*} 
A partition $\pi$ is called equispaced if $\underline{\pi} = \bar{\pi}$. 
The $k$-th time interval, $[t_{k-1}, t_k)$ is denoted as $\Delta_k$, with length $|\Delta_k|\triangleq t_k-t_{k-1}$. The indicator function $\mathbb{I}_{\Delta_k}(t)$ equals $1$ if $t\in \Delta_k$, otherwise equals $0$. 
$\zeta_\pi:[t_0,T]\to\{1,\ldots,|\pi|\}$ is the index function of $\pi$, and $\delta_\pi:[t_0,T]\to[\underline{\pi},\bar{\pi}]$ is the interval length function of $\pi$, i.e., 
\[\zeta_{\pi}(t) \triangleq \sum_{k=1}^{|\pi|} k\,\mathbb{I}_{\Delta_k}(t), \quad \delta_{\pi}(t)\triangleq\sum_{k=1}^{|\pi|}|\Delta_k|\,\mathbb{I}_{\Delta_k}(t).\] 
For a $U$-valued random variable $x$ and a map $h:U\to \mathbb{R}^d$, the expectation and covariance of $h(x)$ are denoted as $\mathbb{E}[h(x)]$ and $\mathbb{D}[h(x)]$, respectively. Let $\mathcal{P}(U)$ denote the set of probability measures on $U$. We denote the set of positive semi-definite matrices of dimension $n$ as $S_{+}^{n}$. Given $A, B\in\mathbb{R}^{n\times n}$, we use $A \succeq B$ to indicate $A-B \in S_{+}^{n}$.

\section{Problem Formulation}
In this section, we first present the system dynamics and the associated quadratic cost functional. Then, we define the admissible mixed strategies as conditionally independent Markovian randomized feedbacks and characterize the state-dependent energy constraints. Finally, we formalize the game value and outline the main problems of interest.

\subsection{Dynamics and Cost} 
Consider a continuous-time ZSLQDG defined on the time horizon $[t_0, T]$. In contrast to the classical pure-strategy paradigm where control inputs are deterministic functions, we consider a broader class of games where players may randomize their actions. Consequently, the control inputs $u(t)$ and $v(t)$ are generally modeled as stochastic processes. The dynamics and cost of the game are as follows:
\begin{equation*}\label{eq:G_lq}
(\mathbf{G}_{lq})\!:\!\left\{\begin{aligned}
&\dot{x}(t) = Ax(t) \!+\! B_{1}u(t) \!+\! B_{2}v(t),\quad x(t_0)= x_0,\\
&J(t_0,x_0,u,v) = \mathbb{E}\bigg\{\int_{t_0}^T\!\!\big[x^{\top}(t)Qx(t)+u^{\top}(t)R_1u(t)\\
&\qquad\qquad\quad - v^{\top}(t)R_2v(t)\big] dt + x^{\top}(T)Q_Tx(T)\bigg\},
\end{aligned}\right.
\end{equation*}
where $x(t) \in \mathbb{R}^d$ is the state vector. Player 1 (the minimizer) and Player 2 (the maximizer) take their control inputs in unbounded continuous spaces $U = \mathbb{R}^{d_1}$ and $V = \mathbb{R}^{d_2}$, respectively. The expectation operator $\mathbb{E}[\cdot]$ encapsulates the stochasticity intentionally injected via the players' randomized mixed strategies, whose rigorous definition will be formalized in Section II-B.

\begin{assumption}[System matrices]\label{ass:system_matrices}
The weighting matrices $Q$, $Q_T$, and $R_i$ for $i=1,2$, are positive definite. Additionally, the pair $(A,B_1)$ is stabilizable, and the pair $(A, Q^{\frac{1}{2}})$ is detectable.
\end{assumption}

For an individual realized trajectory, we define the accumulated cost function evaluated at time $t\in[t_0,T]$ as
\begin{equation}\label{eq:accumulated_cost}
\operatorname{cost}(t)=\left\{\begin{array}{ll}
\int_{t_0}^t h(s,x,u,v)ds,  & t < T, \\ 
\int_{t_0}^t h(s,x,u,v)ds + g(x(T)), & t=T,
\end{array}\right.
\end{equation}
where $h(s,x,u,v) = x^\top(s)Qx(s) + u^\top(s)R_1u(s)-v^\top(s)R_2v(s)$ and $g(x(T)) = x^\top(T) Q_T x(T)$. By definition, $\mathbb{E}[\operatorname{cost}(T)] = J(t_0,x_0,u,v)$.

\subsection{Admissible Mixed Strategy}
In realistic digital control systems, it is standard for players to update their actions at discrete sampling instants due to communication latencies and computational constraints. To align with this practical mechanism, we introduce the commitment delay pattern into the continuous-time game.

\begin{assumption}[Information structure and commitment delay]\label{ass:commit}
The control processes are governed by a commitment delay pattern $\pi = \{t_0 < t_1 < \dots < t_N = T\}$. During each sampling interval $[t_{k}, t_{k+1})$ for $k \in \{0, \dots, N-1\}$, both players must commit to a constant control action determined at the sampling instant $t_{k}$ based on the available state $x(t_{k})$, i.e., $u(t) = u_{k}$ and $v(t) = v_{k}$ for $t \in [t_{k}, t_{k+1})$.
\end{assumption}
While this assumption on shared commitment delay pattern is standard in the literature of sampled-data differential games \cite{buckdahnValueFunctionDifferential2013,buckdahnValueMixedStrategies2014}, we acknowledge that in practical decentralized scenarios, players may operate on different temporal grids. Analyzing such games with asynchronous delays, though beyond the scope of this paper, remains an important direction for future research.

Furthermore, since the control spaces $U$ and $V$ are unbounded, unrestricted randomization could yield infinite expected costs and destroy the well-posedness of the LQ framework. Therefore, it is necessary to physically bound the randomization. 
In linear feedback systems, the nominal control effort naturally scales linearly with the state deviation \cite{anderson2007optimal}. Concurrently, in networked control environments under Signal-to-Noise Ratio (SNR) constraints, the variance of injected control noise is fundamentally proportional to the signal power (i.e., state magnitude) \cite{braslavsky2007feedback, wonham1967optimal}. Motivated by these physical rationales, we impose state-dependent second-moment constraints on the probability measures, ensuring that the total injected stochastic energy dynamically scales with the system's current deviation.

\begin{definition}[Admissible mixed strategy]\label{def:admissible_mixed}
An admissible mixed strategy for Player 1, denoted as $\alpha \in \mathcal{A}_m^\pi$, is a sequence of measurable Markovian maps $\{\mu_k\}_{k=0}^{N-1}$ such that:
\begin{equation}
\mu_k : \mathbb{R}^d \to \mathcal{P}(U), \quad x(t_{k}) \mapsto \mu_k(\cdot \mid x(t_{k})).
\end{equation}
At each sampling instant $t_{k}$, Player 1 samples an action $u_{k} \sim \mu_k(\cdot \mid x(t_{k}))$. Additionally, the probability measures are subject to the state-dependent second-moment constraint:
\begin{equation}\label{eq:energy_constraint_1}
\int_{U} \|u\|^2 \mu_k(du \mid x(t_{k})) \le \gamma_1^2 \|x(t_{k})\|^2,
\end{equation}
where $\gamma_1 > 0$ represents the actuation capacity coefficient. 

Symmetrically, the admissible strategy space $\mathcal{B}_m^\pi$ for Player 2 is defined as a sequence of measurable Markovian maps $\{\nu_k\}_{k=0}^{N-1}$ subject to the actuation capacity $\gamma_2 > 0$:
\begin{equation}\label{eq:energy_constraint_2}
\int_{V} \|v\|^2 \nu_k(dv \mid x(t_{k})) \le \gamma_2^2 \|x(t_{k})\|^2.
\end{equation}
\end{definition}

To preserve fairness and non-anticipativity in this simultaneous-move game, the intrinsic randomization mechanisms (i.e., random seeds) employed by the players must remain strictly private and causally decoupled. This prevents any player from anticipatively observing or exploiting the opponent's action within the same commitment interval. 
\begin{assumption}[Conditional independence]\label{ass:independence}
Given the commonly observed state $x(t_{k})$, the random controls $u_{k}$ and $v_{k}$ are sampled independently, i.e., the joint probability measure factorizes as $\mathbb{P}(u_{k}, v_{k} \mid x(t_k)) = \mu_k(u_{k} \mid x(t_k))\nu_k(v_{k} \mid x(t_k))$.
\end{assumption}

\begin{remark}[Markovian feedback vs. NAD formulation]\label{rem:markov_vs_nad}
For general ZSDGs with bounded control spaces, existing literature \cite{buckdahnValueFunctionDifferential2013, buckdahnValueMixedStrategies2014, xu2026optimalmixedstrategyzerosum} traditionally defines mixed strategies through the framework of non-anticipative strategies with delay (NAD). A core motivation for adopting the NAD functional mapping is to rigorously characterize the game value as the unique viscosity solution to the associated HJI equation. In contrast, for the ZSLQDG addressed in this paper, our weak approximation framework allows the surrogate game to be solved analytically via a GRDE. Since the GRDE admits explicit classical solutions, the NAD framework and viscosity solutions are no longer necessitated, which directly yields an easily implementable Markovian control architecture.
\end{remark}

\subsection{Problems of Interest}
The sequential execution of any admissible mixed strategy pair $(\alpha, \beta) \in \mathcal{A}_m^\pi \times \mathcal{B}_m^\pi$, coupled with the initial state $x_0$, uniquely induces a joint probability measure over the sample paths of the control processes $u$ and $v$. Consequently, the expected cost under a specific strategy pair is unambiguously defined as $J(t_0, x_0, \alpha, \beta)$, which evaluates the cost functional under this induced measure.

To investigate the equilibrium, we define the upper and lower value functions as:
\begin{equation}\label{eq:upper_lower_value}
\begin{aligned}
V_{m,+}^\pi(t_0,x_0) &\triangleq \inf_{\alpha \in \mathcal{A}_m^\pi} \sup_{\beta \in \mathcal{B}_m^\pi} J(t_0,x_0,\alpha,\beta), \\
V_{m,-}^\pi(t_0,x_0) &\triangleq \sup_{\beta \in \mathcal{B}_m^\pi} \inf_{\alpha \in \mathcal{A}_m^\pi} J(t_0,x_0,\alpha,\beta).
\end{aligned}
\end{equation}

We are interested in the following three core problems:
\begin{itemize}
\item \textbf{Existence of game value.} Establish whether the saddle-point equilibrium exists for $\mathbf{G}_{lq}$ under the defined spaces $\mathcal{A}_m^\pi \times \mathcal{B}_m^\pi$, i.e., $V_{m,+}^\pi(t_0,x_0) = V_{m,-}^\pi(t_0,x_0)$.
\item \textbf{State and cost characterization.} Given any pair of mixed strategies, rigorously characterize the continuous-time evolution of the system state distribution and the accumulation of expected costs over the horizon.
\item \textbf{Near-optimal strategy and certification.} Derive analytic mixed strategies by solving a surrogate pure-strategy SDG via GRDE, and certify both the value approximation error and the strategy suboptimality gap.
\end{itemize}

To address these problems, our main idea is to weakly approximate the original mixed-strategy game $\mathbf{G}_{lq}$ by a pure-strategy SDG $\mathbf{\tilde{G}}_{lq}$, then derive both approximated game value and near-optimal mixed strategy with performance certifications. The key points are illustrated in Fig. \ref{fig:roadmap}.
\begin{figure}[h]
    \centering
    \includegraphics[width=0.8\linewidth]{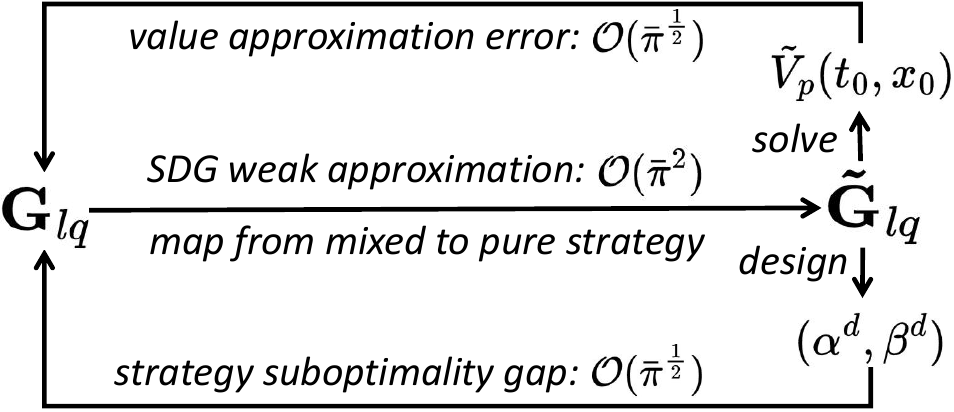}
    \caption{Illustration of the main ideas.}
    \label{fig:roadmap}
\end{figure}

\section{Game Value Existence and SDG Weak Approximation}
In this section, we address the first two problems of interest outlined in Section II. First, we establish the mathematical well-posedness of the game by proving the existence of a unique saddle-point game value. Second, to analytically characterize the evolution of the system state and expected costs driven by the players' discrete-time randomizations, we introduce a weak approximation framework. By mapping the random interventions to continuous-time diffusion processes, we construct a mathematically tractable surrogate SDG.

\subsection{Existence of Game Value}
Before constructing the surrogate approximation, it is imperative to ensure that the original game $\mathbf{G}_{lq}$ is well-posed under the defined mixed strategy spaces. Benefiting from the ZOH commitment mechanism and the weakly compact probability measure spaces induced by the state-dependent energy constraints, the existence of the game value is guaranteed.

\begin{theorem}[Existence of game value]\label{thm:SE}
The linear-quadratic game $\mathbf{G}_{lq}$ under the admissible Markovian mixed strategy spaces $\mathcal{A}_m^\pi\times\mathcal{B}_m^\pi$ has a unique game value, denoted as $V^\pi_m(t_0,x_0)$.
\end{theorem}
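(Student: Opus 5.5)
We prove the equality $V_{m,+}^\pi=V_{m,-}^\pi$ by backward induction over the commitment intervals of $\pi$. At each sampling instant the stage problem is a static zero-sum game over probability measures. We show that each such game satisfies the hypotheses of a minimax theorem (Sion's or Fan's) and that its value has a quadratic upper bound in the state, so the recursion closes.

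\textbf{Step 1: stage structure.} Under Assumption~\ref{ass:commit}, the controls are constant on $\Delta_{k+1}=[t_k,t_{k+1})$. Writing $\delta=t_{k+1}-t_k$, the state at the next sampling instant is
\[
x(t_{k+1}) = \Phi x_k + \Gamma_1 u_k + \Gamma_2 v_k,
\]
where $\Phi=e^{A\delta}$ and $\Gamma_i=\int_0^{\delta} e^{As}B_i\,ds$. The running cost over the interval is a quadratic form $\ell_k(x_k,u_k,v_k)$, positive definite in $u_k$ and negative definite in $v_k$ (up to the cross terms). Set $W_N(x)=x^\top Q_T x$. We define recursively
\[
W_k(x) = \inf_{\mu}\sup_{\nu}\int\!\!\int \big[\ell_k(x,u,v)+W_{k+1}(\Phi x+\Gamma_1u+\Gamma_2v)\big]\,\mu(du)\,\nu(dv),
\]
where $\mu$ ranges over $\mathcal{K}_1(x)=\{\mu\in\mathcal{P}(U):\int\|u\|^2d\mu\le\gamma_1^2\|x\|^2\}$ and $\nu$ over $\mathcal{K}_2(x)$, defined symmetrically with $\gamma_2$. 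Assumption~\ref{ass:independence} is what makes the payoff bilinear in $(\mu,\nu)$, namely the integral against the product measure. The induction hypothesis is that $W_{k+1}$ is continuous and satisfies $|W_{k+1}(y)|\le c_{k+1}\|y\|^2$.

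\textbf{Step 2: minimax at one stage.} The sets $\mathcal{K}_i(x)$ are convex. They are tight, since the uniform second-moment bound gives tightness by Markov's inequality, and they are closed under weak convergence because $\mu\mapsto\int\|u\|^2d\mu$ is weakly lower semicontinuous. Prokhorov's theorem therefore makes them weakly compact. The payoff $F(\mu,\nu)$ is bilinear, hence convex--concave. For continuity, note that the integrand $f(u,v)$ grows at most quadratically. For fixed $\nu\in\mathcal{K}_2(x)$, the map $u\mapsto\int f(u,v)\,d\nu(v)$ is continuous with quadratic growth. Quadratic growth together with uniformly bounded second moments gives uniform integrability only when moments of order slightly higher than two are controlled; with exactly second moments we only obtain semicontinuity. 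This is the main obstacle.

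\textbf{Step 3: resolving the semicontinuity.} Sion's theorem needs only lower semicontinuity in $\mu$ and upper semicontinuity in $\nu$. A continuous integrand $g$ with $g\ge -C(1+\|u\|^2)$ has $\mu\mapsto\int g\,d\mu$ weakly lower semicontinuous on a second-moment-bounded set. We therefore split $f$ into a part that is bounded below by $-C\|u\|^2$ and $-C\|v\|^2$ terms. Since the $-\|v\|^2$ terms do not depend on $\mu$, they contribute constants once $\nu$ is fixed, and the $u$-dependence of $\ell_k+W_{k+1}$ is then controlled from below by a quadratic. Lower semicontinuity in $\mu$ follows; upper semicontinuity in $\nu$ follows symmetrically. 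If this splitting fails because of indefinite cross terms in $W_{k+1}$, the fallback is to work with the moment-based topology instead: restrict first to the mean and second moment, which is a finite-dimensional compact convex set and determines all quadratic integrals. This step requires showing that $W_{k+1}$ is exactly quadratic, which holds by induction if each stage value is attained at moments that are linear or quadratic in $x$. Sion's theorem then gives $\inf\sup=\sup\inf$ at each stage. The bound $|W_k(x)|\le c_k\|x\|^2$ follows from the growth estimates, and continuity of $W_k$ follows from Berge's maximum theorem, since $\mathcal{K}_i(x)$ depends continuously on $x$ through the radius $\gamma_i\|x\|$.

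\textbf{Step 4: lifting to the full game.} Measurable selection gives stagewise $\varepsilon$-optimal Markov kernels. The standard dynamic-programming verification then shows $V_{m,+}^\pi(t_0,x_0)\le W_0(x_0)+\varepsilon$ and $V_{m,-}^\pi(t_0,x_0)\ge W_0(x_0)-\varepsilon$. Combined with the trivial inequality $V_{m,-}^\pi\le V_{m,+}^\pi$, this yields the unique value $V_m^\pi(t_0,x_0)=W_0(x_0)$.
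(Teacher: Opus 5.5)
Your skeleton matches the paper's proof: backward induction over $\pi$, a stagewise minimax theorem on the energy-constrained measure sets, and weak compactness from Markov/Prokhorov plus weak lower semicontinuity of the second moment. Your diagnosis is correct: the stage payoff is not weakly continuous under a bare second-moment bound. The paper's proof simply asserts continuity, which already fails for the term $\int u^\top R_1u\,d\mu$. However, your repair in Step 3 does not work as written.

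The lemma you invoke is false. Take $g(u)=-\|u\|^2$, which satisfies $g\ge -C(1+\|u\|^2)$, and let $e$ be a unit vector. Then $\mu_n=(1-\tfrac1n)\delta_0+\tfrac1n\delta_{\sqrt n\,e}$ (with $\delta_0$ the Dirac mass) has second moment $1$ and converges weakly to $\delta_0$. Yet $\int g\,d\mu_n=-1<0=\int g\,d\delta_0$. Weak lower semicontinuity needs an lsc integrand bounded below by a \emph{constant}. On the minimizer's side you can secure this by adding $W_{k+1}\ge 0$ to your induction hypothesis. This propagates, because $\nu=\delta_0$ is always feasible and makes every term of the stage payoff nonnegative. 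Then $f(u,v)+(t_{k+1}-t_k)\,v^\top R_2v\ge 0$. So for fixed $\nu$, the map $u\mapsto\int f(u,v)\,\nu(dv)$ is a nonnegative lsc function minus a finite constant, and $\mu\mapsto F(\mu,\nu)$ is weakly lsc by the Portmanteau theorem.

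Upper semicontinuity in $\nu$, however, does not follow by symmetry, because the stage game is not symmetric. The term $u^\top R_1u$ has the same sign as the state cost and the continuation value, whereas $-v^\top R_2v$ competes against them. For a quadratic continuation $W_{k+1}(y)=y^\top Py$, the $v$-Hessian of the stage integrand is $\delta(\delta B_2^\top PB_2-R_2)+O(\delta^3)$ with $\delta=t_{k+1}-t_k$. That is $\delta$ times the discrete analogue of the paper's $Q_{2,eq}$. Where this has a positive eigenvalue, which is exactly where the paper's Player 2 randomizes, rescale the sequence above along that eigendirection so its second moment is $\gamma_2^2\|x\|^2$. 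It then gives $\limsup_n F(\mu,\nu_n)>F(\mu,\delta_0)$, so Sion's theorem in the weak topology does not apply.

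The remedy is one you named but did not use: Ky Fan's minimax theorem. It needs compactness and lower semicontinuity only on $\mathcal{K}_1(x)$, while concavity (here linearity) in $\nu$ suffices with no topology on $\mathcal{K}_2(x)$. Equivalently, apply Sion with only $\mathcal{K}_1(x)$ compact and $\mathcal{K}_2(x)$ normed by $\int(1+\|v\|^2)\,d|\nu|$, in which $F(\mu,\cdot)$ is continuous.

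Your fallback cannot substitute for this. Once an energy constraint binds, the stage value is only positively homogeneous of degree two (terms like $\|x\|\,|b^\top x|$ appear), not a quadratic form. So the payoff is not determined by first and second moments. Likewise, Berge's theorem requires joint continuity of the payoff, which fails for the same reason. You only need lower semicontinuity of $W_k$, both for the argument above and for measurable selection. You can get it by rescaling $u\mapsto u/\|x\|$ to fixed constraint sets, since a minimum over a fixed compact set of a jointly lsc function is lsc. With these repairs, your Step 4 correctly supplies the lifting from stagewise equality to $V_{m,+}^\pi=V_{m,-}^\pi$, which the paper leaves implicit.
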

\begin{proof}
   We establish the existence of the game value via backward induction, starting from the terminal time $T$. For each interval $[t_{k-1}, t_k)$, we demonstrate the existence of a local saddle point by invoking Sion's Minimax Theorem. We rigorously verify its requisite conditions by showing that: (i) the energy-constrained mixed strategy spaces are weakly compact, as ensured by the uniform second-moment bounds and Prokhorov’s Theorem; and (ii) the local expected cost functional is bilinear and continuous in the weak topology. The recursive coincidence of the local upper and lower values yields the global game value. Full details are provided in Appendix \ref{app:thm:SE}.
\end{proof}

\subsection{Weak Approximation Framework}
Characterizing the continuous-time evolution of the state distribution and expected cost under mixed strategies (our second problem of interest) is analytically intractable. The piecewise-constant random interventions inherently create a complex stochastic process whose exact pathwise behavior cannot be evaluated in closed form. However, since the cost functional of $\mathbf{G}_{lq}$ relies solely on the \textit{expectations} of these trajectories, exact pathwise matching (i.e., strong approximation) is unnecessary. Instead, it is sufficient to capture the macroscopic probability distributions over time.

Let $\operatorname{PG}$ denote the set of continuous functions $\mathbb{R}^d \rightarrow \mathbb{R}$ of at most polynomial growth, i.e., $\psi(\cdot) \in \operatorname{PG}$ if there exist $\kappa_1, \kappa_2>0$ such that $|\psi(x)| \leq \kappa_1\left(1+|x|^{\kappa_2}\right)$ for all $x \in \mathbb{R}^d$. If $\kappa_2=1$, $\psi$ is of linear growth. Moreover, $\operatorname{PG}^n$ denotes the set of $n$-times continuously differentiable functions whose partial derivatives up to order $n$ belong to $\operatorname{PG}$.
Given a partition $\pi$ on $[t_0,T]$, a continuous-time process $\{\tilde{x}(t)\}_{t\in[t_0,T]}$ and a discrete-time process $\{x(k)\}_{k=0}^{|\pi|}$ weakly approximate each other if their probability distributions at the discrete time steps $t_k$ are uniformly close.

\begin{definition}[Weak approximation]\label{def:weak_approximation}
    Given a partition $\pi$ on $[t_0,T]$, a continuous-time stochastic process $\left\{\tilde{x}(t)\right\}_{t \in[t_0, T]}$ is an order $n$ weak approximation of a discrete-time stochastic process $\left\{x(k)\right\}_{k=0}^{|\pi|}$ if, for any polynomial-growth function $\psi \in \operatorname{PG}^{n+1}$, there is a positive constant $C$ independent of $\bar\pi$ such that:
    \begin{equation*}
        \max _{k=0, \ldots, |\pi|}\left|\mathbb{E} \psi(x(k))-\mathbb{E} \psi(\tilde{x}(t_k))\right| \leq C \bar{\pi}^n.
    \end{equation*}
\end{definition}

This concept is generalized to the game setting in \cite{xu2026optimalmixedstrategyzerosum}. A surrogate SDG under pure strategies weakly approximates the original ZSDG under mixed strategies if both the state distributions and the expected accumulated costs are well approximated.
The approximation error bounds defined in terms of $\mathcal{O}(\bar{\pi}^n)$ are asymptotic properties valid in the limit $\bar{\pi} \to 0$. In the context of differential games, this corresponds to high-frequency sampling where the commitment delay is sufficiently small. Within this regime ($\bar{\pi} < 1$), a higher order $n$ provides a strictly superior approximation accuracy.

\begin{definition}[SDG weak approximation \cite{xu2026optimalmixedstrategyzerosum}]\label{def:SDG_weak_approximation}
    A zero-sum SDG $(\mathbf{\tilde{G}})$ under pure strategy spaces $\tilde{\mathcal{A}}_p^\pi\times\tilde{\mathcal{B}}_p^\pi$ is an order $n$ weak approximation of a zero-sum differential game $(\mathbf{G})$ under mixed strategy spaces $\mathcal{A}_m^\pi\times\mathcal{B}_m^\pi$ if: 
    \begin{enumerate}
        \item[(i)] there exist 
    \begin{equation}\label{eq:strategy-map}
        \phi_1: \mathcal{A}_m^\pi \to\tilde{\mathcal{A}}_p^\pi\text{ and }\phi_2: \mathcal{B}_m^\pi \to\tilde{\mathcal{B}}_p^\pi,
    \end{equation}
    where for any $(\alpha, \beta)\in\mathcal{A}_m^\pi\times\mathcal{B}_m^\pi$, one has $\tilde{\alpha}=\phi_1(\alpha)\in\tilde{\mathcal{A}}_p^\pi$ and $\tilde{\beta}=\phi_2(\beta)\in\tilde{\mathcal{B}}_p^\pi$. 
    \item[(ii)] the state trajectory of the SDG $\mathbf{\tilde{G}}$ under strategies $(\tilde{\alpha}, \tilde{\beta})$, i.e., $\{\tilde{x}(t)\}_{t\in[t_0,T]}$, is an order $n$ weak approximation of the state trajectory of the game $\mathbf{G}$ under strategies $(\alpha, \beta)$ on the time steps of $\pi$, i.e., $\{x(t_k)\}_{k=0}^{|\pi|}$.
    \item[(iii)] the accumulated cost trajectory of the SDG $\mathbf{\tilde{G}}$ under strategies $(\tilde{\alpha}, \tilde{\beta})$ approximates the accumulated cost trajectory of the game $\mathbf{G}$ under strategies $(\alpha, \beta)$ in an expected sense that $\left|\mathbb{E}\left[\operatorname{cost}(t_k)\right] - \mathbb{E}\left[\tilde{\operatorname{cost}}(t_k)\right]\right| = \mathcal{O}(\bar{\pi}^n)$ for $k=0,\ldots, |\pi|$.
    \end{enumerate}
\end{definition}
Unlike classical weak approximation for SDEs, which typically concerns the distribution of the state process $x_t$, our definition enforces the joint weak convergence of the state process and the accumulated cost process $(x_t, \text{cost}_t)$. This joint convergence is essential for differential games because the value function is defined via the cost functional. Convergence of the state alone is insufficient to guarantee the convergence of the game value. By matching both the state and the cost process, our framework ensures that the saddle point of the surrogate SDG provides a mathematically rigorous approximation of the original game value.

\subsection{Design of the Surrogate SDG $\mathbf{\tilde{G}}_{lq}$}
The intuition behind our moment-matching design is that, instead of choosing a distribution $\mu_k$, Player 1 in the surrogate game directly controls the conditional expectation and conditional covariance of their actions, effectively shaping the drift and diffusion of the system.

Formally, let $\{W_t\}_{t\in[t_0,T]}$ be a standard $(d_1+d_2)$-dimensional Wiener process defined on a probability space $(\tilde{\Omega},\tilde{\mathcal{F}},\tilde{P})$ with filtration $\tilde{\mathbb{F}}$. 
We define the virtual pure controls for the surrogate game as the tuple processes $\tilde{u}(t) = (\tilde{\Gamma}_1(t),\tilde{\Lambda}_1(t))$ and $\tilde{v}(t) = (\tilde{\Gamma}_2(t),\tilde{\Lambda}_2(t))$, where $\tilde{\Gamma}_i(t)$ takes values in $\mathbb{R}^{d_i}$ and $\tilde{\Lambda}_i(t)$ takes values in $\mathbb{R}^{d_i \times d_i}$. The covariance matrix is defined as $\tilde{\Sigma}_i(t) \triangleq \tilde{\Lambda}_i(t)\tilde{\Lambda}_i^{\top}(t) \succeq \mathbf{0}$.

Driven by these moment-based controls, the continuous-time surrogate ZSLQDG is constructed as follows:
\begin{equation*}
    (\mathbf{\tilde{G}}_{lq})\!:\!\left\{\begin{aligned}
        &d \tilde{x}(t) = \left(A\tilde{x}(t) + B_1\tilde{\Gamma}_1(t) + B_2\tilde{\Gamma}_2(t) \right)d t \\
        & \qquad+\delta_{\pi}^{\frac{1}{2}}(t)\left[B_1\tilde{\Lambda}_1(t), B_2\tilde{\Lambda}_2(t)\right]d W_t, \; \tilde{x}(t_0) = x_0,\\
        & \tilde{J}(t_0,x_0,\tilde{u},\tilde{v}) = \mathbb{E}\!\int_{t_0}^T\!\left[\tilde{x}^{\top}(t)Q\tilde{x}(t) \!+\! \tilde{\Gamma}_1^{\top}(t)R_1\tilde{\Gamma}_1(t)\right.\\
        &\quad\qquad- \tilde{\Gamma}_2^{\top}(t)R_2\tilde{\Gamma}_2(t) + \tr(\tilde{\Lambda}_1(t)^{\top}R_1\tilde{\Lambda}_1(t)) \\
        &\quad\qquad\left.- \tr(\tilde{\Lambda}_2(t)^{\top}R_2\tilde{\Lambda}_2(t))\right] dt +\tilde{x}^{\top}(T)Q_T\tilde{x}(T).
    \end{aligned}\right.
\end{equation*}

A fundamental advantage of this formulation emerges when translating the state-dependent energy constraints. Using the well-known variance identity $\mathbb{E}[\|u\|^2] = \|\mathbb{E}[u]\|^2 + \operatorname{tr}(\mathbb{D}[u])$, the original integral constraints over probability measures gracefully translate into deterministic algebraic boundaries. This enables us to directly define the admissible strategies for the surrogate game symmetrically to the original game.

\begin{definition}[Admissible pure strategy]\label{def:tilde_admissible}
    An admissible pure strategy for Player 1 in the surrogate game, denoted as $\tilde{\alpha} \in \tilde{\mathcal{A}}_p^\pi$, is a sequence of measurable Markovian maps $\{\tilde{\mu}_k\}_{k=0}^{N-1}$:
    \begin{equation}
        \tilde{\mu}_k : \mathbb{R}^d \to \mathbb{R}^{d_1} \times \mathbb{R}^{d_1 \times d_1}, \quad \tilde{x}(t_k) \mapsto (\tilde{\Gamma}_1(t_k), \tilde{\Lambda}_1(t_k)).
    \end{equation}
    At each sampling instant $t_k$, the extracted moments are held constant for $t \in [t_k, t_{k+1})$ and must satisfy the state-dependent algebraic boundary:
    \begin{equation}\label{eq:tilde_energy_constraint_1}
        \|\tilde{\Gamma}_1(t_k)\|^2 + \tr(\tilde{\Sigma}_1(t_k)) \le \gamma_1^2 \|\tilde{x}(t_k)\|^2,
    \end{equation}
    where $\tilde{\Sigma}_1(t_k) \triangleq \tilde{\Lambda}_1(t_k)\tilde{\Lambda}_1^\top(t_k) \succeq \mathbf{0}$. 
    
    Symmetrically, the admissible pure strategy space $\tilde{\mathcal{B}}_p^\pi$ for Player 2 is defined as a sequence of maps $\{\tilde{\nu}_k\}_{k=0}^{N-1}$ mapping $\tilde{x}(t_k)$ to $(\tilde{\Gamma}_2(t_k), \tilde{\Lambda}_2(t_k))$, subject to $\|\tilde{\Gamma}_2(t_k)\|^2 + \tr(\tilde{\Sigma}_2(t_k)) \le \gamma_2^2 \|\tilde{x}(t_k)\|^2$.
\end{definition}

By replacing the infinite-dimensional measure spaces with Euclidean moment matrices, the original game is successfully relaxed into a highly tractable pure-strategy SDG that strictly preserves the LQ structure. Let $\tilde{V}_{p,+}^\pi$ and $\tilde{V}_{p,-}^\pi$ denote its upper and lower value functions, respectively.

\subsection{Strategy Projection and Certification of Order $2$}
To mathematically validate that $\mathbf{\tilde{G}}_{lq}$ satisfies the weak approximation requirements (Definition \ref{def:SDG_weak_approximation}), we must construct explicit strategy projection maps $(\phi_1, \phi_2)$ that connect the two distinct probability spaces.

\begin{definition}[Strategy projection maps]\label{def:map_lq}
    For Player 1 employing an admissible Markovian mixed strategy $\alpha = \{\mu_k\}_{k=0}^{N-1} \in \mathcal{A}_m^\pi$, the projection map $\phi_1$ constructs a surrogate pure strategy $\tilde{\alpha} = \{\tilde{\mu}_k\}_{k=0}^{N-1} \in \tilde{\mathcal{A}}_p^\pi$ by extracting the conditional statistical moments. Specifically, the surrogate controls $(\tilde{\Gamma}_1(t_k), \tilde{\Lambda}_1(t_k))$ evaluated at the mapped state $\tilde{x}(t_k)$ are defined by:
    \begin{equation*}
        \begin{aligned}
            \tilde{\Gamma}_1(t_k) &= \int_{U} u \, \mu_k(du \mid \tilde{x}(t_k)), \\
            \tilde{\Sigma}_1(t_k) &= \int_{U} \big(u - \tilde{\Gamma}_1(t_k)\big)\big(u - \tilde{\Gamma}_1(t_k)\big)^\top \mu_k(du \mid \tilde{x}(t_k)),
        \end{aligned}
    \end{equation*} 
    where $\tilde{\Sigma}_1(t_k) \triangleq \tilde{\Lambda}_1(t_k)\tilde{\Lambda}_1^\top(t_k) \succeq \mathbf{0}$. The map $\phi_2$ for Player 2 is symmetrically defined via $\nu_k$. 
\end{definition}
\begin{remark}[Feasibility]
    By the variance identity $\int \|u\|^2 d\mu = \|\mathbb{E}[u]\|^2 + \tr(\mathbb{D}[u])$, the energy constraint of $\mathbf{G}_{lq}$ intrinsically ensures $\|\tilde{\Gamma}_i(t_k)\|^2 + \tr(\tilde{\Sigma}_i(t_k)) \le \gamma_i^2 \|\tilde{x}(t_k)\|^2$. Thus, the maps $(\phi_1, \phi_2)$ are strictly feasible for $\mathbf{\tilde{G}}_{lq}$.
\end{remark}

With the projection maps rigorously defined, we establish the approximation performance. By matching both the first and second moments, the surrogate game accurately traces the original game dynamics.

\begin{theorem}[$\mathbf{\tilde{G}}_{lq}$ approximation]\label{thm:g_lq_weak_approximation}
    Under the designed projection maps $(\phi_1, \phi_2)$, the surrogate pure-strategy SDG $\mathbf{\tilde{G}}_{lq}$ is an order $2$ weak approximation (i.e., $\mathcal{O}(\bar\pi^2)$) of the original mixed-strategy game $\mathbf{G}_{lq}$.
\end{theorem}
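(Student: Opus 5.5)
The plan is a one-step (local) comparison followed by telescoping over the $|\pi|$ commitment intervals. Because $\mathbf{G}_{lq}$ is linear and every control is held constant on $\Delta_{k+1}=[t_k,t_{k+1})$, each player's randomization enters only through the conditional law of $(u_k,v_k)$ given $x(t_k)$. Write $h_k=|\Delta_{k+1}|$ and $M(h)\triangleq\int_0^{h}e^{As}ds$. In $\mathbf{G}_{lq}$ the one-step map is exact:
\begin{equation*}
x(t_{k+1})=e^{Ah_k}x(t_k)+M(h_k)\big(B_1u_k+B_2v_k\big).
\end{equation*}
In $\mathbf{\tilde G}_{lq}$, with moments frozen at $\tilde x(t_k)$ and $\delta_\pi\equiv h_k$ on the interval, variation of constants gives
\begin{equation*}
\tilde x(t_{k+1})=e^{Ah_k}\tilde x(t_k)+M(h_k)\big(B_1\tilde\Gamma_1+B_2\tilde\Gamma_2\big)+h_k^{1/2}\!\int_{t_k}^{t_{k+1}}\! e^{A(t_{k+1}-s)}\big[B_1\tilde\Lambda_1,B_2\tilde\Lambda_2\big]dW_s .
\end{equation*}

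\textbf{Step 1 (local moments).} Condition on $x(t_k)=\tilde x(t_k)=x$ and use the projection maps of Definition~\ref{def:map_lq}. The conditional means of the two one-step maps then coincide \emph{exactly}. By Assumption~\ref{ass:independence} the conditional covariances are
\begin{equation*}
\textstyle\sum_i M(h_k)B_i\tilde\Sigma_iB_i^\top M(h_k)^\top \quad\text{and}\quad h_k\int_0^{h_k}\sum_i e^{As}B_i\tilde\Sigma_iB_i^\top e^{A^\top s}ds .
\end{equation*}
Expanding $M(h)=hI+\tfrac{h^2}{2}A+\dots$ and $e^{As}=I+sA+\dots$, both equal $h_k^2\sum_iB_i\tilde\Sigma_iB_i^\top+\mathcal O(h_k^3)$. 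The energy constraints \eqref{eq:energy_constraint_1}--\eqref{eq:energy_constraint_2} bound $\tr\tilde\Sigma_i$ by $\gamma_i^2\|x\|^2$, so the covariance mismatch is $\mathcal O(h_k^3\|x\|^2)$. It follows that the conditional second moments $\mathbb E[x_{k+1}x_{k+1}^\top\mid x]$ differ by $\mathcal O(h_k^3\|x\|^2)$.

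\textbf{Step 2 (global propagation).} First treat quadratic and affine test functions $\psi$. The unconditional first and second moments $m_k,P_k$ of both processes obey affine recursions with $\|e^{Ah}\|\le 1+ch$. The strategy-dependent terms are identical up to the Step~1 mismatch, and $\mathbb E\|x_k\|^2$ is uniformly bounded thanks to the linear-growth energy constraints (a discrete Gr\"onwall argument). Summing $N\le (T-t_0)/\underline\pi$ local errors of size $\mathcal O(\bar\pi^3)$, propagated through uniformly bounded transition matrices, gives $\max_k|P_k-\tilde P_k|=\mathcal O(\bar\pi^2)$ (assuming $\bar\pi/\underline\pi$ bounded, or directly via $\sum_k h_k^3\le\bar\pi^2(T-t_0)$).

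For general $\psi\in\operatorname{PG}^3$ I would instead telescope. Write $\mathbb E\psi(x_N)-\mathbb E\psi(\tilde x_N)$ as a sum of one-step swaps, each controlled by a second-order Taylor expansion of the continuation function around the common conditional mean $m$. The zeroth- and first-order terms cancel exactly, the second-order term is $\tfrac12\tr(D^2\psi(m)\,\mathrm{Cov\ diff})=\mathcal O(h_k^3)$, and the remainder involves third moments of the increments, which are $\mathcal O(h_k^3)$ times third moments of $u_k,v_k$.

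\textbf{Step 3 (costs).} On each interval the control cost matches exactly, since
\begin{equation*}
\mathbb E[u_k^\top R_1u_k]\,h_k=h_k\big(\tilde\Gamma_1^\top R_1\tilde\Gamma_1+\tr(\tilde\Lambda_1^\top R_1\tilde\Lambda_1)\big),
\end{equation*}
and likewise for $v$. The running state cost $\mathbb E\int_{\Delta}x^\top Qx\,dt$ depends only on the intra-interval second moments. Repeating the Step~1 expansion with $h_k$ replaced by $s\in[0,h_k]$ gives
\begin{equation*}
\mathbb E\big[x(t_k+s)x(t_k+s)^\top\mid x\big]\ \text{vs.}\ \mathbb E\big[\tilde x(t_k+s)\tilde x(t_k+s)^\top\mid x\big]:\qquad s^2\textstyle\sum_iB_i\tilde\Sigma_iB_i^\top\ \text{vs.}\ h_k s\textstyle\sum_iB_i\tilde\Sigma_iB_i^\top,
\end{equation*}
up to $\mathcal O(h_k^3\|x\|^2)$. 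The leading discrepancy $(s^2-h_ks)\sum_iB_i\tilde\Sigma_iB_i^\top$ is only $\mathcal O(h_k^2)$ pointwise; it must be integrated against $Q$ over $s\in[0,h_k]$ to reach $\mathcal O(h_k^3)$ per interval. Here $\int_0^{h_k}(s^2-h_ks)\,ds=-h_k^3/6$, so the mismatch is $\mathcal O(h_k^3)$ and not smaller. Together with the $\mathcal O(\bar\pi^2)$ second-moment error at the grid points, summation gives $|\mathbb E\operatorname{cost}(t_k)-\mathbb E\tilde{\operatorname{cost}}(t_k)|=\mathcal O(\bar\pi^2)$, and the terminal term is a quadratic test function handled by Step~2.

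\textbf{Main obstacle.} I expect the hardest point to be the general test functions in Definition~\ref{def:weak_approximation}, for two reasons. First, the admissible laws only have bounded second moments, so the Taylor remainder and even $\mathbb E\psi(x_k)$ for $\psi$ of higher polynomial growth need not be finite. Second, the feedback maps are merely measurable, so the continuation function $x\mapsto\mathbb E[\psi(\tilde x_N)\mid \tilde x_k=x]$ need not be smooth. I would first try to exploit the LQ structure: every quantity entering $J$ and $\tilde J$ is a quadratic functional, for which Steps~1--3 are exact algebraic recursions. Accordingly, I would prove property (ii) of Definition~\ref{def:SDG_weak_approximation} for the quadratic and affine test functions that determine the cost, and state that extension to $\operatorname{PG}^3$ requires an additional uniform third-moment bound on $\mu_k,\nu_k$.
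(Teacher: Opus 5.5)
\paragraph{The gap: your global step.} You assert that $m_k,P_k$ obey affine recursions whose strategy-dependent terms agree ``up to the Step~1 mismatch.'' Your fallback rests on the claim that every quantity in $J$ and $\tilde J$ is quadratic, so that Steps~1--3 become exact algebraic recursions. For general admissible strategies this is false.

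Write $\bar\Gamma_k(y)=\int u\,\mu_k(du\mid y)$, let $\bar\Sigma_k(y)$ be the conditional covariance, and let $c_k(y)=\int u^\top R_1u\,\mu_k(du\mid y)$. The update of $P_{k+1}$ contains $\mathbb{E}[x_k\bar\Gamma_k(x_k)^\top]$ and $\mathbb{E}[\bar\Gamma_k(x_k)\bar\Gamma_k(x_k)^\top+\bar\Sigma_k(x_k)]$. The interval control cost is $h_k\mathbb{E}[c_k(x(t_k))]$ versus $h_k\mathbb{E}[c_k(\tilde x(t_k))]$. All these maps are only measurable with quadratic growth.

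Step~1 equates them conditionally on a common state, but they are not functions of $(m_k,P_k)$. Comparing their expectations under the laws of $x(t_k)$ and $\tilde x(t_k)$ is itself a weak-approximation estimate for nonsmooth, nonquadratic test functions. That is exactly the obstacle you raised for $\operatorname{PG}^3$, so the LQ structure does not let you bypass it.

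The recursion closes only when $\bar\Gamma_k$ is linear and $\bar\Sigma_k$ is linear in $yy^\top$, as for the strategies of Theorem~\ref{thm:dg-lq-solution}. In that case your bound $\|P_{k+1}-\tilde P_{k+1}\|\le(1+ch_k)\|P_k-\tilde P_k\|+\mathcal{O}(h_k^3)$ gives a clean elementary proof. In general, neither the stated theorem nor your weakened quadratic-test-function version follows. The $\operatorname{PG}^3$ extension also needs regularity of the moment maps, so that the continuation function in your telescoping is $C^3$; a uniform third-moment bound alone is not enough.

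\paragraph{Comparison with the paper.} The paper closes the global step by invoking Milshtein's theorem on top of its one-step Lemma~\ref{lem:one_step_dg}. That theorem tacitly presupposes the same smoothness and higher-moment bounds you flagged. Item (iii) of the lemma uses $\mathbb{E}[(Ax_0+Bz)^{\otimes 3}]$, which Definition~\ref{def:admissible_mixed} does not make finite. Indeed, take $\psi(x)=(1+\|x\|^2)^{3/2}$ and a finite-variance $\mu_0$ with infinite third moment in a direction outside $\ker B_1$; then $\mathbb{E}\psi(x(t_1))=\infty$. So your diagnosis is right, but a complete proof must add hypotheses rather than lean on the quadratic structure.

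Your local analysis is otherwise sound. The paper shows that the $h^3$ covariance terms also cancel, giving a mismatch of $\mathcal{O}(h^4)$ at the grid points, but your $\mathcal{O}(h^3)$ suffices for order $2$. Your Step~3 is in fact more accurate than the paper's item (iv). Because the surrogate diffusion carries the full-interval factor $\delta_\pi^{1/2}$, the intra-interval covariance mismatch is $\mathcal{O}(h_k^2)$. The local cost error is therefore $-\tfrac{h_k^3}{6}\tr(Q\sum_iB_i\tilde\Sigma_iB_i^\top)+\cdots$ rather than $\mathcal{O}(\bar\pi^5)$. This still sums to $\mathcal{O}(\bar\pi^2)$.
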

\begin{proof}
    We certify the $\mathcal{O}(\bar{\pi}^2)$ weak approximation using Milshtein’s framework. First, we augment the state dynamics with the accumulated cost variable to enable joint state-cost approximation. Second, we perform a local one-step error analysis between the exact game increment and the surrogate SDG, proving that their first three moments deviate by at most $\mathcal{O}(\bar{\pi}^3)$. Finally, by aggregating these local deviations over $N$ sampling intervals, we rigorously establish that the global approximation error is bounded by $\mathcal{O}(\bar{\pi}^2)$. Full details are provided in Appendix \ref{app:thm:g_lq_weak_approximation}.
\end{proof}

In our prior works on general nonlinear ZSDGs \cite{xu2026optimalmixedstrategyzerosum}, the weak approximation error was strictly bounded by $\mathcal{O}(\bar{\pi})$ (order $1$). Remarkably, owing to the global linearity of the dynamics and the uncentered moment substitutions in the ZSLQDG framework, the local truncation errors of the third-order moments vanish structurally. This elevates the global weak approximation accuracy to $\mathcal{O}(\bar{\pi}^2)$ (order $2$), providing a significantly tighter characterization of both state distributions and expected costs.

\section{Near-Optimal Mixed Strategies}
In this section, we analytically resolve the surrogate SDG $\mathbf{\tilde{G}}_{lq}$ to extract closed-form optimal pure strategies. By inversely mapping these continuous-time deterministic moments back to the discrete-time probability spaces, we synthesize near-optimal mixed strategies for the original game $\mathbf{G}_{lq}$. Finally, we develop a rigorous performance certification result, systematically bounding the global value approximation error and the strategy suboptimality gaps.

\subsection{Solve $\mathbf{\tilde{G}}_{lq}$}
Solving the surrogate game $\mathbf{\tilde{G}}_{lq}$ directly under the delay-dependent strategy spaces $\tilde{\mathcal{A}}_p^\pi \times \tilde{\mathcal{B}}_p^\pi$ is analytically challenging. To derive closed-form optimal control laws, we analyze the game under the asymptotic continuous-time regime where the commitment delays approach zero, yielding the delay-free admissible spaces $\tilde{\mathcal{A}}_p \triangleq \cup_{\pi}\tilde{\mathcal{A}}_p^\pi$ and $\tilde{\mathcal{B}}_p \triangleq \cup_{\pi}\tilde{\mathcal{B}}_p^\pi$. Since the surrogate linear-quadratic SDG rigorously satisfies the Isaacs condition, its saddle-point game value, denoted as $\tilde{V}_p(t_0,x_0)$, is well-defined over $\tilde{\mathcal{A}}_p\times\tilde{\mathcal{B}}_p$. This condition holds naturally because the Hamiltonian is separable in $\tilde{u}$ and $\tilde{v}$, implying that the minimax equality holds \cite{basarDynamicNoncooperativeGame1998}.

To analytically resolve the surrogate game, we first derive the optimal moments under the continuous-time delay-free spaces $\tilde{\mathcal{A}}_p \times \tilde{\mathcal{B}}_p$. These continuous-time optimal mappings will later be strictly projected or truncated into the piecewise-constant ZOH execution mechanism during practical implementation (Algorithm \ref{alg:g_lq_1}) and ZOH strategy replacement errors (Lemma \ref{lem:smooth_replacement}). 

Next, we introduce a symmetric matrix process $P(t)$ uniquely satisfying the following backward Generalized Riccati Differential Equation (GRDE):
\begin{equation}\label{eq:generalized_riccati}
    \left\{\begin{aligned}
        -\dot{P}(t) = & A^\top P(t) + P(t)A + Q - P(t)B_1 R_1^{-1} B_1^\top P(t) \\
        & + P(t)B_2 \left(R_2 + \lambda_2(t) I\right)^{-1} B_2^\top P(t) + \lambda_2(t)\gamma_2^2 I, \\
        P(T) = & Q_T.
    \end{aligned}\right.
\end{equation}
In this formulation, $\lambda_2(t)$ acts as a dynamic penalty factor tracking the local system divergence, defined strictly as:
\begin{equation}\label{eq:lambda_2}
    \lambda_2(t) \triangleq \max\left(0, \lambda_{\max}(Q_{2,eq}(t))\right).
\end{equation}
Here, the equivalent weight matrices $Q_{i,eq}(t)$ for $i \in \{1,2\}$ are elegantly constructed as:
\begin{equation}\label{eq:equivalent_Q}
    Q_{i,eq}(t) \triangleq \delta_{\pi}(t)B_i^\top P(t)B_i - (-1)^i R_i.
\end{equation}
These equivalent matrices bear a fundamental control-theoretic interpretation that will be mathematically justified in the subsequent optimal synthesis: they explicitly combine the baseline steady-state control costs ($R_i$) with the dynamic value-to-go penalty ($B_i^\top P(t)B_i$) induced by the stochastic diffusion propagating over the delay duration $\delta_\pi(t)$.

With $P(t)$ and the equivalent matrices formally defined, we restrict the game to a well-posed parameter regime where the linear-quadratic properties do not collapse due to extreme hardware limitations.

\begin{assumption}[Sufficient actuation capacity]\label{assump:actuation_capacity}
    We assume the physical energy capacities $\gamma_1$ and $\gamma_2$ are sufficiently large to support the unconstrained optimal feedback gains, satisfying the following matrix inequalities for all $t \in [t_0, T]$:
    \begin{equation}\label{eq:capacity_condition}
        \left\{\begin{aligned}
            \gamma_1^2 I - P(t)B_1 R_1^{-2}B_1^\top P(t) &\succeq \mathbf{0}, \\
            \gamma_2^2 I - P(t)B_2 (R_2 + \lambda_2(t) I)^{-2} B_2^\top P(t) &\succeq \mathbf{0}.
        \end{aligned}\right.
    \end{equation}
\end{assumption}
Condition \eqref{eq:capacity_condition} ensures that the actuators possess adequate power margins relative to the system's dynamic requirements. If violated, the optimal mean controls would inevitably suffer from strict boundary saturation (e.g., $\|\tilde{\Gamma}_i\|^2 = \gamma_i^2\|x\|^2$). Such persistent saturation destroys the global quadratic structure, degenerating the LQ game into a highly nonlinear bounded-control problem. Assumption \ref{assump:actuation_capacity} explicitly bounds the parameter regime where the analytical properties of the LQ paradigm are preserved.

\begin{theorem}[Game value and optimal surrogate strategies]\label{thm:dg-lq-solution}
    Let $v_2(t)$ be the unit principal eigenvector corresponding to $\lambda_{\max}(Q_{2,eq}(t))$. Under Assumption \ref{assump:actuation_capacity}, the game value and the optimal pure strategies of $\mathbf{\tilde{G}}_{lq}$ over $\tilde{\mathcal{A}}_p\times\tilde{\mathcal{B}}_p$ admit exact analytical solutions:
    \begin{enumerate}
        \item[(i)] The saddle-point game value is strictly a global quadratic form:
    \begin{equation*}
        \tilde{V}_p(t,x) = x^\top P(t)x, \quad \forall t\in[t_0,T].
    \end{equation*}
        \item[(ii)] For Player 1 (Minimizer), the optimal strategy is:
    \begin{equation}\label{eq:opt_strategy_1_new}
        \tilde{\Gamma}_1^*(t) = -R_1^{-1}B_1^\top P(t)\tilde{x}(t), \quad \tilde{\Sigma}_1^*(t) = \mathbf{0}.
    \end{equation}
        \item[(iii)] For Player 2 (Maximizer), let $K_2(t) \triangleq (R_2 + \lambda_2(t) I)^{-1} B_2^\top P(t)$. The optimal strategy is:
    \begin{equation}\label{eq:opt_strategy_2_new}
        \left\{\begin{aligned}
             \tilde{\Gamma}_2^*(t) &= K_2(t)\tilde{x}(t),\\
             \tilde{\Sigma}_2^*(t) &= \tilde{x}(t)^\top \left(\gamma_2^2 I - K_2(t)^\top K_2(t)\right) \tilde{x}(t) \cdot v_2(t)v_2(t)^\top \\
             &\quad \cdot \mathbb{I}_{\{\lambda_{\max}(Q_{2,eq}(t)) > 0\}}.
         \end{aligned}\right.
    \end{equation}
    \end{enumerate}
\end{theorem}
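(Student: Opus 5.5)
The plan is a verification argument built on a quadratic ansatz for the value function. The quadratic structure should persist because both the dynamics and the moment constraints of $\mathbf{\tilde{G}}_{lq}$ are homogeneous in $\tilde{x}$. I set $W(t,x)=x^\top P(t)x$ with $P$ the solution of the GRDE \eqref{eq:generalized_riccati}, which the paper takes as given. I then compute the Hamiltonian of the surrogate SDG. The diffusion coefficient $\delta_\pi^{1/2}(t)[B_1\tilde{\Lambda}_1,B_2\tilde{\Lambda}_2]$ contributes $\tfrac12\tr(\cdot\,\nabla^2W\,\cdot)=\delta_\pi(t)\sum_i\tr(\tilde{\Sigma}_i B_i^\top P B_i)$ to the generator. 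Combined with the running costs, the Hamiltonian separates into $H_0(t,x)+H_1(x,\tilde{\Gamma}_1,\tilde{\Sigma}_1)+H_2(x,\tilde{\Gamma}_2,\tilde{\Sigma}_2)$, where
\begin{equation*}
H_1 = 2x^\top PB_1\tilde{\Gamma}_1+\tilde{\Gamma}_1^\top R_1\tilde{\Gamma}_1+\tr(\tilde{\Sigma}_1 Q_{1,eq}),\qquad
H_2 = 2x^\top PB_2\tilde{\Gamma}_2-\tilde{\Gamma}_2^\top R_2\tilde{\Gamma}_2+\tr(\tilde{\Sigma}_2 Q_{2,eq}).
\end{equation*}
This separability gives the Isaacs condition, so upper and lower Hamiltonians coincide and it suffices to solve the two static pointwise problems under the constraints $\|\tilde{\Gamma}_i\|^2+\tr\tilde{\Sigma}_i\le\gamma_i^2\|x\|^2$.

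\textbf{Player 1.} Since $Q_{1,eq}=\delta_\pi B_1^\top PB_1+R_1$, it is positive definite once $P\succeq\mathbf 0$ is checked. I will get $P\succeq\mathbf 0$ from the value interpretation, or by comparison with the Riccati equation in which Player 2 plays $0$. Hence any $\tilde{\Sigma}_1\ne\mathbf0$ strictly increases $H_1$ and also consumes budget, so $\tilde{\Sigma}_1^*=\mathbf0$. The unconstrained minimizer of the remaining quadratic is $\tilde{\Gamma}_1^*=-R_1^{-1}B_1^\top Px$. It is feasible because $\|\tilde{\Gamma}_1^*\|^2=x^\top PB_1R_1^{-2}B_1^\top Px\le\gamma_1^2\|x\|^2$ by the first inequality of \eqref{eq:capacity_condition}. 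The resulting value of $H_1$ is $-x^\top PB_1R_1^{-1}B_1^\top Px$.

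\textbf{Player 2.} I split the maximization into an inner maximization over $\tilde{\Sigma}_2$ with $\tilde{\Gamma}_2$ fixed, followed by an outer maximization over $\tilde{\Gamma}_2$. For fixed $\tilde{\Gamma}_2$, the map $\tilde{\Sigma}_2\mapsto\tr(\tilde{\Sigma}_2Q_{2,eq})$ is linear on the set $\{\tilde{\Sigma}_2\succeq\mathbf0:\tr\tilde{\Sigma}_2\le\gamma_2^2\|x\|^2-\|\tilde{\Gamma}_2\|^2\}$. Its maximum is $\lambda_2(\gamma_2^2\|x\|^2-\|\tilde{\Gamma}_2\|^2)$, with $\lambda_2$ as in \eqref{eq:lambda_2}. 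It is attained at the rank-one matrix $(\gamma_2^2\|x\|^2-\|\tilde{\Gamma}_2\|^2)v_2v_2^\top$ when $\lambda_{\max}(Q_{2,eq})>0$, and at $\mathbf 0$ otherwise. This is exactly the case split carried by the indicator in \eqref{eq:opt_strategy_2_new}.

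Substituting back, the outer problem maximizes $2x^\top PB_2\tilde{\Gamma}_2-\tilde{\Gamma}_2^\top(R_2+\lambda_2I)\tilde{\Gamma}_2+\lambda_2\gamma_2^2\|x\|^2$. This is strictly concave, with maximizer $K_2x$. Admissibility of $K_2x$ follows from the second inequality of \eqref{eq:capacity_condition}, which also makes the residual variance budget $x^\top(\gamma_2^2I-K_2^\top K_2)x$ nonnegative and so yields the stated $\tilde{\Sigma}_2^*$. The value of $H_2$ is $x^\top(PB_2(R_2+\lambda_2I)^{-1}B_2^\top P+\lambda_2\gamma_2^2I)x$. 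Adding $H_0=x^\top(A^\top P+PA+Q)x$ and the Player 1 value reproduces the right-hand side of \eqref{eq:generalized_riccati}. Therefore $W$ solves the HJI equation $\partial_tW+\min\max H=0$ with $W(T,x)=x^\top Q_Tx$.

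\textbf{Verification.} I apply Itô's formula to $W(t,\tilde{x}(t))$ along the state driven by $(\tilde\alpha^*,\tilde\beta)$ for an arbitrary $\tilde\beta$, and along $(\tilde\alpha,\tilde\beta^*)$ for an arbitrary $\tilde\alpha$. Using the pointwise saddle inequalities and taking expectations gives $\tilde{J}(\tilde\alpha^*,\tilde\beta)\le W(t_0,x_0)\le\tilde{J}(\tilde\alpha,\tilde\beta^*)$. The stochastic integrals are true martingales because the constraints bound all feedbacks linearly in $\|\tilde{x}\|$, which gives finite second moments of $\tilde{x}$ by Gronwall. This proves (i)--(iii).

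\textbf{Main obstacle.} The delicate point is the regularity of $P$. Through $\delta_\pi(t)$ and the $\max$ and $\lambda_{\max}$ operations, the right-hand side of the GRDE is only piecewise continuous in $t$ and Lipschitz but not smooth in $P$. So $P$ is merely absolutely continuous, and Itô's formula must be applied to a $W$ that is $C^{1}$ in time only almost everywhere. I would handle this interval by interval on $\pi$: on each $\Delta_k$ the coefficient $\delta_\pi$ is constant and $\lambda_{\max}$ is locally Lipschitz, so Carathéodory solutions suffice and the Itô expansion holds piecewise. I would also note that the rank-one choice of $\tilde{\Sigma}_2^*$ may be non-unique when $\lambda_{\max}(Q_{2,eq})$ is repeated, but this does not affect the value.
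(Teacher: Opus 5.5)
Your proposal is correct. It shares the paper's skeleton: quadratic ansatz $x^\top P(t)x$, a separable Hamiltonian, two pointwise problems, and the GRDE obtained by matching coefficients. The difference is in how you solve the Player 2 subproblem, which is the heart of the theorem.

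\textbf{The paper's route.} It uses Lagrangian duality. It invokes Slater's condition for strong duality and forms $\mathcal{L}_2$ with a multiplier $\lambda$. It then shows the dual function $g(\lambda)=x^\top PB_2(R_2+\lambda I)^{-1}B_2^\top Px+\lambda\gamma_2^2\|x\|^2$ is nondecreasing on $[\lambda_2,\infty)$, using the capacity condition at $\lambda_2$. It concludes $\lambda^*=\lambda_2$ and recovers $\tilde\Sigma_2^*$ from complementary slackness.

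\textbf{Your route.} You maximize over $\tilde\Sigma_2$ first. This uses only the elementary fact that a linear functional on $\{\Sigma\succeq\mathbf 0,\ \tr\Sigma\le b\}$ attains $b\max(0,\lambda_{\max}(Q_{2,eq}))$ at a rank-one extreme point. What remains is a strictly concave quadratic in $\tilde\Gamma_2$, whose unconstrained maximizer $K_2x$ is feasible by \eqref{eq:capacity_condition}.

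\textbf{What each buys.} Your route avoids duality theory altogether and gives the value of $\tilde H_2$ in one line. It also shows directly why $\lambda_2$ appears as a ridge penalty: each unit of mean energy displaces one unit of variance budget, worth $\lambda_2$ at the margin. The paper's route reaches the same formula but, in exchange, labels $\lambda_2$ explicitly as the shadow price of the energy budget. It also makes ``constraint active iff $\lambda_{\max}(Q_{2,eq})>0$'' the organizing principle of the case split.

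\textbf{Where your argument is more complete.} You also cover several points the paper leaves implicit.
\begin{itemize}
\item The paper simply asserts $Q_{1,eq}\succ\mathbf 0$. This needs $P\succeq\mathbf 0$, which your Riccati comparison supplies: $P\succeq P_0\succ\mathbf 0$, where $P_0$ solves the Riccati equation with Player 2 playing $0$.
\item You check feasibility of $\tilde\Gamma_1^*$ via the first capacity inequality, which the paper's proof never explicitly invokes.
\item You add the It\^o verification step that turns an HJI solution into an actual saddle-point inequality $\tilde J(\tilde\alpha^*,\tilde\beta)\le x_0^\top P(t_0)x_0\le\tilde J(\tilde\alpha,\tilde\beta^*)$. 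The paper stops at ``substituting back yields the GRDE.''
\item You address the piecewise-$C^1$ regularity of $P$ induced by $\delta_\pi$ and $\lambda_{\max}$, and the possible non-uniqueness of $v_2$.
\end{itemize}
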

\begin{proof}
    We solve the game by analyzing the HJI equation of the surrogate SDG. By postulating a quadratic value function $\tilde{V}(t, x) = x^\top P(t) x$, the HJI equation decouples into two independent subproblems. We resolve Player 2's maximizer subproblem using Lagrangian duality, establishing that the optimal mean is a linear feedback and the variance must align with the principal eigenvector of the variance weight matrix $Q_{2,eq}$. Player 1's minimizer subproblem is solved via standard minimization. Substituting the optimal controls back into the HJI equation yields the GRDE \eqref{eq:generalized_riccati}, characterizing the evolution of $P(t)$. Full details are provided in Appendix \ref{app:thm:dg-lq-solution}.
\end{proof}
\begin{remark}[Violation of actuation capacity]\label{rmk:capacity_violation}
    Theorem \ref{thm:dg-lq-solution} relies strictly on Assumption \ref{assump:actuation_capacity} to guarantee that the optimal value function retains a globally explicit quadratic structure. In practical applications where the physical energy budget $\gamma_2$ is severely restricted and fails to satisfy condition \eqref{eq:capacity_condition}, the residual matrix $\gamma_2^2 I - K_2(t)^\top K_2(t)$ may lose its positive semi-definiteness, structurally invalidating the unconstrained Riccati derivation. Under this specific scenario, one must resort to numerically resolving the underlying HJI partial differential equation over state grids, as the true value function inevitably transitions from a global quadratic form into a saturated, highly nonlinear structure.
\end{remark}

Theorem \ref{thm:dg-lq-solution} reveals a profound \textit{dynamic energy allocation law}: Player 2 prioritizes their energy budget strictly for the deterministic linear feedback mean $\tilde{\Gamma}_2^*$. Only when the local dynamics approach divergence ($\lambda_{\max}(Q_{2,eq}) > 0$) does Player 2 inject the residual energy budget into the variance along the system's most vulnerable eigendirection $v_2(t)$. 
Furthermore, as the commitment delay $\bar\pi \to 0$, we naturally have $\lambda_2(t) \to 0$. Consequently, the optimal variance vanishes ($\tilde{\Sigma}_{2}^*(t) = \mathbf{0}$), and the GRDE \eqref{eq:generalized_riccati} degenerates to the standard continuous-time Riccati equation, fully recovering classical LQ theory.

\begin{algorithm}[ht]
\renewcommand{\algorithmicrequire}{\textbf{Input:}}
\renewcommand{\algorithmicensure}{\textbf{Output:}}
\caption{Near-Optimal Mixed Strategies for $\mathbf{G}_{lq}$}
\label{alg:g_lq_1}
\begin{algorithmic}[1]
\Require Dynamics $A, B_1, B_2$; costs $Q, R_1, R_2, Q_T$; delay $\pi$; capacities $\gamma_1, \gamma_2$; optional spatial grids $\mathcal{X}$.
\State \(\triangleright\) \textbf{Phase 1: Offline verification \& routing}
\State Solve GRDE \eqref{eq:generalized_riccati} for $P(t)$, $t \in [t_0, T]$.
\If{Assumption \ref{assump:actuation_capacity} holds via \eqref{eq:capacity_condition}}
    \State $\text{Mode} \leftarrow \text{Analytical (LQ)}$
\Else
    \State $\text{Mode} \leftarrow \text{Numerical (HJI Fallback)}$
    \State Solve HJI PDE over grids $\mathcal{X}$ for true value $V(t,x)$.
\EndIf
\State Initialize $x(t_0) = x_0$ and $\operatorname{cost}(t_0) \leftarrow 0$.
\For {$k=0, \ldots, N-1$}
    \State \(\triangleright\) \textbf{Phase 2: Online moment evaluation}
    \If{$\text{Mode} == \text{Analytical (LQ)}$}
        \State Compute $(\Gamma_{1,k}^*, \Sigma_{1,k}^*)$ and $(\Gamma_{2,k}^*, \Sigma_{2,k}^*)$ 
        \Statex \quad\quad via explicit solutions \eqref{eq:opt_strategy_1_new} and \eqref{eq:opt_strategy_2_new}.
    \Else
        \State $V_x \leftarrow \nabla_x V(t_k, x(t_k))$, $V_{xx} \leftarrow \nabla_{xx}^2 V(t_k, x(t_k))$.
        \State $M_{1,k} \leftarrow R_1 + \frac{1}{2}\delta_\pi(t_k) B_1^\top V_{xx} B_1$.
        \State $M_{2,k} \leftarrow \frac{1}{2}\delta_\pi(t_k) B_2^\top V_{xx} B_2 - R_2$.
        
        \State Solve Minimizer's optimization:
        \State $\min_{\Gamma_1, \Sigma_1 \succeq \mathbf{0}} \!\big\{ \Gamma_1^\top R_1 \Gamma_1 + V_x^\top B_1 \Gamma_1 + \tr(M_{1,k} \Sigma_1) \big\}$ 
        \State \quad s.t. $\|\Gamma_1\|^2 + \tr(\Sigma_1) \le \gamma_1^2 \|x(t_k)\|^2$.
        
        \State Solve Maximizer's optimization:
        \State $\max_{\Gamma_2, \Sigma_2 \succeq \mathbf{0}} \!\big\{ \!-\!\Gamma_2^\top R_2 \Gamma_2 + V_x^\top B_2 \Gamma_2 + \tr(M_{2,k} \Sigma_2) \big\}$
        \State \quad s.t. $\|\Gamma_2\|^2 + \tr(\Sigma_2) \le \gamma_2^2 \|x(t_k)\|^2$.
        
        \State Let $(\Gamma_{i,k}^*, \Sigma_{i,k}^*)$ be the resulting optimal moments.
    \EndIf
    \State \(\triangleright\) \textbf{Phase 3: ZOH inter-sample execution}
    \State Set prob. measures: $\mu_k \!=\! \mathcal{N}(\Gamma_{1,k}^*, \Sigma_{1,k}^*)$, $\nu_k \!=\! \mathcal{N}(\Gamma_{2,k}^*, \Sigma_{2,k}^*)$.
    \State Sample independently: $u_k \sim \mu_k$, $v_k \sim \nu_k$. 
    \For {$t \in [t_k, t_{k+1})$}
        \State Evolve dynamics: $\dot{x}(t) = Ax(t) + B_1 u_k + B_2 v_k$.
    \EndFor
    \State $J_{step} \leftarrow \int_{t_k}^{t_{k+1}} h(s, x(s), u_k, v_k) ds$.
    \State $\operatorname{cost}(t_{k+1}) \leftarrow \operatorname{cost}(t_k) + J_{step}$.
\EndFor
\State Add terminal cost: $\operatorname{cost}(T) \leftarrow \operatorname{cost}(T) + x(T)^\top Q_T x(T)$.
\Ensure Trajectories $x(t), u(t), v(t)$, and accumulated $\operatorname{cost}(T)$.
\end{algorithmic}
\end{algorithm}

\subsection{Synthesis of Near-Optimal Strategies}
Theorem \ref{thm:dg-lq-solution} provides explicit analytical solutions for the surrogate game $\mathbf{\tilde{G}}_{lq}$. A major computational advantage of this LQ framework is that the synthesis process relies entirely on solving the GRDE \eqref{eq:generalized_riccati} offline, fundamentally circumventing the curse of dimensionality inherent in general PDEs. 

To seamlessly integrate the analytical GRDE solutions with the strict physical capacity limits addressed in Remark \ref{rmk:capacity_violation}, we propose a robust \textit{dual-routing execution architecture}. As detailed in Algorithm \ref{alg:g_lq_1}, this architecture bridges continuous-time theory with discrete-time digital control practice through three distinct operational phases.

\textbf{Phase 1: Offline verification and routing.} Before online deployment, the system evaluates the mathematical well-posedness of the unconstrained LQ framework. By pre-computing the GRDE \eqref{eq:generalized_riccati} over $[t_0, T]$, we rigorously verify whether the physical actuation capacities $(\gamma_1, \gamma_2)$ are sufficient to support the unconstrained optimal feedback gains (Assumption \ref{assump:actuation_capacity}). If the condition holds, the system is routed to the highly efficient analytical mode. Conversely, if the capacities are severely restricted, the optimal moments will inevitably suffer from boundary saturations, destroying the global quadratic structure. In this scenario, the system triggers the numerical fallback mode, resorting to a grid-based HJI PDE solver to pre-compute the true non-quadratic value function over spatial grids $\mathcal{X}$.

\textbf{Phase 2: Online moment evaluation.} During real-time operation at each sampling instant $t_k$, the players compute their optimal statistical moments based on the observed state $x(t_k)$. In the analytical mode, this requires merely trivial $\mathcal{O}(1)$ matrix-vector multiplications via the explicit closed-form solutions \eqref{eq:opt_strategy_1_new} and \eqref{eq:opt_strategy_2_new}, rendering it extremely computationally efficient. In the numerical fallback mode, the players extract the local gradient and Hessian $(V_x, V_{xx})$ from the pre-computed HJI value function to construct the equivalent local weight matrices $M_{1,k}$ and $M_{2,k}$. The moments are then explicitly extracted by solving localized, state-dependent trace-optimization problems subject to the strict energy boundaries.

\textbf{Phase 3: ZOH inter-sample execution.} Having uniquely determined the optimal means and variances $(\Gamma_{i,k}^*, \Sigma_{i,k}^*)$, the players inversely project these deterministic moments back into the randomized mixed-strategy spaces. Specifically, they construct multivariate Gaussian distributions $\mathcal{N}(\Gamma_{i,k}^*, \Sigma_{i,k}^*)$ and independently draw their control actions $u_k$ and $v_k$. These sampled controls are then applied to the physical system under a Zero-Order Hold (ZOH) mechanism over the commitment interval $[t_k, t_{k+1})$, naturally inducing a macroscopic stochastic diffusion that physically replicates the continuous-time surrogate game dynamics.

\subsection{Performance Certification}
Having synthesized the near-optimal mixed strategies, we now rigorously certify the performance of the proposed methodology by evaluating the bounds on the value approximation error and the suboptimality gaps.

\begin{assumption}[Lipschitz mixed strategies]\label{assump:lipschitz_mixed}
    There exists a pair of saddle-point mixed strategies $(\alpha^*,\beta^*)$ for the original discrete-time game $\mathbf{G}_{lq}$ such that their projected statistical moments $(\phi_1(\alpha^*),\phi_2(\beta^*))$ are uniformly Lipschitz continuous.
\end{assumption}

\begin{remark}[Regularity of optimal mixed strategies]
    Unlike unconstrained pure strategies, the mathematical regularity of the first two moments for optimal mixed strategies has not been extensively explored in existing differential game literature. Furthermore, as revealed in Theorem \ref{thm:dg-lq-solution}, the analytical variance of the surrogate SDG inherently involves indicator functions, implying that the exact optimal mappings may exhibit non-Lipschitz jumps on certain boundaries. Therefore, Assumption \ref{assump:lipschitz_mixed} serves as a pragmatic working hypothesis. It effectively isolates the intrinsic theoretical weak approximation performance of our framework from the profound analytical complexities of non-smooth controls, rendering the suboptimality certification mathematically tractable.
\end{remark}

As established in Theorem \ref{thm:g_lq_weak_approximation}, the surrogate game $\mathbf{\tilde{G}}_{lq}$ under pure spaces $\tilde{\mathcal{A}}_p^\pi\times\tilde{\mathcal{B}}_p^\pi$ is an order $2$ weak approximation of the original game $\mathbf{G}_{lq}$ under mixed spaces $\mathcal{A}_m^\pi\times\mathcal{B}_m^\pi$. Let $(\tilde{\alpha}^*, \tilde{\beta}^*)$ be the optimal pure strategies for $\mathbf{\tilde{G}}_{lq}$ derived in Theorem \ref{thm:dg-lq-solution}. 

We define the continuous-time best responses against the ZOH-executed opponent strategies as $\tilde{\alpha}^{br} = \arg\min_{\tilde{\alpha}\in\tilde{\mathcal{A}}_p^{\pi}} \tilde{J}(t_0, x_0, \tilde{\alpha}, \operatorname{ZOH}_{\pi}[\tilde{\beta}^*])$ and $\tilde{\beta}^{br} = \arg\max_{\tilde{\beta}\in\tilde{\mathcal{B}}_p^{\pi}} \tilde{J}(t_0, x_0, \operatorname{ZOH}_{\pi}[\tilde{\alpha}^*], \tilde{\beta})$.

To uniquely isolate the theoretical weak approximation accuracy from the physical implementation error, we define the following ZOH execution penalties:
\begin{equation*}
    \begin{aligned}
        \epsilon_1 =& |\tilde{J}(t_0,x_0,\tilde{\alpha}^*,\phi_2(\beta^*))-\tilde{J}(t_0,x_0,\operatorname{ZOH}_{\pi}[\tilde{\alpha}^*],\phi_2(\beta^*))|,\\
        \epsilon_2 =& |\tilde{J}(t_0,x_0,\phi_1(\alpha^*),\tilde{\beta}^*)-\tilde{J}(t_0,x_0,\phi_1(\alpha^*),\operatorname{ZOH}_{\pi}[\tilde{\beta}^*])|,\\
        \epsilon_3 =& |\tilde{J}(t_0,x_0,\tilde{\alpha}^*,\tilde{\beta}^*)-\tilde{J}(t_0,x_0,\operatorname{ZOH}_{\pi}[\tilde{\alpha}^*],\operatorname{ZOH}_{\pi}[\tilde{\beta}^*])|,\\
        \epsilon_4 =& |\tilde{J}(t_0, x_0, \tilde{\alpha}^{br}, \operatorname{ZOH}_{\pi}[\tilde{\beta}^*]) - \tilde{J}(t_0, x_0, \tilde{\alpha}^{br}, \tilde{\beta}^*)|,\\
        \epsilon_5 =& |\tilde{J}(t_0, x_0, \operatorname{ZOH}_{\pi}[\tilde{\alpha}^*], \tilde{\beta}^{br}) - \tilde{J}(t_0, x_0, \tilde{\alpha}^*, \tilde{\beta}^{br})|.
    \end{aligned}
\end{equation*}

\begin{lemma}[ZOH strategy replacement errors]\label{lem:smooth_replacement}
    The cost differences caused by replacing the continuous-time strategies with their piecewise-constant ZOH equivalents are bounded by:
    \begin{equation}
        \epsilon_1 = \mathcal{O}(\bar{\pi}), \quad \epsilon_2 = \mathcal{O}(\bar{\pi}^{\frac{1}{2}}), \quad \epsilon_3 = \mathcal{O}(\bar{\pi}^{\frac{1}{2}}).
    \end{equation}
\end{lemma}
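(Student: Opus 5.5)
The plan is to treat each $\epsilon_i$ as the cost difference between two runs of the surrogate SDE $\tilde{\mathbf{G}}_{lq}$ that start from $x_0$ and differ only in how one or both players' moment feedbacks are evaluated. In one run the feedback uses the current state $\tilde x(t)$; in the other it is frozen at the last sampling instant $\tilde x(t_k)$. We write $\tilde x$ for the continuous-feedback trajectory and $\hat x$ for the ZOH trajectory. Both are driven by the same Wiener process, so we can compare them pathwise. The first step is uniform moment bounds. By Assumption \ref{ass:system_matrices}, the GRDE solution $P(t)$ is bounded on $[t_0,T]$, so $\lambda_2(t)$ and $K_2(t)$ are bounded too. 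Because the energy constraint has linear growth, $\|\tilde\Gamma_i\|^2+\tr(\tilde\Sigma_i)\le\gamma_i^2\|x\|^2$, Gronwall plus the Burkholder--Davis--Gundy inequality gives $\mathbb{E}\sup_t\|\tilde x(t)\|^4\le C\|x_0\|^4$ and the same for $\hat x$, uniformly in $\pi$. The same argument gives the local increment estimate $\mathbb{E}\|\hat x(t)-\hat x(t_k)\|^2\le C(t-t_k)^2+C\,\delta_\pi(t)(t-t_k)\le C\bar\pi^2$ for $t\in\Delta_{k+1}$. The factor $\delta_\pi^{1/2}$ in front of $dW_t$ makes the diffusion contribute at order $\bar\pi$ to the second moment, not $\bar\pi^{1/2}$. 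This is the structural reason some of the rates can be first order.

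\textbf{Bound for $\epsilon_1$.} The ZOH is applied only to $\tilde\alpha^*$. By Theorem \ref{thm:dg-lq-solution}, $\tilde\Gamma_1^*=-R_1^{-1}B_1^\top P(t)x$ is linear in $x$ with a $C^1$ gain, and $\tilde\Sigma_1^*=\mathbf 0$. The opponent $\phi_2(\beta^*)$ is Lipschitz by Assumption \ref{assump:lipschitz_mixed}. The discrepancy in the mean is $\tilde\Gamma_1^*(t_k,\hat x(t_k))-\tilde\Gamma_1^*(t,\hat x(t))$. This is an additive drift perturbation, and its second moment is $O(\bar\pi^2)$ by the increment estimate above. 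With $e=\tilde x-\hat x$, we apply It\^o's formula to $\|e\|^2$ and use the Lipschitz property of the opponent's drift and diffusion. Gronwall then yields $\mathbb{E}\sup\|e\|^2=O(\bar\pi^2)$. The cost integrand is quadratic, so its difference is bounded by $(\|\tilde x\|+\|\hat x\|)\|e\|$ plus control terms of the same type. Cauchy--Schwarz with the fourth-moment bounds gives $\epsilon_1=O(\bar\pi)$. For a sharper statement one could instead use the smooth quadratic value $x^\top P x$ and Itô/Dynkin on the difference of generators. The first-order error terms are of the form $\mathbb{E}[(\hat x(t)-\hat x(t_k))^\top(\cdot)]$, and their leading drift part is $O(\bar\pi)$, which is consistent with the claimed rate.

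\textbf{Bounds for $\epsilon_2,\epsilon_3$, and the main obstacle.} Here the ZOH is applied to $\tilde\beta^*$. Its variance $\tilde\Sigma_2^*$ contains the indicator $\mathbb{I}_{\{\lambda_{\max}(Q_{2,eq})>0\}}$ and the eigenprojector $v_2v_2^\top$. The eigenprojector can be discontinuous in $t$ at eigenvalue crossings, and $Q_{2,eq}$ involves $\delta_\pi$, which jumps at partition points. So the Lipschitz argument breaks down in the time variable, and I expect this to be the main difficulty. In the state variable, however, the map $x\mapsto\Lambda_2^*(t,x)$ is Lipschitz: it equals $\sqrt{x^\top M(t)x}\,v_2(t)$ with $M=\gamma_2^2I-K_2^\top K_2\succeq0$ by Assumption \ref{assump:actuation_capacity}, and $x\mapsto\sqrt{x^\top Mx}$ is a seminorm, hence Lipschitz. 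I would therefore split the error into two pieces. The first is the state-freezing error, $\Lambda_2^*(t_k,\hat x(t_k))-\Lambda_2^*(t_k,\hat x(t))$, which is bounded through Lipschitz continuity in $x$ as in $\epsilon_1$. The second is the time-freezing error, $\Lambda_2^*(t_k,\cdot)-\Lambda_2^*(t,\cdot)$. On subintervals where the eigen-structure and the indicator are constant, this is $O(\bar\pi)$. Otherwise I would only use the crude bound $\|\Lambda_2^*\|\le\gamma_2\|x\|$.

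The difference in diffusion coefficients enters the $\mathbb{E}\|e\|^2$ estimate through the Itô isometry multiplied by $\delta_\pi$. The resulting bound has the form $\mathbb{E}\|e\|^2\le C\bar\pi\cdot(\text{something of order one})$, which gives $\mathbb{E}\|e\|=O(\bar\pi^{1/2})$. It is this square-root loss, coming from a non-smooth diffusion coefficient with no cancellation in the first moment, that produces $\epsilon_2=O(\bar\pi^{1/2})$. For $\epsilon_3$, the triangle inequality reduces the problem to the same two mechanisms applied simultaneously. The same Gronwall estimate on $e$, driven by both perturbations, gives $O(\bar\pi^{1/2})$, with the $\epsilon_1$-type contribution being of lower order. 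In every case, the cost differences are converted to the final bounds by the quadratic-growth Cauchy--Schwarz step together with the uniform fourth-moment bounds from the first step.
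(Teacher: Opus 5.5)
Your proposal is correct and follows essentially the same route as the paper. The paper applies It\^o's formula and Gr\"onwall to $\mathbb{E}\|e\|^2$. A Lipschitz, drift-only perturbation gives $\mathcal{O}(\bar\pi^2)$ and hence $\epsilon_1=\mathcal{O}(\bar\pi)$. For $\epsilon_2,\epsilon_3$, an order-one diffusion mismatch multiplied by $\delta_\pi$ gives $\mathcal{O}(\bar\pi)$, and Cauchy--Schwarz on the quadratic cost yields $\mathcal{O}(\bar\pi^{1/2})$.

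Your observation that $x\mapsto\sqrt{x^\top M(t)x}\,v_2(t)$ is Lipschitz, so the irregularity lives only in $t$, mildly sharpens the paper's crude energy-constraint bound. One point needs to be made explicit. The variance running cost $\tr(R_2\tilde\Sigma_2)$ carries no $\delta_\pi$ factor, so the intervals where you fall back on the crude bound must be finitely many. That requires finitely many switches of the indicator and of $v_2v_2^\top$, which then gives an $\mathcal{O}(\bar\pi)$ contribution; the paper asserts finitely many indicator jumps and uses a left-Riemann-sum argument at this step.
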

\begin{proof}
    We bound the replacement errors by analyzing the state tracking error $e(t) = \tilde{x}^d(t) - \tilde{x}^*(t)$ via Itô’s formula and Grönwall’s inequality. For $\epsilon_1$, the Lipschitz continuity of Player 1's optimal strategy ensures an $\mathcal{O}(\bar{\pi})$ convergence rate. For $\epsilon_2$, the indicator-function-dependent variance injection introduces non-Lipschitz jump discontinuities; we decouple this deviation into a quadratic state-mismatch error and a time-discretization error, showing that the non-smooth jumps degrade the tracking performance to $\mathcal{O}(\bar{\pi}^{1/2})$. The bound for $\epsilon_3$ follows directly from the dominance of this non-Lipschitz variance error in the combined strategy replacement.
    Full details are provided in Appendix \ref{app:lem:smooth_replacement}.
\end{proof}

\begin{lemma}[Best response deviation errors]\label{lem:best_response_deviation}
    The cost variations evaluating the continuous-time best responses against the continuous versus ZOH-executed opponent strategies are bounded by:
    \begin{equation}
        \epsilon_4 = \mathcal{O}(\bar{\pi}^{\frac{1}{2}}), \quad \epsilon_5 = \mathcal{O}(\bar{\pi}).
    \end{equation}
\end{lemma}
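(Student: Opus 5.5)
The plan is to reuse the tracking-error machinery from the proof of Lemma \ref{lem:smooth_replacement}, now with the best response held fixed as the ``frozen'' player. For $\epsilon_4$, fix $\tilde{\alpha}^{br}$, which is an admissible Markovian feedback in $\tilde{\mathcal{A}}_p^\pi$. Let $\tilde{x}^d$ be the state under $(\tilde{\alpha}^{br},\operatorname{ZOH}_\pi[\tilde{\beta}^*])$ and $\tilde{x}^c$ the state under $(\tilde{\alpha}^{br},\tilde{\beta}^*)$, both driven by the same Wiener process, and set $e=\tilde{x}^d-\tilde{x}^c$.

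The first step is uniform second and fourth moment bounds on $\tilde{x}^d$ and $\tilde{x}^c$. The energy constraints give $\|\tilde{\Gamma}_i\|^2+\tr(\tilde{\Sigma}_i)\le\gamma_i^2\|\tilde{x}(t_k)\|^2$, so drift and diffusion grow linearly. Itô's formula and Grönwall's inequality then give $\mathbb{E}\sup_t\|\tilde{x}\|^4\le C\|x_0\|^4$, with $C$ independent of $\pi$ and of the chosen admissible strategies.

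The second step is the cost decomposition. The quadratic running and terminal costs give
\[
\epsilon_4\le C\big(\mathbb{E}\sup_t\|e\|^2\big)^{1/2}+\Big|\mathbb{E}\!\int_{t_0}^T\big[\rho_2(\tilde{x}^d(t_{\zeta}))-\rho_2(\tilde{x}^c(t))\big]dt\Big|,
\]
where $\rho_2$ is the control-cost term $-\Gamma^\top R_2\Gamma-\tr(\Lambda^\top R_2\Lambda)$ evaluated at Player 2's optimal moments. The right-hand side splits into a state-mismatch part and a time-discretization part, exactly as for $\epsilon_2$. The ZOH-held and continuous versions of Player 1's control also differ, because $\tilde{\alpha}^{br}$ is evaluated at two different trajectories. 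To keep this under control, I would first characterize $\tilde{\alpha}^{br}$ explicitly. Against a fixed linear-plus-rank-one-variance opponent, Player 1's problem is a stochastic LQ problem with state-multiplicative noise. Its value is quadratic, $x^\top P^{br}(t)x$, with $P^{br}$ solving a Riccati equation whose coefficients are bounded uniformly in $\pi$, so $\tilde{\alpha}^{br}$ is a linear, hence Lipschitz, feedback. Player 1's contribution to $e$ is then handled as in the $\epsilon_1$ argument, giving an $\mathcal{O}(\bar\pi)$ term.

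The main obstacle is Player 2's variance $\tilde{\Sigma}_2^*$, which contains the indicator $\mathbb{I}_{\{\lambda_{\max}(Q_{2,eq}(t))>0\}}$ and the eigenvector $v_2(t)$. These are discontinuous in time, not in state: at most finitely many switching times occur where $\lambda_{\max}(Q_{2,eq})$ crosses zero, because $P$ is smooth on each interval and $\delta_\pi$ is piecewise constant. Away from the switches, the mean $K_2(t)x$ and the diffusion $\big(x^\top(\gamma_2^2I-K_2^\top K_2)x\big)^{1/2}v_2$ are $1/2$-Hölder in $x$ in the worst case, because of the square root; they are Lipschitz only where that quadratic form stays away from zero. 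I would bound the diffusion mismatch with the inequality $|\sqrt a-\sqrt b|^2\le|a-b|$ applied to the Itô isometry term. This gives
\[
\mathbb{E}\|e(t)\|^2\le C\int_{t_0}^t\Big(\mathbb{E}\|e\|^2+\mathbb{E}\|\tilde{x}(s)-\tilde{x}(t_{\zeta(s)})\|\,\|\tilde{x}\|\Big)ds+C\bar\pi .
\]
The last $C\bar\pi$ covers the switching intervals, which have total length $\mathcal{O}(\bar\pi)$. Since $\mathbb{E}\|\tilde{x}(s)-\tilde{x}(t_{\zeta})\|^2=\mathcal{O}(\bar\pi)$, Cauchy–Schwarz makes the integrand $\mathcal{O}(\bar\pi^{1/2})$. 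Grönwall then yields $\mathbb{E}\sup\|e\|^2=\mathcal{O}(\bar\pi^{1/2})$, and the cost difference is $\mathcal{O}(\bar\pi^{1/2})$. The time-discretization part of $\rho_2$ is bounded in the same way, using that $\rho_2$ is quadratic with bounded coefficients, and is $\mathcal{O}(\bar\pi^{1/2})$. Together these give $\epsilon_4=\mathcal{O}(\bar\pi^{1/2})$.

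For $\epsilon_5$, the roles are reversed: the ZOH-executed strategy is $\tilde{\alpha}^*=-R_1^{-1}B_1^\top P(t)x$, with $\tilde{\Sigma}_1^*=\mathbf{0}$. This strategy is linear with a $C^1$-in-time gain, so there is no diffusion mismatch from Player 1. The best response $\tilde{\beta}^{br}$ is applied identically in both runs. Its moments satisfy the energy bound and, by the same quadratic-value argument, are Lipschitz in the state, with a bounded variance gain. With every coefficient Lipschitz, the argument for $\epsilon_1$ in Lemma \ref{lem:smooth_replacement} carries over verbatim. The state-mismatch term gives $\mathbb{E}\sup\|e\|^2=\mathcal{O}(\bar\pi^2)$, and the mean increment over one interval is $\mathcal{O}(\bar\pi)$, so the cost error is $\mathcal{O}(\bar\pi)$. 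If Lipschitz regularity of $\tilde{\beta}^{br}$ turns out to fail because the maximizer's best response itself saturates with an indicator, then I expect the crude bound to degrade to $\mathcal{O}(\bar\pi^{1/2})$. In that case I would try exploiting the fact that, with Player 1's strategy deterministic, the error $e$ has no martingale part, so only drift terms enter.
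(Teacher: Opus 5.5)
There is a genuine gap in your $\epsilon_4$ argument: you dropped the factor $\delta_\pi(t)$ that the surrogate diffusion carries. In $\tilde{\mathbf{G}}_{lq}$ the noise enters as $\delta_\pi^{1/2}(t)[B_1\tilde\Lambda_1,B_2\tilde\Lambda_2]\,dW_t$. The It\^o correction in $d\|e\|^2$ is therefore $\delta_\pi(t)\,\|B_2(\tilde\Lambda_2^d-\tilde\Lambda_2^*)\|_F^2\,dt$, with $\delta_\pi\le\bar\pi$. Your integral inequality has no such factor, either in front of the H\"older term or in the $C\bar\pi$ switching term.

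Without it, your chain does not reach the claimed rate. You conclude $\mathbb{E}\sup\|e\|^2=\mathcal{O}(\bar\pi^{1/2})$. Your own Step~2 bound $\epsilon_4\le C(\mathbb{E}\sup\|e\|^2)^{1/2}+\dots$ then gives only $\epsilon_4=\mathcal{O}(\bar\pi^{1/4})$. The state-mismatch part of $\rho_2$ behaves the same way, since it is quadratic in the state and also scales like $(\mathbb{E}\|e\|^2)^{1/2}$. So as written the proposal does not prove $\epsilon_4=\mathcal{O}(\bar\pi^{1/2})$. A related symptom is your estimate $\mathbb{E}\|\tilde x(s)-\tilde x(t_{\zeta(s)})\|^2=\mathcal{O}(\bar\pi)$. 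With the $\delta_\pi^{1/2}$ scaling this increment is actually $\mathcal{O}(\bar\pi^2)$.

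Once the factor is restored, the fix is simpler than your route and is what the paper does: no state regularity of $\tilde\Lambda_2^*$ is needed. The energy constraint gives
\[
\|\tilde\Lambda_2^d-\tilde\Lambda_2^*\|_F^2\le 2\big(\tr\tilde\Sigma_2^d+\tr\tilde\Sigma_2^*\big)\le 2\gamma_2^2\big(\|\tilde x^d(t_{k-1})\|^2+\|\tilde x^*(t)\|^2\big),
\]
whose expectation is uniformly bounded. The diffusion mismatch therefore adds only $\mathcal{O}(\bar\pi)$ to $\frac{d}{dt}\mathbb{E}\|e\|^2$. Together with the Lipschitz drift terms, Gr\"onwall gives $\mathbb{E}\|e\|^2=\mathcal{O}(\bar\pi)$, and Cauchy--Schwarz then gives $\epsilon_4=\mathcal{O}(\bar\pi^{1/2})$.

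Two side remarks.
\begin{itemize}
\item Your ``main obstacle'' is partly illusory. Under Assumption~\ref{assump:actuation_capacity} you can write $\gamma_2^2I-K_2^\top K_2=L^\top L$. Then $\sqrt{x^\top L^\top L x}=\|Lx\|$ is globally Lipschitz in $x$ with constant at most $\gamma_2$. So the only genuine irregularity of $\tilde\Sigma_2^*$ is in time (the indicator and $v_2(t)$), not in state.
\item The same $\delta_\pi$ factor also addresses your fallback worry for $\epsilon_5$. Even a H\"older-type mismatch $\delta_\pi\,\mathbb{E}\|\tilde\Lambda_2^{br}(\tilde x^d)-\tilde\Lambda_2^{br}(\tilde x^*)\|_F^2\le C\bar\pi\,\mathbb{E}[\|e\|\,\|\tilde x\|]\le\tfrac12\mathbb{E}\|e\|^2+C\bar\pi^2$ keeps $\mathbb{E}\|e\|^2=\mathcal{O}(\bar\pi^2)$, and hence $\epsilon_5=\mathcal{O}(\bar\pi)$.
\end{itemize}
Otherwise your $\epsilon_5$ plan matches the paper's: zero variance for Player~1, a Lipschitz best response, Gr\"onwall, then Cauchy--Schwarz.
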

\begin{proof}
    We bound the best-response deviation by analyzing the state tracking error between the ideal continuous-time response and the ZOH-constrained implementation. For $\epsilon_4$, the opponent’s strategy inherits non-Lipschitz variance jumps (from Theorem \ref{thm:dg-lq-solution}), forcing a conservative Cauchy-Schwarz bound on the diffusion error, which yields $\mathcal{O}(\bar{\pi}^{1/2})$ convergence. Conversely, for $\epsilon_5$, Player 1's optimal strategy is zero-variance and globally Lipschitz, eliminating the diffusion discontinuities. This structure allows for a tighter $\mathcal{O}(\bar{\pi})$ bound derived via Grönwall’s inequality. Full details are provided in Appendix \ref{app:lem:best_response_deviation}.
\end{proof}

Building on these bounded execution penalties, we formally establish the global value approximation error and the strategy suboptimality gaps.

\begin{theorem}[Value approximation error]\label{thm:lq_value_approximation}
    Under Assumption \ref{assump:lipschitz_mixed}, the analytical game value $\tilde{V}_p(t_0,x_0)$ of $\mathbf{\tilde{G}}_{lq}$ approximates the true game value $V_m^\pi(t_0,x_0)$ of $\mathbf{G}_{lq}$ with an error bounded by:
    \begin{equation}
        |V_m^\pi(t_0,x_0)-\tilde{V}_p(t_0,x_0)| = \mathcal{O}(\bar\pi^{\frac{1}{2}}).
    \end{equation}
\end{theorem}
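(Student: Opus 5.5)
We prove the two one-sided bounds $V_m^\pi(t_0,x_0)\le \tilde V_p(t_0,x_0)+\mathcal{O}(\bar\pi^{1/2})$ and $V_m^\pi(t_0,x_0)\ge \tilde V_p(t_0,x_0)-\mathcal{O}(\bar\pi^{1/2})$ by a sandwich argument. It combines three ingredients: the saddle-point property of $(\alpha^*,\beta^*)$ in $\mathbf{G}_{lq}$ (Theorem \ref{thm:SE} together with Assumption \ref{assump:lipschitz_mixed}), the saddle-point property of $(\tilde\alpha^*,\tilde\beta^*)$ in $\mathbf{\tilde{G}}_{lq}$ over the delay-free spaces (Theorem \ref{thm:dg-lq-solution}), and the order-2 weak approximation of Theorem \ref{thm:g_lq_weak_approximation}. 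The last gives $|J(t_0,x_0,\alpha,\beta)-\tilde J(t_0,x_0,\phi_1(\alpha),\phi_2(\beta))|=\mathcal{O}(\bar\pi^2)$ for any admissible pair, via part (iii) of Definition \ref{def:SDG_weak_approximation} at $k=|\pi|$. The ZOH errors $\epsilon_1,\epsilon_2$ from Lemma \ref{lem:smooth_replacement} bridge the two games.

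\textbf{Upper bound.} Let $\alpha^d$ be the mixed strategy obtained by Gaussian inverse projection of $\operatorname{ZOH}_\pi[\tilde\alpha^*]$ (Algorithm \ref{alg:g_lq_1}, Phase 3). By construction, $\phi_1(\alpha^d)=\operatorname{ZOH}_\pi[\tilde\alpha^*]$; feasibility follows from Assumption \ref{assump:actuation_capacity}. Then
\[
V_m^\pi\le J(\alpha^d,\beta^*)\le \tilde J(\operatorname{ZOH}_\pi[\tilde\alpha^*],\phi_2(\beta^*))+\mathcal{O}(\bar\pi^2)\le \tilde J(\tilde\alpha^*,\phi_2(\beta^*))+\epsilon_1+\mathcal{O}(\bar\pi^2).
\]
The first inequality holds because $\beta^*$ is a best response to any $\alpha$ at the saddle point. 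Since $\phi_2(\beta^*)\in\tilde{\mathcal{B}}_p^\pi\subset\tilde{\mathcal{B}}_p$, the saddle property of $\tilde\alpha^*$ gives $\tilde J(\tilde\alpha^*,\phi_2(\beta^*))\le\tilde V_p$. This yields $V_m^\pi\le\tilde V_p+\mathcal{O}(\bar\pi)$.

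\textbf{Lower bound.} Symmetrically, let $\beta^d$ be the Gaussian projection of $\operatorname{ZOH}_\pi[\tilde\beta^*]$. Then
\[
V_m^\pi\ge J(\alpha^*,\beta^d)\ge\tilde J(\phi_1(\alpha^*),\operatorname{ZOH}_\pi[\tilde\beta^*])-\mathcal{O}(\bar\pi^2)\ge\tilde J(\phi_1(\alpha^*),\tilde\beta^*)-\epsilon_2-\mathcal{O}(\bar\pi^2)\ge\tilde V_p-\mathcal{O}(\bar\pi^{1/2}).
\]
Here Assumption \ref{assump:lipschitz_mixed} is what allows invoking Lemma \ref{lem:smooth_replacement} for $\epsilon_2$ with the opponent fixed at $\phi_1(\alpha^*)$. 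It also guarantees that $\phi_1(\alpha^*)$ generates a well-posed SDE, so it is a legitimate competitor in $\tilde{\mathcal{A}}_p$.

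Combining the two bounds gives $|V_m^\pi-\tilde V_p|\le\max\{\epsilon_1,\epsilon_2\}+\mathcal{O}(\bar\pi^2)=\mathcal{O}(\bar\pi^{1/2})$.

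\textbf{Main obstacle.} The delicate step is the consistency of the strategy spaces. We must check that the Gaussian-projected ZOH strategies are admissible in $\mathcal{A}_m^\pi$ and $\mathcal{B}_m^\pi$, in particular that they satisfy the energy constraint evaluated at the sampled state, and that their moment projections coincide exactly with $\operatorname{ZOH}_\pi[\cdot]$. We must also justify comparing delay-dependent strategies against the delay-free saddle point $\tilde V_p$. For the latter we would rely on the inclusions $\tilde{\mathcal{A}}_p^\pi\subset\tilde{\mathcal{A}}_p$ and $\tilde{\mathcal{B}}_p^\pi\subset\tilde{\mathcal{B}}_p$, and on the fact that the weak-approximation constant in Theorem \ref{thm:g_lq_weak_approximation} is uniform over the relevant strategies, which holds because the moment bounds are quadratic in the state.
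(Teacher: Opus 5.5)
Your sandwich argument is correct and is the chain that the paper's one-line proof imports from the cited certification theorem of the prior work. You reach the same intermediate bound $\max\{\epsilon_1,\epsilon_2\}+\mathcal{O}(\bar\pi^2)$ before invoking Lemma \ref{lem:smooth_replacement}, simply making that chain explicit. One small wording fix: $V_m^\pi\le J(\alpha^d,\beta^*)$ holds because $\alpha^*$ is a best response to $\beta^*$ (the saddle inequality $J(\alpha^*,\beta^*)\le J(\alpha,\beta^*)$ for all $\alpha$), not because $\beta^*$ is a best response to every $\alpha$.
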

\begin{proof}
    According to the general SDG value certification framework established in our previous work \cite[Theorem 2]{xu2026optimalmixedstrategyzerosum}, the value approximation error is structurally bounded by the sum of the weak approximation error and the dominant execution penalty: $|V_m^\pi-\tilde{V}_p| \le \max\{\epsilon_1, \epsilon_2\} + \mathcal{O}(\bar{\pi}^n)$. 
    By Theorem \ref{thm:g_lq_weak_approximation}, the ZSLQDG weak approximation inherently guarantees an order $n=2$ accuracy, contributing an $\mathcal{O}(\bar{\pi}^2)$ error. Concurrently, Lemma \ref{lem:smooth_replacement} bounds the dominant execution penalty at $\epsilon_2 = \mathcal{O}(\bar{\pi}^{\frac{1}{2}})$. Summing these composite errors yields $\mathcal{O}(\bar{\pi}^{\frac{1}{2}}) + \mathcal{O}(\bar{\pi}^2) = \mathcal{O}(\bar{\pi}^{\frac{1}{2}})$. The proof is completed.
\end{proof}

\begin{theorem}[Strategy suboptimality gap]\label{thm:lq_strategy_suboptimality}
    The near-optimal mixed strategies $(\alpha^d, \beta^d)$ synthesized via Algorithm \ref{alg:g_lq_1} possess bounded suboptimality gaps against the worst-case responses:
    \begin{equation}
        \begin{aligned}
            J(t_0,x_0,\alpha^d,\beta^d) - \min_{\alpha\in\mathcal{A}_m^{\pi}}J(t_0, x_0, \alpha, \beta^d) &\leq \mathcal{O}(\bar{\pi}^{\frac{1}{2}}),\\
            \max_{\beta\in\mathcal{B}_m^{\pi}}J(t_0,x_0,\alpha^d,\beta)-J(t_0, x_0, \alpha^d, \beta^d) &\leq \mathcal{O}(\bar{\pi}^{\frac{1}{2}}).
        \end{aligned}
    \end{equation}
\end{theorem}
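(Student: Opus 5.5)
The plan is to follow the same template as the proof of Theorem~\ref{thm:lq_value_approximation}: transfer every quantity from the original game $\mathbf{G}_{lq}$ to the surrogate $\mathbf{\tilde{G}}_{lq}$ using Theorem~\ref{thm:g_lq_weak_approximation}, then absorb the ZOH implementation mismatches with Lemmas~\ref{lem:smooth_replacement} and~\ref{lem:best_response_deviation}. The synthesized mixed strategies are Gaussians whose moments are exactly the ZOH-executed optimal moments. Hence $\phi_1(\alpha^d)=\operatorname{ZOH}_\pi[\tilde\alpha^*]$ and $\phi_2(\beta^d)=\operatorname{ZOH}_\pi[\tilde\beta^*]$, and both are admissible by Assumption~\ref{assump:actuation_capacity}. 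Theorem~\ref{thm:g_lq_weak_approximation}, applied to any pair, then gives
\[
|J(t_0,x_0,\alpha,\beta)-\tilde J(t_0,x_0,\phi_1(\alpha),\phi_2(\beta))|=\mathcal{O}(\bar\pi^2).
\]

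I treat the first inequality in detail; the second is symmetric with the roles of $\epsilon_4$ and $\epsilon_5$ swapped. Let $\alpha$ be any admissible response to $\beta^d$ (or an $\mathcal{O}(\bar\pi^2)$-minimizer if the minimum is not attained).

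\textbf{Upper bound on the candidate cost.} Using the display above,
\[
J(\alpha^d,\beta^d)=\tilde J(\operatorname{ZOH}_\pi[\tilde\alpha^*],\operatorname{ZOH}_\pi[\tilde\beta^*])+\mathcal{O}(\bar\pi^2).
\]
By the definition of $\epsilon_3$ and Lemma~\ref{lem:smooth_replacement}, the right-hand side is at most $\tilde J(\tilde\alpha^*,\tilde\beta^*)+\mathcal{O}(\bar\pi^{1/2})=\tilde V_p(t_0,x_0)+\mathcal{O}(\bar\pi^{1/2})$.

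\textbf{Lower bound on the deviation cost.} Again by the display,
\[
J(\alpha,\beta^d)=\tilde J(\phi_1(\alpha),\operatorname{ZOH}_\pi[\tilde\beta^*])+\mathcal{O}(\bar\pi^2).
\]
Since $\phi_1(\alpha)\in\tilde{\mathcal A}^\pi_p$, the definition of $\tilde\alpha^{br}$ gives
\[
\tilde J(\phi_1(\alpha),\operatorname{ZOH}_\pi[\tilde\beta^*])\ge\tilde J(\tilde\alpha^{br},\operatorname{ZOH}_\pi[\tilde\beta^*]).
\]
By $\epsilon_4$ and Lemma~\ref{lem:best_response_deviation}, this is at least $\tilde J(\tilde\alpha^{br},\tilde\beta^*)-\mathcal{O}(\bar\pi^{1/2})$. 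Next, $\tilde\alpha^{br}\in\tilde{\mathcal A}_p^\pi\subset\tilde{\mathcal A}_p$ and $\tilde\beta^*$ is Player~2's saddle strategy on the delay-free spaces (Theorem~\ref{thm:dg-lq-solution}), so $\tilde J(\tilde\alpha^{br},\tilde\beta^*)\ge\tilde V_p(t_0,x_0)$. Subtracting the two bounds yields the first gap estimate of order $\mathcal{O}(\bar\pi^{1/2})+\mathcal{O}(\bar\pi^2)$.

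\textbf{Second inequality.} For any $\beta$, the same chain runs with $\tilde\beta^{br}$ and $\epsilon_5=\mathcal{O}(\bar\pi)$ from Lemma~\ref{lem:best_response_deviation}, together with the saddle inequality $\tilde J(\tilde\alpha^*,\tilde\beta)\le\tilde V_p$. The term $J(\alpha^d,\beta^d)$ is still controlled from below through $\epsilon_3=\mathcal{O}(\bar\pi^{1/2})$, so the overall rate remains $\mathcal{O}(\bar\pi^{1/2})$.

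\textbf{Main obstacle.} The delicate step is uniformity of the weak-approximation constant over the arbitrary deviating strategy $\alpha$ (resp.\ $\beta$). The constant $C$ in Theorem~\ref{thm:g_lq_weak_approximation} depends on moment bounds of the state. I would argue these bounds are uniform over all admissible strategies because of the energy constraints \eqref{eq:energy_constraint_1}--\eqref{eq:energy_constraint_2}: linear growth in $\|x\|$ makes the second and fourth moments of $x(t)$ Grönwall-bounded independently of the chosen strategy. The local third-moment error analysis in that proof then carries a strategy-independent constant.

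A secondary issue is whether $\tilde\alpha^{br}$ is attained. If it is not, I would replace it by an $\bar\pi^2$-optimal response, which leaves every rate unchanged. Note that Assumption~\ref{assump:lipschitz_mixed} is not needed here, because the argument never compares to the unknown saddle point $(\alpha^*,\beta^*)$.
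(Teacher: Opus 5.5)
Your proof is essentially the paper's: the paper imports from its prior general framework the gap bounds $\epsilon_3+\epsilon_4+\mathcal{O}(\bar\pi^2)$ for Player 1 and $\epsilon_3+\epsilon_5+\mathcal{O}(\bar\pi^2)$ for Player 2, then substitutes Lemmas~\ref{lem:smooth_replacement} and~\ref{lem:best_response_deviation}, and your chain (weak approximation, then $\epsilon_3$, optimality of $\tilde\alpha^{br}$ or $\tilde\beta^{br}$, $\epsilon_4$ or $\epsilon_5$, and the saddle property of $(\tilde\alpha^*,\tilde\beta^*)$) is exactly the derivation of those bounds. One caveat on your uniformity paragraph, which addresses a point the paper never raises: \eqref{eq:energy_constraint_1}--\eqref{eq:energy_constraint_2} constrain only second moments, so admissible heavy-tailed measures can have infinite third or fourth moments, and the strategy-independent higher-moment bounds you claim therefore do not follow from the energy constraints.
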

\begin{proof}
    Following the suboptimality certification framework derived in \cite[Theorem 3]{xu2026optimalmixedstrategyzerosum}, the performance gaps against the best responses are strictly upper bounded by the algebraic sum of the weak approximation error and the respective execution penalties. Specifically, the gap for Player 1 is bounded by $\epsilon_3 + \epsilon_4 + \mathcal{O}(\bar{\pi}^n)$, and for Player 2 by $\epsilon_3 + \epsilon_5 + \mathcal{O}(\bar{\pi}^n)$. 
    Substituting $n=2$ from Theorem \ref{thm:g_lq_weak_approximation} and incorporating the $\mathcal{O}(\bar{\pi}^{\frac{1}{2}})$ penalty bounds from Lemmas \ref{lem:smooth_replacement} and \ref{lem:best_response_deviation}, the lower-order execution penalty $\mathcal{O}(\bar{\pi}^{\frac{1}{2}})$ mathematically dominates the higher-order $\mathcal{O}(\bar{\pi}^2)$ theoretical gap in both dimensions. The suboptimality bounds are thus established.
\end{proof}

\section{Experiment}
In this section, we investigate a double-integrator pursuit-evasion problem in a 2-D plane. We cast the problem as a 4-dimensional ZSLQDG, which serves as a canonical pursuit-evasion benchmark in differential game literature. This model is chosen for its ability to represent the fundamental trade-off between acceleration-based control and state trajectory regulation, making it a classic representative of the broader class of linear-quadratic differential games.
After specifying the parameters for the system dynamics, cost functional, and admissible mixed strategy spaces, we numerically simulate the game under the proposed dual-routing near-optimal mixed strategies synthesized via Algorithm \ref{alg:g_lq_1}. 
Given varying commitment delay patterns, our empirical study serves a three-fold purpose: i) to investigate the physical influence of mixed strategies on the game states, control inputs, and accumulated costs; ii) to verify that the designed SDG under pure strategies strictly guarantees a weak approximation of the original game in the sense of both state distributions and expected costs; and iii) to evaluate the strategy suboptimality gap by pitting the proposed mixed strategies against continuous-time best responses (BR).

\subsection{2-D Double-Integrator Pursuit-Evasion Game}
Let the pursuer's state be $\left(x_p, v_p\right) \in \mathbb{R}^2 \times \mathbb{R}^2$ and the evader's state $\left(x_e, v_e\right)\in \mathbb{R}^2 \times \mathbb{R}^2$. Under planar double-integrator kinematics for time $t\in[t_0, T]$, the dynamics evolve as
\begin{equation}
    \left\{\begin{array}{rl}
         \dot{x}_i(t)=v_i(t),& x_i(t_0) = x_{i,0}\\ 
         \dot{v}_i(t)=a_i(t),& v_i(t_0) = v_{i,0}
    \end{array}\right.
\end{equation}
where $i\in\{p,e\}$ represents the pursuer or the evader, and $x_i, v_i, a_i\in \mathbb{R}^2$ denote position, velocity, and acceleration, respectively. By defining the relative state vector
\begin{equation}
    z(t)=\left[\begin{array}{l}
x_r(t) \\
v_r(t)
\end{array}\right]=\left[\begin{array}{l}
x_p(t)-x_e(t) \\
v_p(t)-v_e(t)
\end{array}\right] \in \mathbb{R}^4,
\end{equation}
we formulate the pursuit-evasion scenario as a $4$-dimensional ZSLQDG:
\begin{equation*}
    (\mathbf{G}_{pe})\!:\!\left\{\!\begin{aligned}
        &\dot{z}(t) = \underbrace{\left[\begin{array}{cc}
        \mathbf{0}_{2\times 2} & \mathbf{I}_2 \\
        \mathbf{0}_{2\times 2}  & \mathbf{0}_{2\times 2} 
        \end{array}\right]}_A z(t)+\underbrace{\left[\begin{array}{c}
        \mathbf{0}_{2\times 2}  \\
        \mathbf{I}_2
        \end{array}\right]}_{B_1} \underbrace{a_p(t)}_{u(t)}\\
        &\qquad\quad+\underbrace{\left[\begin{array}{c}
        \mathbf{0}_{2\times 2}  \\
        -\mathbf{I}_2
        \end{array}\right]}_{B_2} \underbrace{a_e(t)}_{v(t)}, \; z(t_0) = \underbrace{\begin{bmatrix}
            x_r(t_0)\\ 
            v_r(t_0)
        \end{bmatrix}}_{z_0},\\
        &J(t_0,z_0,u,v) = \mathbb{E}\int_{t_0}^T        \left[z^{\top}(t)Qz(t)+u^{\top}(t)R_1u(t)\right.\\
        &\qquad\qquad\qquad\left.- v^{\top}(t)R_2v(t)\right] dt + z^{\top}(T)Q_Tz(T),
    \end{aligned}\right.
\end{equation*}
where the expectation is taken over the underlying probability space induced by the mixed strategies. The matrix $Q\in\mathbb{R}^{4\times 4}$ penalizes the relative separation and closing-rate errors, $R_1, R_2\in\mathbb{R}^{2\times 2}$ penalize the pursuer's and evader's control efforts respectively, and $Q_T\in\mathbb{R}^{4\times 4}$ enforces the terminal-state penalty. The pursuer aims to minimize $J$ while the evader aims to maximize $J$, both adopting strategies from the admissible mixed strategy spaces in Definition \ref{def:admissible_mixed}. 

\subsection{Experiment Setup}
\paragraph*{Dynamics and Costs}
For the linear dynamics of $\mathbf{G}_{pe}$, we initialize the system at $z_0 = [1,1,0.6,0]^\top$, $t_0 = 0$, with a total horizon $T=10$. For the quadratic cost functional, the weighting matrices are configured as $Q=\mathbf{I}_4$, $R_1=0.1\mathbf{I}_2$, $R_2=0.1\mathbf{I}_2$, and $Q_T=\mathbf{I}_4$. Notably, assigning a relatively small control penalty $R_2$ deliberately intensifies the conflict, effectively incentivizing the evader to activate variance-injection behaviors (i.e., mixed strategies) to exploit the commitment delay.

\paragraph*{Admissible Mixed Strategies}
We investigate equispaced commitment delay partitions on $[t_0,T]$, such that the interval $\pi$ satisfies $\bar{\pi} = \underline{\pi}$, and $\bar\pi$ is evaluated across $\{0.06, 0.08, 0.1\}$. For the energy constraint, the physical capacities are conservatively bounded by $\gamma_1 = 30$ and $\gamma_2=5$. 

\paragraph*{Numerical Methods}
We employ the explicit Runge-Kutta method of order 5(4) to simulate the continuous dynamics of $\mathbf{G}_{pe}$, and the Euler-Maruyama method to discretize the stochastic dynamics of $\mathbf{\tilde{G}}_{pe}$, both operating at a fine time step of $\Delta t = 10^{-3}$. To accurately capture the macroscopic probability distributions of the state trajectories and the accumulated costs, $100$ independent Monte Carlo sample paths are simulated concurrently for each $\bar\pi$ configuration. 

Crucially, to verify the suboptimality gaps, we simulate the game under an asymmetric information structure. While the evaluated player is strictly restricted to the ZOH mixed strategy, the opponent is permitted to continuously observe the state and execute a continuous-time optimal feedback law (acting as the best response). This adversarial proxy mathematically guarantees strict upper and lower empirical bounds for the accumulated costs.

\subsection{Results and Analysis}

\subsubsection{Influence of Mixed Strategies}
\begin{figure}[h]
    \centering
    \subfigure[Relative position trajectory (Phase portrait).]{
        \includegraphics[width=0.46\linewidth]{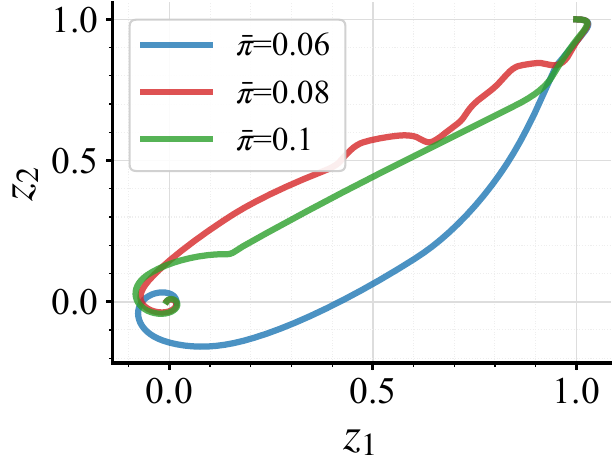}
        \label{fig:indi_state}
    }
    \subfigure[Accumulated cost (solid curve) and the approximated game value $\tilde{V}_p(t_0,z_0)$ (dotted line).]{
        \includegraphics[width=0.46\linewidth]{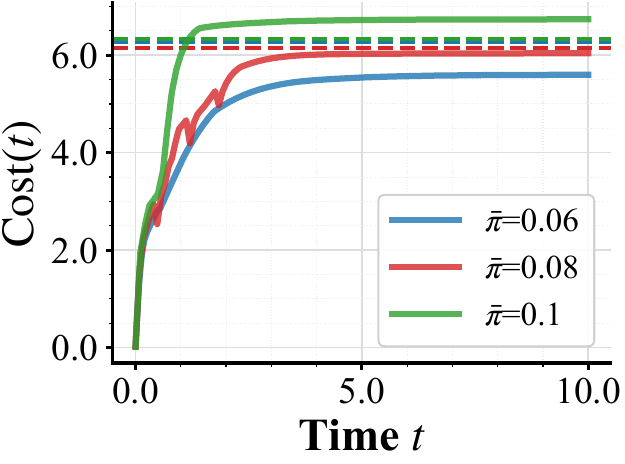}
        \label{fig:indi_cost}
    }
    \subfigure[Evader's control, dimension $1$.]{
        \includegraphics[width=0.46\linewidth]{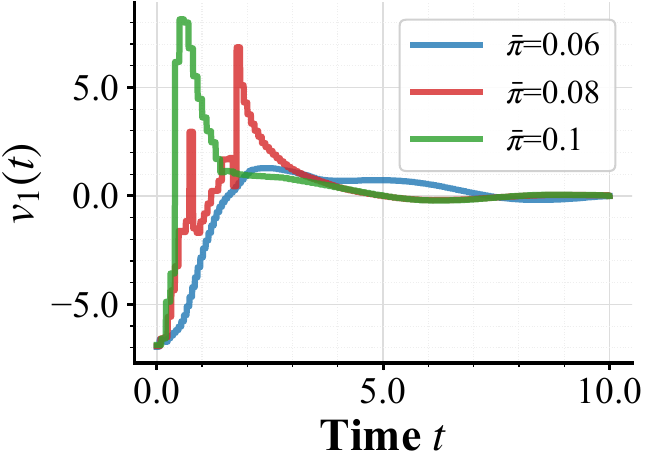}
        \label{fig:indi_control_1}
    }
    \subfigure[Evader's control, dimension $2$.]{
        \includegraphics[width=0.46\linewidth]{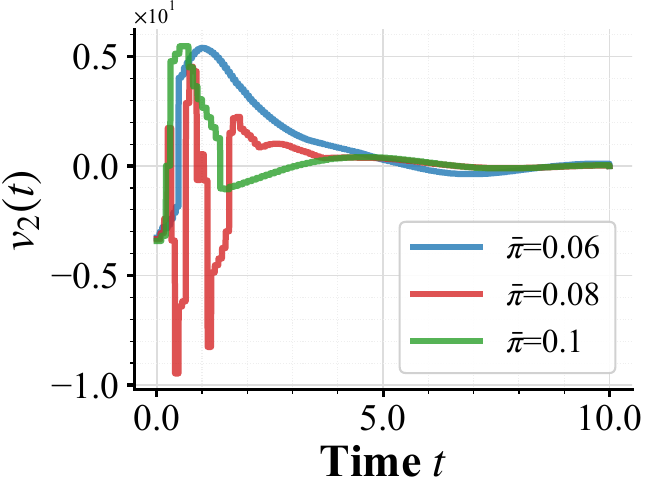}
        \label{fig:indi_control_2}
    }
    \subfigure[Pursuer's control, dimension $1$.]{
        \includegraphics[width=0.46\linewidth]{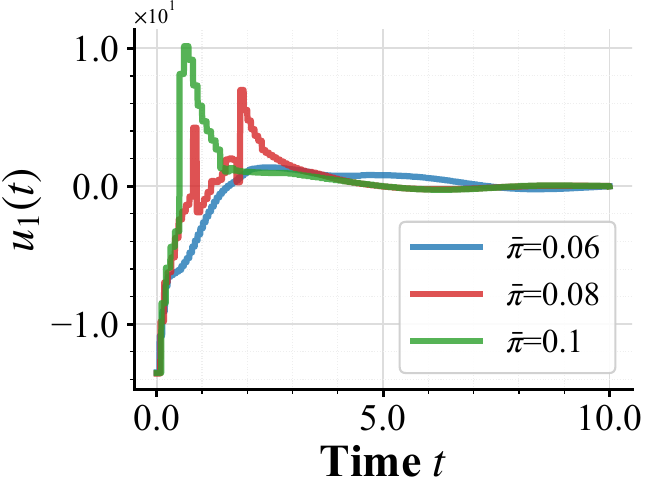}
        \label{fig:indi_control_3}
    }
    \subfigure[Pursuer's control, dimension $2$.]{
        \includegraphics[width=0.46\linewidth]{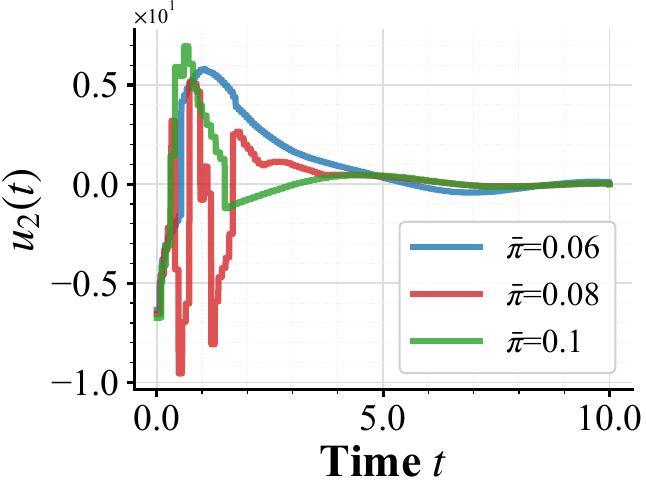}
        \label{fig:indi_control_4}
    }
    \caption{A realized sample trajectory of $\mathbf{G}_{pe}$, illustrating the physical manifestations of the near-optimal mixed strategies across different commitment frequencies.}
    \label{fig:individual_trajectory}
\end{figure}

Fig. \ref{fig:indi_state} presents the phase trajectory of the relative positions $\{x_r(t)\}_{t\in[t_0,T]}$ under near-optimal mixed strategies across varying commitment frequencies. Noticeably, as $\bar\pi$ increases (i.e., the commitment delay widens), the trajectory exhibits intensified stochastic exploration, diverging from the deterministic pure-strategy counterpart. 

Fig. \ref{fig:indi_cost} maps the real-time accumulated costs against the theoretical LQ game values (horizontal dotted lines). The relationship between the commitment delay $\bar\pi$ and the baseline game value reveals a profound structural property: the evader actively leverages mixed strategies to exploit the prolonged uncertainty induced by commitment delays, thereby elevating the guaranteed saddle-point value. Furthermore, Fig. \ref{fig:indi_control_1} and Fig. \ref{fig:indi_control_2} plot the realized piecewise-constant control signals of the evader, explicitly demonstrating the artificial variance injected by the normal distributions to disguise their intent. In contrast, as shown in Fig. \ref{fig:indi_control_3} and \ref{fig:indi_control_4}, although the pursuer's realized control actions also exhibit fluctuations, this apparent stochasticity is strictly propagated from the state trajectory. Because the pursuer's intrinsic injected variance is zero ($\tilde{\Sigma}_1^* = \mathbf{0}$), their control law remains a purely deterministic mapping of the stochastic state, perfectly corroborating the theoretical degeneration dictated by the dynamic energy allocation law.

\subsubsection{SDG Weak Approximation Verification}
\begin{figure}[h]
    \centering
    \subfigure[$1$-Wasserstein distance between the state distributions.]{
        \includegraphics[width=0.46\linewidth]{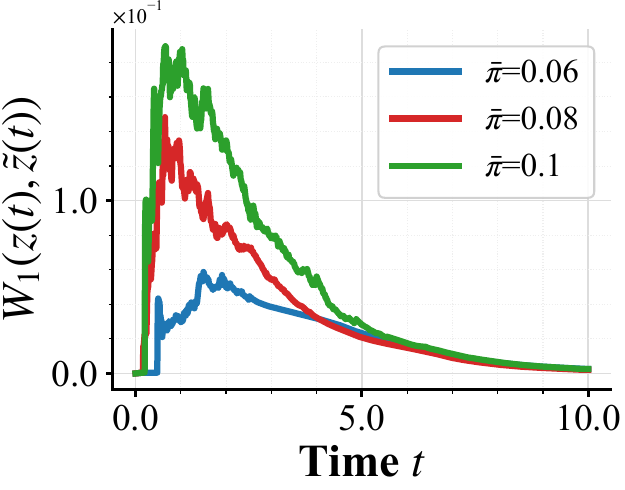}
        \label{fig:approx_state_w1}
    }
    \subfigure[Euclidean distance between the state expectations.]{
        \includegraphics[width=0.46\linewidth]{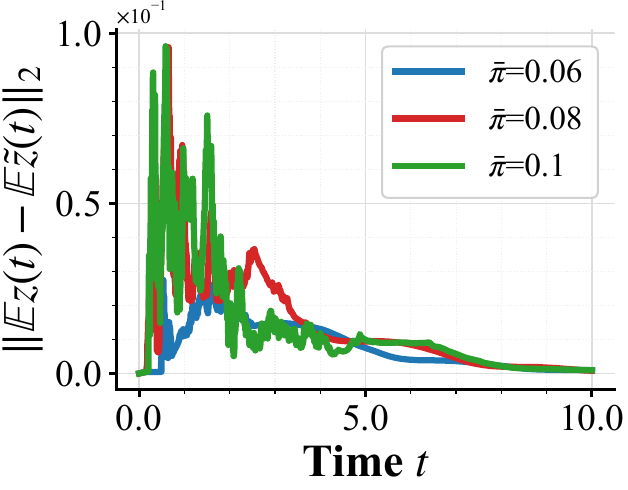}
        \label{fig:approx_state_mean}
    }
    \subfigure[Frobenius distance between the state covariances.]{
        \includegraphics[width=0.46\linewidth]{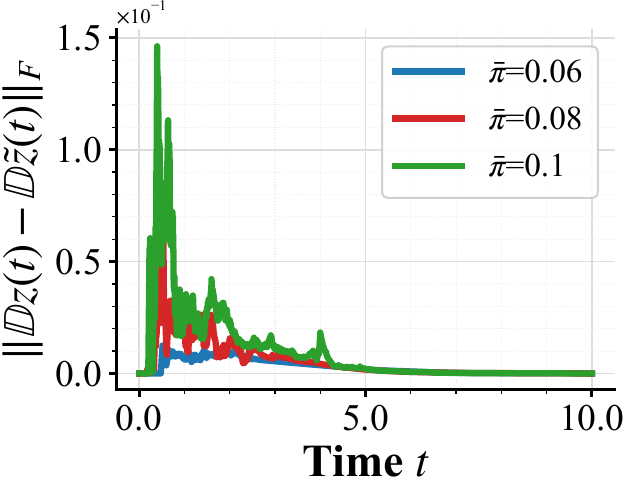}
        \label{fig:approx_state_cov}
    }
    \subfigure[Euclidean distance between the expected accumulated costs.]{
        \includegraphics[width=0.46\linewidth]{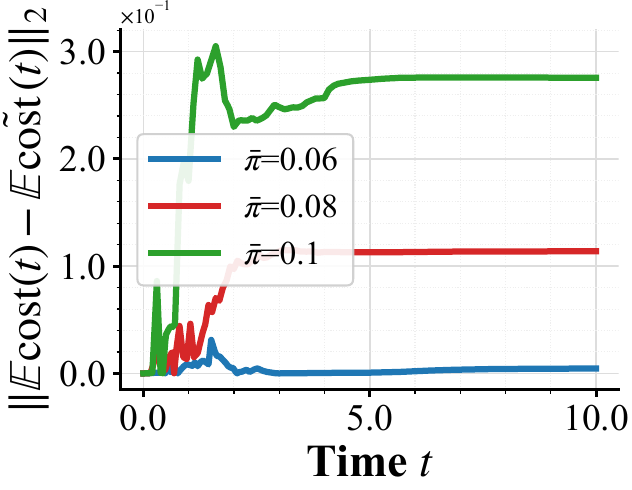}
        \label{fig:approx_cost_mean}
    }
    \caption{Weak approximation verification between the original game $\mathbf{G}_{pe}$ and the surrogate SDG $\mathbf{\tilde{G}}_{pe}$.}
    \label{fig:weak_approx}
\end{figure}

To empirically verify that the constructed surrogate SDG successfully achieves a weak approximation of the original mixed-strategy game, we analyze the distributional divergence between the true state $z(t)$ and the surrogate state $\tilde{z}(t)$. Fig. \ref{fig:approx_state_w1} computes the 1-Wasserstein metric $W_1(z(t), \tilde{z}(t)) = \inf_{\gamma} \mathbb{E}_{(z, \tilde{z})\sim\gamma} \| z - \tilde{z}\|_1$ over the horizon. The approximation errors remain strictly bounded and uniformly diminish as $\bar\pi$ decreases, settling at a scale of $10^{-1}$.

Delving into the moment evolutions, Fig. \ref{fig:approx_state_mean} and \ref{fig:approx_state_cov} isolate the approximation errors in the state expectations and covariances, respectively. The empirical results confirm that the first two statistical moments of the discrete ZOH mixed strategy are precisely tracked by the continuous SDG diffusion, with deviations strictly bounded at the $10^{-2}$ scale. This precise moment-matching practically corroborates the high-order weak approximation accuracy analytically established in Theorem \ref{thm:g_lq_weak_approximation}. Furthermore, Fig. \ref{fig:approx_cost_mean} verifies the required consistency in the expected accumulated costs, thereby fulfilling the performance criterion of the SDG weak approximation outlined in Definition \ref{def:SDG_weak_approximation}.

\subsubsection{Suboptimality Gap via Continuous Best Responses}
\begin{figure}[h]
    \centering
    \includegraphics[width=0.8\linewidth]{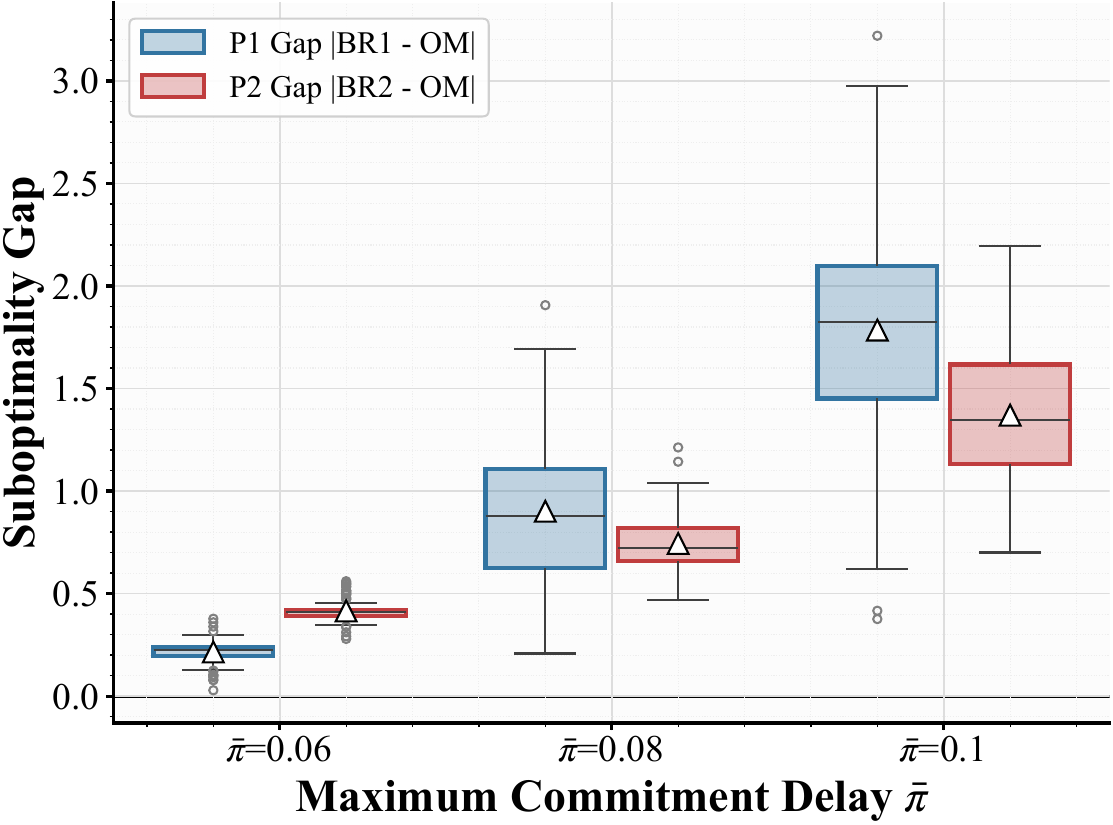}
    \caption{Empirical evaluation of the strategy suboptimality. By relaxing the ZOH constraint for one player to execute a continuous-time best response, the paired performance gap is robustly bounded.}
    \label{fig:suboptimality_gap}
\end{figure}

The suboptimality gap characterizes the maximum performance degradation a player might suffer when adopting the proposed near-optimal mixed strategy, assuming the opponent executes a worst-case optimal counter-strategy. To empirically quantify this gap and certify the theoretical bounds proposed in Theorem \ref{thm:lq_strategy_suboptimality}, we evaluate our synthesized strategies under an asymmetric information structure. Specifically, while the evaluated player is strictly restricted to the discrete-time ZOH mixed strategy, the adversarial opponent is granted continuous-time state observations to execute the ideal delay-free analytical feedback law. Because this unconstrained continuous-time feedback strictly outperforms any discrete-time counter-strategy, it serves as a conservative, cost-minimizing (or maximizing) proxy for the true best response.

To eliminate the cross-sample variance induced by the stochastic policies, we align the Monte Carlo random seeds, allowing us to evaluate the exact paired deviation over identical Brownian paths. Fig. \ref{fig:suboptimality_gap} maps the distributions of the empirical suboptimality gaps for Player 1 and Player 2. The boxplots demonstrate that despite facing a continuously observing and adapting adversary, the performance degradation incurred by the ZOH commitment delay strictly decreases. The gaps progressively vanish toward zero as the $\bar\pi$ scales down, corroborating the theoretically guaranteed $\mathcal{O}(\bar{\pi}^{\frac{1}{2}})$ convergence rate of the suboptimality gap established in Theorem \ref{thm:lq_strategy_suboptimality}.

\section{Conclusions}
In this paper, we have developed an analytical method for synthesizing near-optimal mixed strategies in zero-sum linear-quadratic differential games (ZSLQDGs). By constructing a surrogate pure-strategy stochastic differential game via second-order moment matching, we have effectively bypassed the intractability of optimizing over infinite-dimensional probability measure spaces. The resulting optimal strategy laws are governed by a Generalized Riccati Differential Equation (GRDE), which unveils a fundamental dynamic energy allocation law for variance injection in adversarial control.

We have demonstrated that this surrogate construction yields a high-order $\mathcal{O}(\bar{\pi}^2)$ weak approximation of the original system dynamics. Furthermore, we rigorously certified that both the global value approximation error and the strategy suboptimality gaps are bounded by $\mathcal{O}(\bar{\pi}^{1/2})$, providing explicit performance guarantees for our proposed dual-routing execution architecture. Our results offer robust analytical benchmarks for ZSLQDGs and clarify the physical mechanisms of randomization in continuous-time conflict. Extending this moment-matching framework to $N$-player general-sum differential games is a promising direction for future research, where the equilibrium analysis must generalize from a unique saddle point to more complex Nash equilibria.

\bibliographystyle{IEEEtran}
\bibliography{refs.bib}

@book{anderson2007optimal,
  title={Optimal control: linear quadratic methods},
  author={Anderson, Brian DO and Moore, John B},
  year={2007},
  publisher={Courier Corporation}
}

@article{braslavsky2007feedback,
  title={Feedback stabilization over signal-to-noise ratio constrained channels},
  author={Braslavsky, Julio H and Middleton, Richard H and Freudenberg, James S},
  journal={IEEE Transactions on automatic control},
  volume={52},
  number={8},
  pages={1391--1403},
  year={2007},
  publisher={IEEE}
}

@article{wonham1967optimal,
  title={Optimal stationary control of a linear system with state-dependent noise},
  author={Wonham, W Murray},
  journal={SIAM Journal on Control},
  volume={5},
  number={3},
  pages={486--500},
  year={1967},
  publisher={SIAM}
}

@article{basarUniquenessNashSolution1976,
  title = {On the Uniqueness of the Nash Solution in Linear-Quadratic Differential Games},
  author = {Basar, T.},
  year = {1976},
  journal = {International Journal of Game Theory},
  volume = {5},
  pages = {65--90},
  publisher = {Springer},
  urldate = {2025-05-28}
}

@article{jacobsonOptimalStochasticLinear2003,
  title = {Optimal Stochastic Linear Systems with Exponential Performance Criteria and Their Relation to Deterministic Differential Games},
  author = {Jacobson, David},
  year = {2003},
  journal = {IEEE Transactions on Automatic control},
  volume = {18},
  number = {2},
  pages = {124--131},
  publisher = {IEEE},
  urldate = {2025-05-28}
}

@article{weerenAsymptoticAnalysisLinear1999,
  title = {Asymptotic Analysis of Linear Feedback Nash Equilibria in Nonzero-Sum Linear-Quadratic Differential Games},
  author = {Weeren, ARIE JTM and Schumacher, Johannes M. and Engwerda, Jacob C.},
  year = {1999},
  journal = {Journal of Optimization Theory and Applications},
  volume = {101},
  pages = {693--722},
  publisher = {Springer},
  urldate = {2025-05-28}
}

@article{aggarwalLinearQuadraticZeroSum2024,
  title = {Linear Quadratic Zero-Sum Differential Games With Intermittent and Costly Sensing},
  author = {Aggarwal, Shubham and Ba{\c s}ar, Tamer and Maity, Dipankar},
  year = {2024},
  journal = {IEEE Control Systems Letters},
  volume = {8},
  pages = {1601--1606},
  issn = {2475-1456},
  urldate = {2025-05-28}
}

@article{hoDifferentialGamesOptimal1965a,
  title = {Differential Games and Optimal Pursuit-Evasion Strategies},
  author = {Ho, Y. and Bryson, A. and Baron, S.},
  year = {1965},
  month = oct,
  journal = {IEEE Transactions on Automatic Control},
  volume = {10},
  number = {4},
  pages = {385--389},
  issn = {1558-2523}
}

@article{jagatNonlinearControlSpacecraft2017,
  title = {Nonlinear Control for Spacecraft Pursuit-Evasion Game Using the State-Dependent Riccati Equation Method},
  author = {Jagat, Ashish and Sinclair, Andrew J.},
  year = {2017},
  month = dec,
  journal = {IEEE Transactions on Aerospace and Electronic Systems},
  volume = {53},
  number = {6},
  pages = {3032--3042},
  issn = {1557-9603},
  urldate = {2025-05-23}
}

@article{liDefendingAssetLinear2011,
  title = {Defending an Asset: A Linear Quadratic Game Approach},
  shorttitle = {Defending an Asset},
  author = {Li, Dongxu and Cruz, Jose B.},
  year = {2011},
  month = apr,
  journal = {IEEE Transactions on Aerospace and Electronic Systems},
  volume = {47},
  number = {2},
  pages = {1026--1044},
  issn = {1557-9603},
  urldate = {2025-05-25}
}

@inproceedings{menonGuidanceLawsSpacecraft1988,
  title = {Guidance Laws for Spacecraft Pursuit-Evasion and Rendezvous},
  booktitle = {Guidance, Navigation and Control Conference},
  author = {Menon, P. and Calise, A.},
  year = {1988},
  month = aug,
  publisher = {{American Institute of Aeronautics and Astronautics}},
  address = {Minneapolis,MN,U.S.A.},
  urldate = {2025-05-25},
  langid = {english}
}

@article{turetskyMissileGuidanceLaws2003,
  title = {Missile Guidance Laws Based on Pursuit--Evasion Game Formulations},
  author = {Turetsky, Vladimir and Shinar, Josef},
  year = {2003},
  month = apr,
  journal = {Automatica},
  volume = {39},
  number = {4},
  pages = {607--618},
  issn = {0005-1098},
  urldate = {2025-05-27}
}

@article{xu2026optimalmixedstrategyzerosum,
      title={Near-Optimal Mixed Strategy for Zero-Sum Differential Games}, 
      author={Tao Xu and Wang Xi and Jianping He},
      year={2026},
      journal={arXiv preprint arXiv:2308.01144} 
}

@incollection{aumann28MixedBehavior1964,
  title = {28. Mixed and Behavior Strategies in Infinite Extensive Games},
  booktitle = {Advances in Game Theory. (AM-52)},
  author = {Aumann, Robert J.},
  year = {1964},
  month = dec,
  pages = {627--650},
  publisher = {Princeton University Press},
  urldate = {2024-04-09},
  isbn = {978-1-4008-8201-4}
}

@article{aumannBorelStructuresFunction1961,
  title = {Borel Structures for Function Spaces},
  author = {Aumann, Robert J.},
  year = {1961},
  journal = {Illinois Journal of Mathematics},
  volume = {5},
  number = {4},
  pages = {614--630},
  publisher = {Duke University Press},
  urldate = {2024-04-09}
}

@article{aumannSpacesMeasurableTransformations1960,
  title = {Spaces of Measurable Transformations},
  author = {Aumann, Robert J.},
  year = {1960},
  journal = {Bulletin of the American Mathematical Society},
  volume = {66},
  number = {4},
  pages = {301--304},
  urldate = {2024-04-09}
}

@book{basarDynamicNoncooperativeGame1998,
  title = {Dynamic Noncooperative Game Theory},
  author = {Ba{\c s}ar, Tamer and Olsder, Geert Jan},
  year = {1998},
  publisher = {SIAM}
}

@incollection{berkovitzRelaxedControls2012,
  title = {Relaxed Controls},
  booktitle = {Nonlinear Optimal Control Theory},
  author = {Berkovitz, Leonard David and Medhin, Negash G.},
  year = {2012},
  publisher = {{Chapman and Hall/CRC}},
  isbn = {978-0-429-09810-9}
}

@article{buckdahnValueFunctionDifferential2013,
  title = {Value Function of Differential Games without Isaacs Conditions. An Approach with Nonanticipative Mixed Strategies},
  author = {Buckdahn, Rainer and Li, Juan and Quincampoix, Marc},
  year = {2013},
  month = nov,
  journal = {International Journal of Game Theory},
  volume = {42},
  number = {4},
  pages = {989--1020},
  issn = {1432-1270},
  urldate = {2023-03-04},
  langid = {english}
}

@article{buckdahnValueMixedStrategies2014,
  title = {Value in Mixed Strategies for Zero-Sum Stochastic Differential Games Without Isaacs Condition},
  author = {Buckdahn, Rainer and Li, Juan and Quincampoix, Marc},
  year = {2014},
  journal = {The Annals of Probability},
  volume = {42},
  number = {4},
  eprint = {42920505},
  eprinttype = {jstor},
  pages = {1724--1768},
  publisher = {Institute of Mathematical Statistics},
  issn = {0091-1798},
  urldate = {2023-03-29}
}

@article{cardaliaguetDifferentialGamesAsymmetric2007,
  title = {Differential Games with Asymmetric Information},
  author = {Cardaliaguet, P.},
  year = {2007},
  month = jan,
  journal = {SIAM Journal on Control and Optimization},
  volume = {46},
  number = {3},
  pages = {816--838},
  publisher = {{Society for Industrial and Applied Mathematics}},
  issn = {0363-0129},
  urldate = {2023-02-09}
}

@article{cardaliaguetPureRandomStrategies2014,
  title = {Pure and Random Strategies in Differential Game with Incomplete Informations},
  author = {Cardaliaguet, Pierre and Jimenez, Chlo{\'e} and Quincampoix, Marc},
  year = {2014},
  journal = {Journal of Dynamics and Games},
  volume = {1},
  number = {3},
  pages = {363--375},
  publisher = {{Journal of Dynamics and Games}},
  issn = {2164-6066},
  urldate = {2023-02-09},
  copyright = {http://creativecommons.org/licenses/by/3.0/},
  langid = {english}
}

@article{coxControlledMeasurevaluedMartingales2024,
  title = {Controlled Measure-Valued Martingales: A Viscosity Solution Approach},
  shorttitle = {Controlled Measure-Valued Martingales},
  author = {Cox, Alexander M. G. and K{\"a}llblad, Sigrid and Larsson, Martin and {Svaluto-Ferro}, Sara},
  year = {2024},
  month = apr,
  journal = {The Annals of Applied Probability},
  volume = {34},
  number = {2},
  pages = {1987--2035},
  publisher = {Institute of Mathematical Statistics},
  issn = {1050-5164, 2168-8737},
  urldate = {2024-05-04}
}

@article{elliottExistenceValueStochastic1976,
  title = {The Existence of Value in Stochastic Differential Games},
  author = {Elliott, Robert},
  year = {1976},
  month = jan,
  journal = {SIAM Journal on Control and Optimization},
  volume = {14},
  number = {1},
  pages = {85--94},
  publisher = {{Society for Industrial and Applied Mathematics}},
  issn = {0363-0129},
  urldate = {2023-03-28}
}

@book{engwerdaLQDynamicOptimization2005,
  title = {LQ Dynamic Optimization and Differential Games},
  author = {Engwerda, Jacob},
  year = {2005},
  month = jun,
  edition = {1st edition},
  publisher = {Wiley},
  address = {Chicester, West Sussex, England Hoboken, NJ},
  isbn = {978-0-470-01524-7},
  langid = {english}
}

@book{flemingControlledMarkovProcesses2006,
  title = {Controlled Markov Processes and Viscosity Solutions},
  author = {Fleming, Wendell H. and Soner, Halil Mete},
  year = {2006},
  month = feb,
  publisher = {Springer Science \& Business Media},
  googlebooks = {4Bjz2iWmLyQC},
  isbn = {978-0-387-31071-8},
  langid = {english}
}

@article{flemingMixedStrategiesDeterministic2017a,
  title = {Mixed Strategies for Deterministic Differential Games},
  author = {Fleming, Wendell H. and {Hernandez-Hernandez}, Daniel},
  year = {2017},
  month = jun,
  journal = {Communications on Stochastic Analysis},
  volume = {11},
  number = {2},
  issn = {0973-9599},
  urldate = {2023-05-17},
  langid = {english}
}

@inproceedings{goodfellowGenerativeAdversarialNets2014,
  title = {Generative Adversarial Nets},
  booktitle = {Advances in Neural Information Processing Systems},
  author = {Goodfellow, Ian and {Pouget-Abadie}, Jean and Mirza, Mehdi and Xu, Bing and {Warde-Farley}, David and Ozair, Sherjil and Courville, Aaron and Bengio, Yoshua},
  year = {2014},
  volume = {27},
  urldate = {2024-05-17}
}

@article{harrisExistenceSubgameperfectEquilibrium1995,
  title = {The Existence of Subgame-Perfect Equilibrium in Continuous Games with Almost Perfect Information: A Case for Public Randomization},
  shorttitle = {The Existence of Subgame-Perfect Equilibrium in Continuous Games with Almost Perfect Information},
  author = {Harris, Christopher and Reny, Philip and Robson, Arthur},
  year = {1995},
  journal = {Econometrica: Journal of the Econometric Society},
  eprint = {2171906},
  eprinttype = {jstor},
  pages = {507--544},
  publisher = {JSTOR},
  urldate = {2025-03-25}
}

@inproceedings{hespanhaProbabilisticPursuitevasionGames2000,
  title = {Probabilistic Pursuit-Evasion Games: A One-Step Nash Approach},
  shorttitle = {Probabilistic Pursuit-Evasion Games},
  booktitle = {Proceedings of the 39th IEEE Conference on Decision and Control},
  author = {Hespanha, J.P. and Prandini, M. and Sastry, S.},
  year = {2000},
  month = dec,
  volume = {3},
  pages = {2272-2277 vol.3},
  issn = {0191-2216}
}

@book{isaacsDifferentialGamesMathematical1999,
  title = {Differential Games: A Mathematical Theory with Applications to Warfare and Pursuit, Control and Optimization},
  shorttitle = {Differential Games},
  author = {Isaacs, Rufus},
  year = {1999},
  publisher = {Courier Corporation}
}

@article{islerRandomizedPursuitEvasionLocal2006,
  title = {Randomized Pursuit-Evasion with Local Visibility},
  author = {Isler, Volkan and Kannan, Sampath and Khanna, Sanjeev},
  year = {2006},
  month = jan,
  journal = {SIAM Journal on Discrete Mathematics},
  volume = {20},
  number = {1},
  pages = {26--41},
  publisher = {{Society for Industrial and Applied Mathematics}},
  issn = {0895-4801},
  urldate = {2023-06-19}
}

@article{islerRandomizedPursuitevasionPolygonal2005,
  title = {Randomized Pursuit-Evasion in a Polygonal Environment},
  author = {Isler, V. and Kannan, S. and Khanna, S.},
  year = {2005},
  month = oct,
  journal = {IEEE Transactions on Robotics},
  volume = {21},
  number = {5},
  pages = {875--884},
  issn = {1941-0468}
}

@article{kumarOptimalMixedStrategies1980,
  title = {Optimal Mixed Strategies in a Dynamic Game},
  author = {Kumar, P.},
  year = {1980},
  month = aug,
  journal = {IEEE Transactions on Automatic Control},
  volume = {25},
  number = {4},
  pages = {743--749},
  issn = {1558-2523},
  urldate = {2024-04-23}
}

@article{kunischOptimalControlUndamped2016,
  title = {Optimal Control of the Undamped Linear Wave Equation with Measure Valued Controls},
  author = {Kunisch, Karl and Trautmann, Philip and Vexler, Boris},
  year = {2016},
  month = jan,
  journal = {SIAM Journal on Control and Optimization},
  volume = {54},
  number = {3},
  pages = {1212--1244},
  publisher = {{Society for Industrial and Applied Mathematics}},
  issn = {0363-0129},
  urldate = {2024-05-04}
}

@article{levantChatteringAnalysis2010,
  title = {Chattering Analysis},
  author = {Levant, Arie},
  year = {2010},
  month = jun,
  journal = {IEEE Transactions on Automatic Control},
  volume = {55},
  number = {6},
  pages = {1380--1389},
  issn = {1558-2523},
  urldate = {2025-03-19}
}

@inproceedings{martinFindingMixedstrategyEquilibria2023,
  title = {Finding Mixed-Strategy Equilibria of Continuous-Action Games without Gradients Using Randomized Policy Networks},
  booktitle = {Proceedings of the Thirty-Second International Joint Conference on Artificial Intelligence},
  author = {Martin, Carlos and Sandholm, Tuomas},
  year = {2023},
  month = aug,
  series = {IJCAI '23},
  pages = {2844--2852},
  urldate = {2024-05-10},
  isbn = {978-1-956792-03-4}
}

@article{milshteinWeakApproximationSolutions1986a,
  title = {Weak Approximation of Solutions of Systems of Stochastic Differential Equations},
  author = {Mil'shtein, G. N.},
  year = {1986},
  journal = {Theory of Probability \& Its Applications},
  volume = {30},
  number = {4},
  pages = {750--766},
  publisher = {SIAM}
}

@article{perkinsMixedStrategyLearningContinuous2017a,
  title = {Mixed-Strategy Learning With Continuous Action Sets},
  author = {Perkins, Steven and Mertikopoulos, Panayotis and Leslie, David S.},
  year = {2017},
  month = jan,
  journal = {IEEE Transactions on Automatic Control},
  volume = {62},
  number = {1},
  pages = {379--384},
  issn = {1558-2523},
  urldate = {2024-05-06}
}

@inproceedings{petersLearningMixedStrategies2022,
  title = {Learning Mixed Strategies in Trajectory Games},
  booktitle = {Robotics: Science and Systems XVIII},
  author = {Peters, Lasse and {Fridovich-Keil}, David and Ferranti, Laura and Stachniss, Cyrill and {Alonso-Mora}, Javier and Laine, Forrest},
  year = {2022},
  month = jun,
  publisher = {{Robotics: Science and Systems Foundation}},
  urldate = {2024-04-23},
  isbn = {978-0-9923747-8-5},
  langid = {english}
}

@article{sirbuStochasticPerronsMethod2014,
  title = {Stochastic Perron's Method and Elementary Strategies for Zero-Sum Differential Games},
  author = {S{\^i}rbu, Mihai},
  year = {2014},
  month = jan,
  journal = {SIAM Journal on Control and Optimization},
  volume = {52},
  number = {3},
  pages = {1693--1711},
  publisher = {{Society for Industrial and Applied Mathematics}},
  issn = {0363-0129},
  urldate = {2024-04-07}
}

@article{vidalProbabilisticPursuitevasionGames2002a,
  title = {Probabilistic Pursuit-Evasion Games: Theory, Implementation, and Experimental Evaluation},
  shorttitle = {Probabilistic Pursuit-Evasion Games},
  author = {Vidal, R. and Shakernia, O. and Kim, H.J. and Shim, D.H. and Sastry, S.},
  year = {2002},
  month = oct,
  journal = {IEEE Transactions on Robotics and Automation},
  volume = {18},
  number = {5},
  pages = {662--669},
  issn = {2374-958X}
}

@article{wangPontryaginsMaximumPrinciple2010,
  title = {A Pontryagin's Maximum Principle for Non-Zero Sum Differential Games of BSDEs with Applications},
  author = {Wang, Guangchen and Yu, Zhiyong},
  year = {2010},
  month = jul,
  journal = {IEEE Transactions on Automatic Control},
  volume = {55},
  number = {7},
  pages = {1742--1747},
  issn = {1558-2523}
}

@inproceedings{xuDifferentialGameMixed2023b,
  title = {Differential Game with Mixed Strategies: A Weak Approximation Approach},
  shorttitle = {Differential Game with Mixed Strategies},
  booktitle = {2023 62nd IEEE Conference on Decision and Control (CDC)},
  author = {Xu, Tao and Xi, Wang and He, Jianping},
  year = {2023},
  month = dec,
  pages = {5216--5221},
  issn = {2576-2370},
  urldate = {2025-03-25}
}

@article{dou2019finding,
  title={Finding mixed strategy nash equilibrium for continuous games through deep learning},
  author={Dou, Zehao and Yan, Xiang and Wang, Dongge and Deng, Xiaotie},
  journal={arXiv preprint arXiv:1910.12075},
  year={2019}
}

\begin{appendices}

\section{Proof of Theorem \ref{thm:SE}} \label{app:thm:SE}
\begin{proof}
    The proof proceeds by backward induction. At the terminal time $t_N = T$, the game value uniquely exists as $V_m^\pi(T,x) = x^\top Q_T x$.

    Assume the unique game value $V_m^\pi(t_k, \cdot)$ exists at step $k$. For the local stage game over $[t_{k-1}, t_k)$ given $x(t_{k-1}) = x$, let $\mathcal{P}_x^{(1)}$ and $\mathcal{P}_x^{(2)}$ denote the convex sets of probability measures satisfying the energy constraints in Definition \ref{def:admissible_mixed} with capacities $\gamma_1$ and $\gamma_2$, respectively. Under Assumption \ref{ass:independence}, the local expected cost-to-go evaluates to:
    \begin{equation*}
        \begin{aligned}
            &J_{k-1}(x, \mu, \nu) \triangleq \int_{U \times V} \bigg\{ \int_{t_{k-1}}^{t_k} h(t, x(t), u, v) dt \\
            &\qquad\qquad\qquad\qquad + V_m^\pi(t_k, x(t_k)) \bigg\} \mu(du)\nu(dv),
        \end{aligned}
    \end{equation*}

    To equate the upper and lower values (i.e., $\inf_{\mu} \sup_{\nu} J_{k-1} = \sup_{\nu} \inf_{\mu} J_{k-1}$), we verify the requisite conditions of Sion's Minimax Theorem:
    1) \textit{Weak Compactness:} The uniform second-moment bounds in Definition \ref{def:admissible_mixed} guarantee that the measure families $\mathcal{P}_x^{(1)}$ and $\mathcal{P}_x^{(2)}$ are tight. By Prokhorov's Theorem, this tightness ensures relative weak compactness. Furthermore, since the second-moment functional is lower semi-continuous in the weak topology, $\mathcal{P}_x^{(i)}$ are closed, and thus strictly weakly compact.
    2) \textit{Bilinearity and Continuity:} As an expectation, the functional $J_{k-1}$ is strictly bilinear (hence concave-convex) and continuous with respect to $(\mu, \nu)$ in the weak topology.

    With all conditions satisfied, Sion's Minimax Theorem guarantees the existence of a unique local saddle point, confirming $V_{m,+}^\pi(t_{k-1}, x) = V_{m,-}^\pi(t_{k-1}, x) \triangleq V_m^\pi(t_{k-1}, x)$. By backward induction down to $t_0$, the global upper and lower value functions coincide, i.e., $V_{m,+}^\pi(t_0,x_0) = V_{m,-}^\pi(t_0,x_0) = V^\pi_m(t_0,x_0)$, completing the proof.
\end{proof}

\section{Proof of Theorem \ref{thm:g_lq_weak_approximation}}\label{app:thm:g_lq_weak_approximation}
\begin{proof}
Our certification of the SDG weak approximation proceeds via a two-stage procedure derived from the classical weak approximation theory of SDEs \cite{milshteinWeakApproximationSolutions1986a}: i) investigate the local one-step approximation errors between the original game dynamics and the designed SDG, and ii) if the local one-step approximation errors for the first three moments are bounded by $\mathcal{O}(\bar{\pi}^3)$, then the global approximation error over the entire time horizon possesses an order of $2$, i.e., $\mathcal{O}(\bar{\pi}^2)$.

Unlike classical SDE weak approximation, an SDG approximation requires both the state trajectory and the accumulated cost function to be jointly approximated. To facilitate this, we first transform the original game $\mathbf{G}_{lq}$ and the surrogate SDG $\mathbf{\tilde{G}}_{lq}$ into auxiliary forms where the running costs are absorbed into an augmented state variable $y$. Let $z(t) = [u(t)^\top, v(t)^\top]^\top$, $R \triangleq \diag(R_1, -R_2)$ and $B \triangleq [B_1, B_2]$. The auxiliary games are constructed as:
\begin{equation*}
    \begin{aligned}
        (\mathbf{G}_{lq}^{aux})\!:&\!\left\{\!\!\!\begin{array}{l}
             \dot{x}(t)=  Ax(t) + B z(t), \\
             \dot{y}(t) = \mathbb{E}\left[x^{\top}(t)Qx(t) + z^{\top}(t)Rz(t)\right],\\
             x(t_0)= x_0, \quad y(t_0) = 0,\\
             J = \mathbb{E}\left[y(T)+x^{\top}(T)Q_Tx(T)\right].
        \end{array}\right.\\
         (\mathbf{\tilde{G}}_{lq}^{aux})\!:&\!\left\{\!\!\!\begin{array}{l}
              d \tilde{x}(t) \!=\! (A\tilde{x}(t) + B\tilde{\Gamma}(t)) dt + \delta_{\pi}^{\frac{1}{2}}(t)B\tilde{\Lambda}(t)d W_t, \\
              d \tilde{y}(t) \!=\! \mathbb{E}\!\left[\tilde{x}^{\top}\!(t)Q\tilde{x}(t) \!+\! \tilde{\Gamma}^{\top}\!(t)R\tilde{\Gamma}(t) \!+\! \tr(\tilde{\Sigma}(t)R) \right]\!dt,\\
              \tilde{x}(t_0) \!=\! x_0, \quad \tilde{y}(t_0) \!=\! 0,\\
              \tilde{J} \!=\! \mathbb{E}\left[\tilde{y}(T)+\tilde{x}^{\top}(T)Q_T\tilde{x}(T)\right].
         \end{array}\right.
    \end{aligned}
\end{equation*}

We now proceed to the local one-step error analysis. Let $\delta = t_1 - t_0$ be the length of the first commitment interval, satisfying $\delta \leq \bar{\pi}$. Let $\Delta=[\Delta_x^{\top},\Delta_y]^{\top}$ be the exact one-step increment for $\mathbf{G}_{lq}^{aux}$, and $\tilde{\Delta}=[\tilde{\Delta}_x^{\top},\tilde{\Delta}_y]^{\top}$ be the increment for the surrogate $\mathbf{\tilde{G}}_{lq}^{aux}$.

\begin{lemma}[One-step local approximation]\label{lem:one_step_dg}
    Under the strategy projection map $\phi$ defined in Lemma \ref{def:map_lq}, the differences between the local moments of $\Delta$ and $\tilde{\Delta}$ conditional on the initial state are strictly bounded by:
    \begin{equation*}
        \begin{array}{l}
           \text{(i) } \| \mathbb{E} [\Delta_x] - \mathbb{E}[\tilde{\Delta}_x] \| = 0, \\
           \text{(ii) } \| \mathbb{D} [\Delta_x] - \mathbb{D}[\tilde{\Delta}_x] \| = \mathcal{O}(\bar{\pi}^4), \\
           \text{(iii) } \| \mathbb{E} [\Delta_x^{\otimes 3}] - \mathbb{E}[\tilde{\Delta}_x^{\otimes 3}] \| = \mathcal{O}(\bar{\pi}^3), \\
           \text{(iv) } | \Delta_y - \tilde{\Delta}_y | = \mathcal{O}(\bar{\pi}^5).
        \end{array}
    \end{equation*}
\end{lemma}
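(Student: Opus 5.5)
The plan is to exploit the fact that, on a single commitment interval $[t_0,t_1)$ with $\delta=t_1-t_0\le\bar\pi$, both auxiliary systems are linear with constant inputs, so every local moment can be written in closed form and expanded in $\delta$. Fix the initial state $x_0$ and write $z=[u_0^\top,v_0^\top]^\top$. By Assumption~\ref{ass:independence}, $z$ has mean $\Gamma=[\tilde\Gamma_1^\top,\tilde\Gamma_2^\top]^\top$ and block-diagonal covariance $\Sigma=\diag(\tilde\Sigma_1,\tilde\Sigma_2)$; these are exactly the quantities produced by the projection maps of Definition~\ref{def:map_lq}. Set
\[
M(t)\triangleq\int_0^t e^{As}\,ds\,B = tB+\tfrac{t^2}{2}AB+\mathcal{O}(t^3).
\]
Then the exact increment on $[t_0,t_0+t]$ is $\Delta_x(t)=(e^{At}-I)x_0+M(t)z$. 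The surrogate increment, obtained from the variation-of-constants formula for the linear SDE, is
\[
\tilde\Delta_x(t)=(e^{At}-I)x_0+M(t)\Gamma+\delta^{1/2}\int_0^t e^{A(t-s)}B\tilde\Lambda\,dW_s .
\]
All constants below will be controlled through $\|\Gamma\|^2+\tr(\Sigma)\le\max_i\gamma_i^2\|x_0\|^2$ (the Remark on feasibility), so they have polynomial growth in $x_0$, as Milshtein's framework requires.

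\textbf{Items (i) and (ii).} Since the stochastic integral has zero mean, $\mathbb{E}[\Delta_x(t)]=\mathbb{E}[\tilde\Delta_x(t)]$ holds identically in $t$, which gives (i). For (ii), the exact covariance is $M(\delta)\Sigma M(\delta)^\top$. By the It\^o isometry, the surrogate covariance is $\delta\int_0^\delta e^{As}B\Sigma B^\top e^{A^\top s}ds$. Expanding both to third order, each equals
\[
\delta^2 B\Sigma B^\top+\tfrac{\delta^3}{2}\big(AB\Sigma B^\top+B\Sigma B^\top A^\top\big)+\mathcal{O}(\delta^4\|\Sigma\|).
\]
The $\delta^2$ and $\delta^3$ coefficients therefore cancel, leaving $\mathcal{O}(\bar\pi^4)$. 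This cancellation is precisely what the factor $\delta_\pi^{1/2}(t)$ in the diffusion was designed to produce.

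\textbf{Item (iii).} For (iii), I would expand $\Delta_x^{\otimes3}$ and $\tilde\Delta_x^{\otimes3}$ multilinearly. Both drift parts are $\mathcal{O}(\delta)$, and both random parts are $\mathcal{O}(\delta)$ in $L^3$. Hence each third moment is individually $\mathcal{O}(\delta^3)$, and so is their difference; no cancellation is needed here. The one technical point is that $\mathbb{E}\|z\|^3$ must be finite. The admissibility constraint only bounds the second moment, so I would either invoke the Gaussian structure of the executed measures in Algorithm~\ref{alg:g_lq_1} or add a uniform third-moment (polynomial-growth) bound on $\mu_k,\nu_k$ as a standing regularity condition.

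\textbf{Item (iv) and the main obstacle.} For (iv), the control part of $\Delta_y$ matches exactly, because $\mathbb{E}[z^\top Rz]=\Gamma^\top R\Gamma+\tr(\Sigma R)$. The state part equals $\int_0^\delta\big(m(t)^\top Qm(t)+\tr(QC(t))\big)dt$. The means $m(t)$ coincide by the argument for (i), so everything reduces to comparing the covariances $C(t)$ and $\tilde C(t)$ inside the interval. This is the step I expect to be the main obstacle. For $t<\delta$ the exact covariance is $M(t)\Sigma M(t)^\top\approx t^2B\Sigma B^\top$, whereas the surrogate gives $\approx\delta t\,B\Sigma B^\top$. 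A pointwise comparison therefore yields only $\int_0^\delta\mathcal{O}(\delta t)\,dt=\mathcal{O}(\delta^3)$. To reach the claimed $\mathcal{O}(\bar\pi^5)$, I would first try to compare only the integrated quantities over a full interval, using the exact Taylor structure in $t$ and $\delta$. If the leading $\delta^3\tr(QB\Sigma B^\top)$ terms ($1/3$ versus $1/2$) fail to cancel, the fallback is to state the cost bound at the order actually needed for the global argument. That fallback would be a local $\mathcal{O}(\bar\pi^3)$ per-step error, giving a global $\mathcal{O}(\bar\pi^2)$ after summing over $N=\mathcal{O}(\bar\pi^{-1})$ intervals, which still suffices for Theorem~\ref{thm:g_lq_weak_approximation}.
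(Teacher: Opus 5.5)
Your treatment of (i)--(iii) is essentially the paper's. It uses closed-form one-step solutions and exact agreement of the means at every time in the interval. It shows that the $\delta^2$ and $\delta^3$ coefficients of the endpoint covariances match, and that each third moment is separately $\mathcal{O}(\delta^3)$.

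Your caveat on (iii) is a genuine point the paper passes over. Admissibility only bounds second moments, so $\mathbb{E}[\Delta_x^{\otimes 3}]$ need not exist, and the paper's use of $\mathbb{E}[(Ax_0+Bz(t_0))^{\otimes 3}]$ is an unstated assumption. Of your two remedies, only the standing moment bound is adequate. The lemma feeds Theorem~\ref{thm:g_lq_weak_approximation}, which is invoked for arbitrary admissible pairs (the saddle point and the best responses in Theorems~\ref{thm:lq_value_approximation} and~\ref{thm:lq_strategy_suboptimality}), not only for Gaussian executions. Moreover, the order-$2$ form of Milshtein's theorem asks for local matching of moments up to order five plus a sixth-moment bound. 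The added assumption should therefore control moments up to order six; orders three to five then follow automatically, since both increments are $\mathcal{O}(\delta)$ in every $L^s$. A small slip: for the stacked control the feasibility bound is $\|\Gamma\|^2+\tr(\Sigma)\le(\gamma_1^2+\gamma_2^2)\|x_0\|^2$, not $\max_i\gamma_i^2\|x_0\|^2$.

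For (iv), drop the hedge: the leading terms do not cancel, and the stated $\mathcal{O}(\bar\pi^5)$ is false. Write $C(s),\tilde C(s)$ for the covariances at $t_0+s$, $s\in[0,\delta]$. Then
\[
C(s)=s^2B\Sigma B^\top+\mathcal{O}(s^3),\qquad \tilde C(s)=\delta s\,B\Sigma B^\top+\mathcal{O}(\delta s^2),
\]
and therefore
\[
\Delta_y-\tilde\Delta_y=\int_0^\delta\tr\big(Q(C(s)-\tilde C(s))\big)\,ds=-\frac{\delta^3}{6}\tr(QB\Sigma B^\top)+\mathcal{O}(\delta^4).
\]
For $A=0$ this holds exactly, with no remainder. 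Since $Q\succ\mathbf{0}$, it is nonzero for every admissible pair with $B\Sigma B^\top\neq\mathbf{0}$, and the lemma must cover all admissible pairs.

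The paper reaches $\mathcal{O}(\bar\pi^5)$ by inserting the endpoint estimate of (ii) into the integrand, as though $\mathbb{D}[x(t)]-\mathbb{D}[\tilde x(t)]=\mathcal{O}((t-t_0)^4)$ on the whole interval. But the cancellation in (ii) uses that the diffusion scaling $\delta_\pi(t)=\delta$ equals the elapsed time, which holds only at $t=t_1$. Inside the interval the gap is of order $\delta(t-t_0)$, which is exactly the obstacle you found.

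Your fallback is the correct version of (iv): a local cost error of $\mathcal{O}(\bar\pi^3)$, with a constant proportional to $\|x_0\|^2$. It is also all the global argument needs, because $\sum_k\delta_k^3\le\bar\pi^2(T-t_0)$. Hence the $\mathcal{O}(\bar\pi^2)$ cost conclusion of Theorem~\ref{thm:g_lq_weak_approximation} survives. Summing this way also removes your reliance on $N=\mathcal{O}(\bar\pi^{-1})$, which fails for non-equispaced partitions.
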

\begin{proof}[Proof of Lemma \ref{lem:one_step_dg}]
    For the exact dynamics of $\mathbf{G}_{lq}^{aux}$ under the Zero-Order Hold (ZOH) pattern, the control $z(t)$ is constant over $[t_0, t_1)$. The exact state transition is $x(t) = \Phi(t)x_0 + \Psi(t)z(t_0)$, where $\Phi(t) = e^{A(t-t_0)}$ and $\Psi(t) = \int_{t_0}^t e^{A(t-s)} B ds$. 
    By definition of the projection map, $\tilde{\Gamma}(t_0) = \mathbb{E}[z(t_0)]$. Furthermore, owing to the conditional independence of the players' sampling mechanisms (Assumption \ref{ass:independence}), the cross-covariance is zero, yielding $\tilde{\Sigma}(t_0) = \mathbb{D}[z(t_0)] = \diag(\tilde{\Sigma}_1(t_0), \tilde{\Sigma}_2(t_0))$.

    \underline{\textit{Proof of (i):}} Taking the expectation of the exact state yields $\mathbb{E}[x(t_1)] = \Phi(t_1)x_0 + \Psi(t_1)\tilde{\Gamma}(t_0)$. Due to the linearity of the surrogate drift and $\mathbb{E}[dW_t]=0$, the surrogate state expectation strictly evaluates to $\mathbb{E}[\tilde{x}(t_1)] = \Phi(t_1)x_0 + \Psi(t_1)\tilde{\Gamma}(t_0)$. Thus, the first moment difference is identically $0$.

    \underline{\textit{Proof of (ii):}} The exact covariance of the ZOH dynamics is governed by the matrix $\Psi(t)$:
    \begin{equation*}
        \mathbb{D}[x(t_1)] = \Psi(t_1) \tilde{\Sigma}(t_0) \Psi(t_1)^\top.
    \end{equation*}
    Using the Taylor expansion of the state transition integral $\Psi(t_1) = \int_{t_0}^{t_1} e^{A(t_1-s)} B ds = \delta B + \frac{\delta^2}{2}AB + \mathcal{O}(\delta^3)$, we obtain:
    \begin{equation}\label{eq:exact_cov}
        \mathbb{D}[x(t_1)] = \delta^2 B\tilde{\Sigma}B^\top + \frac{\delta^3}{2}\left(AB\tilde{\Sigma}B^\top \!+\! B\tilde{\Sigma}B^\top A^\top\right) + \mathcal{O}(\delta^4).
    \end{equation}
    Concurrently, applying the Itô isometry to the diffusion term of $\mathbf{\tilde{G}}_{lq}^{aux}$, the surrogate covariance is:
    \begin{equation*}
            \mathbb{D}[\tilde{x}(t_1)] = \delta \int_{t_0}^{t_1} e^{A(t_1-s)} B\tilde{\Sigma}(t_0)B^\top e^{A^\top(t_1-s)} ds.
    \end{equation*}
    Expanding the integrand yields:
    \begin{equation}\label{eq:approx_cov}
        \mathbb{D}[\tilde{x}(t_1)] = \delta^2 B\tilde{\Sigma}B^\top + \frac{\delta^3}{2}\left(AB\tilde{\Sigma}B^\top \!+\! B\tilde{\Sigma}B^\top A^\top\right) + \mathcal{O}(\delta^4).
    \end{equation}
    Remarkably, owing to the global linearity of the dynamics and the specific $\delta^{\frac{1}{2}}$ scaling design of the diffusion term, the second and third-order Taylor terms in \eqref{eq:exact_cov} and \eqref{eq:approx_cov} structurally cancel each other out. Thus, $\|\mathbb{D}[x(t_1)] - \mathbb{D}[\tilde{x}(t_1)]\| = \mathcal{O}(\delta^4) = \mathcal{O}(\bar{\pi}^4)$.

    \underline{\textit{Proof of (iii):}} In the exact ZOH formulation, the random control input $z(t_0)$ is sampled once and held constant over the interval $[t_0, t_1)$, meaning the exact state increment $\Delta_x$ is entirely devoid of intra-interval stochastic driving processes (i.e., no Brownian motion). Thus, the exact increment rigorously evaluates to $\Delta_x = \delta(Ax_0 + Bz(t_0)) + \mathcal{O}(\delta^2)$. Taking the third moment strictly yields $\mathbb{E}[\Delta_x^{\otimes 3}] = \delta^3 \mathbb{E}[(Ax_0 + Bz(t_0))^{\otimes 3}] + \mathcal{O}(\delta^4) = \mathcal{O}(\delta^3)$. 
    
    For the surrogate SDG, the increment comprises a drift component $D \sim \mathcal{O}(\delta)$ and a pure diffusion component $\kappa$. As established in (ii), the variance of the diffusion component scales as $\mathbb{E}[\kappa^{\otimes 2}] = \mathcal{O}(\delta^2)$. Since $\kappa$ is an Itô integral of a deterministic matrix, it is strictly Gaussian, implying its odd moments identically vanish (i.e., $\mathbb{E}[\kappa] = 0$ and $\mathbb{E}[\kappa^{\otimes 3}] = 0$). 
    Expanding the third moment $\mathbb{E}[(D+\kappa)^{\otimes 3}]$, the only non-zero terms are the pure drift $D^{\otimes 3} \sim \mathcal{O}(\delta^3)$ and the cross-terms involving one drift and two diffusion components. These cross-terms evaluate to permutations of $D \otimes \mathbb{E}[\kappa^{\otimes 2}]$, which specifically scale as $\mathcal{O}(\delta) \cdot \mathcal{O}(\delta^2) = \mathcal{O}(\delta^3)$. Thus, both exact and surrogate third moments inherently reside in $\mathcal{O}(\delta^3)$, bounding their difference rigidly at $\mathcal{O}(\bar{\pi}^3)$.

    \underline{\textit{Proof of (iv):}} The expected control cost terms inside the integral match identically because $\mathbb{E}[z^\top R z] = \tilde{\Gamma}^\top R \tilde{\Gamma} + \tr(\tilde{\Sigma} R)$. The accumulated state cost difference over $[t_0, t_1)$ solely depends on the state covariance difference established in (ii):
    \begin{equation*}
        \begin{aligned}
            \Delta_y - \tilde{\Delta}_y &= \int_{t_0}^{t_1} \tr\left\{ Q \left( \mathbb{D}[x(t)] - \mathbb{D}[\tilde{x}(t)] \right) \right\} dt \\
            &= \tr\left\{ Q \int_{t_0}^{t_1} \mathcal{O}((t-t_0)^4) dt \right\} = \mathcal{O}(\delta^5) = \mathcal{O}(\bar{\pi}^5).
        \end{aligned}
    \end{equation*}
    The local one-step proof is thus completed.
\end{proof}

According to Milshtein's theorem \cite[Theorem 2]{milshteinWeakApproximationSolutions1986a}, Lemma \ref{lem:one_step_dg} guarantees that the local weak approximation errors for the first three moments are strictly bounded by $\mathcal{O}(\bar{\pi}^3)$. Accumulating this $\mathcal{O}(\bar{\pi}^3)$ error per step over $N = (T-t_0)/\bar{\pi}$ intervals yields a global approximation error of $\mathcal{O}(\bar{\pi}^2)$. 

Consequently, the continuous state trajectory and the accumulated cost of the designed surrogate SDG $\mathbf{\tilde{G}}_{lq}^{aux}$ under pure strategies constitute a strict order $2$ weak approximation of the original game $\mathbf{G}_{lq}^{aux}$ under mixed strategies. Reverting the auxiliary variables to the original cost functional yields $|J(t_0, x_0, \alpha, \beta) - \tilde{J}(t_0, x_0, \phi_1(\alpha), \phi_2(\beta))| = \mathcal{O}(\bar{\pi}^2)$, which completes the proof.
\end{proof}

\section{Proof of Theorem \ref{thm:dg-lq-solution}} \label{app:thm:dg-lq-solution}
\begin{proof}
    For brevity, we drop the time argument $t$ and denote $\tilde{V}_p$ as $\tilde{V}$. The HJI equation for the continuous-time surrogate SDG is formulated as:
    \begin{equation}\label{eq:HJI_original}
        -\partial_t \tilde{V} = \inf_{\tilde{u} \in \tilde{\mathcal{C}}_1} \sup_{\tilde{v} \in \tilde{\mathcal{C}}_2} \left\{ \tilde{h} + (\partial_x \tilde{V})^\top \tilde{f} + \frac{1}{2} \tr\big( \tilde{\Sigma}_{diff} \partial_x^2 \tilde{V} \big) \right\},
    \end{equation}
    where the equivalent drift is $\tilde{f} = Ax + B_1\tilde{\Gamma}_1 + B_2\tilde{\Gamma}_2$ and the equivalent diffusion covariance is explicitly evaluated as $\tilde{\Sigma}_{diff} = \delta_\pi(t) \left(B_1 \tilde{\Sigma}_1 B_1^\top + B_2 \tilde{\Sigma}_2 B_2^\top\right)$.
    Substituting the ansatz $\tilde{V}(t,x)=x^\top P(t)x$ into the HJI equation \eqref{eq:HJI_original} with drift $\tilde{f}$ and diffusion $\tilde{\Sigma}_{diff}$, we observe that the min-max optimization decouples into two independent subproblems:
    \begin{equation}
        -\dot{P} = A^\top P + PA + Q + \inf_{\tilde{u}} \tilde{H}_1 + \sup_{\tilde{v}} \tilde{H}_2,
    \end{equation}
    where the local decoupled Hamiltonians are defined as:
    \begin{align*}
        \tilde{H}_1 &= 2x^\top P B_1 \tilde{\Gamma}_1 + \tilde{\Gamma}_1^\top R_1 \tilde{\Gamma}_1 + \tr\left( (\delta_\pi B_1^\top P B_1 + R_1) \tilde{\Sigma}_1 \right), \\
        \tilde{H}_2 &= 2x^\top P B_2 \tilde{\Gamma}_2 - \tilde{\Gamma}_2^\top R_2 \tilde{\Gamma}_2 + \tr\left( (\delta_\pi B_2^\top P B_2 - R_2) \tilde{\Sigma}_2 \right).
    \end{align*}

    \textbf{Step 1: Maximizer (Player 2).} We solve $\sup_{\tilde{\Gamma}_2, \tilde{\Sigma}_2 \succeq \mathbf{0}} \tilde{H}_2$ subject to $\|\tilde{\Gamma}_2\|^2 + \tr(\tilde{\Sigma}_2) \le \gamma_2^2 \|x\|^2$. Notice that the objective function is concave (strictly concave in $\tilde{\Gamma}_2$ and linear in $\tilde{\Sigma}_2$) and the constraints are convex. For any $x \neq 0$, Slater's condition is strictly satisfied (e.g., by selecting an interior point $\tilde{\Gamma}_2 = \mathbf{0}$ and $\tilde{\Sigma}_2 = \epsilon I \succ \mathbf{0}$), which mathematically guarantees strong duality. Thus, the Karush-Kuhn-Tucker (KKT) conditions are both necessary and sufficient for global optimality. Formulating the Lagrangian $\mathcal{L}_2 = 2x^\top P B_2 \tilde{\Gamma}_2 - \tilde{\Gamma}_2^\top (R_2 + \lambda I) \tilde{\Gamma}_2 + \tr((Q_{2,eq} - \lambda I)\tilde{\Sigma}_2) + \lambda \gamma_2^2 \|x\|^2$, we derive:
    
    (i) \textit{Dual Monotonicity and Optimal Mean:} For the supremum of $\mathcal{L}_2$ to be bounded, dual feasibility strictly requires $Q_{2,eq} - \lambda I \preceq \mathbf{0}$, hence $\lambda \ge \max(0, \lambda_{\max}(Q_{2,eq})) \triangleq \lambda_2(t)$. For any strictly feasible $\lambda > \lambda_2(t)$, the matrix $Q_{2,eq} - \lambda I$ is strictly negative definite, enforcing $\tilde{\Sigma}_2 = \mathbf{0}$, and the optimal unconstrained mean is $\tilde{\Gamma}_2^*(\lambda) = (R_2 + \lambda I)^{-1} B_2^\top P x$. Substituting these yields the dual function $g(\lambda) = x^\top P B_2 (R_2 + \lambda I)^{-1} B_2^\top P x + \lambda \gamma_2^2 \|x\|^2$. Its derivative evaluates to:
    $g'(\lambda) = \gamma_2^2 \|x\|^2 - \|\tilde{\Gamma}_2^*(\lambda)\|^2.$
    Under Assumption \ref{assump:actuation_capacity}, the matrix inequality $\gamma_2^2 I - P B_2(R_2+\lambda_2 I)^{-2}B_2^\top P \succeq \mathbf{0}$ inherently holds, mathematically guaranteeing $g'(\lambda_2(t)) \ge 0$. Furthermore, since the matrix inverse $(R_2 + \lambda I)^{-2}$ strictly decreases as $\lambda$ increases, the term $\|\tilde{\Gamma}_2^*(\lambda)\|^2$ strictly decreases, yielding $g'(\lambda) > 0$ for all $\lambda > \lambda_2(t)$. Because $g(\lambda)$ is strictly increasing on its feasible domain $[\lambda_2(t), \infty)$, the infimum of the dual problem $\min_{\lambda} g(\lambda)$ is uniquely attained at the left boundary $\lambda^* = \lambda_2(t)$. The optimal mean is therefore uniquely determined as $\tilde{\Gamma}_2^*(t) = (R_2 + \lambda_2(t) I)^{-1} B_2^\top P x \triangleq K_2(t)x$.

    (ii) \textit{Complementary Slackness and Optimal Variance:} The structure of the optimal variance strictly bifurcates based on $\lambda_{\max}(Q_{2,eq})$:
    \begin{itemize}
        \item If $\lambda_{\max}(Q_{2,eq}) \le 0$, the multiplier is $\lambda^* = 0$ and $Q_{2,eq} \preceq \mathbf{0}$. To maximize the non-positive trace term $\tr(Q_{2,eq}\tilde{\Sigma}_2)$, Player 2 optimally avoids any variance penalty by assigning $\tilde{\Sigma}_2^* = \mathbf{0}$.
        \item If $\lambda_{\max}(Q_{2,eq}) > 0$, the optimal multiplier is strictly positive ($\lambda^* = \lambda_{\max}(Q_{2,eq}) > 0$). The matrix $M \triangleq Q_{2,eq} - \lambda^* I \preceq \mathbf{0}$. To maximize $\tr(M\tilde{\Sigma}_2)$, the variance must align with the null space of $M$, resulting in a rank-1 form: $\tilde{\Sigma}_2^* = c \cdot v_2 v_2^\top$, where $v_2$ is the principal eigenvector of $Q_{2,eq}$. Crucially, the KKT complementary slackness requires $\lambda^* \big( \gamma_2^2 \|x\|^2 - \|\tilde{\Gamma}_2^*\|^2 - \tr(\tilde{\Sigma}_2^*) \big) = 0$. Since $\lambda^* > 0$, the energy constraint must be strictly active, uniquely dictating that the variance must absorb all residual energy: $c = \gamma_2^2 \|x\|^2 - \|K_2 x\|^2$. 
    \end{itemize}
    Combining both cases structurally yields the optimal variance augmented with the indicator function $\mathbb{I}_{\{\lambda_{\max}(Q_{2,eq}) > 0\}}$ as presented in the theorem.

    \textbf{Step 2: Minimizer (Player 1).} Given $Q_{1,eq} \succ \mathbf{0}$, the trace term $\tr(Q_{1,eq}\tilde{\Sigma}_1)$ is minimized at $\tilde{\Sigma}_1^* = \mathbf{0}$. Minimizing $\tilde{H}_1$ w.r.t $\tilde{\Gamma}_1$ yields $\tilde{\Gamma}_1^* = -R_1^{-1} B_1^\top P x$.

    \textbf{Step 3: Synthesis.} Substituting the optimal controls back into the HJI equation and identifying coefficients quadratic in $x$ yields the GRDE \eqref{eq:generalized_riccati}. The existence of $P(t)$ on $[t_0, T]$ is guaranteed by the capacity condition, completing the proof.
\end{proof}

\section{Proof of Lemma \ref{lem:smooth_replacement}}\label{app:lem:smooth_replacement}
\begin{proof}
    The bounds for $\epsilon_1, \epsilon_2,$ and $\epsilon_3$ are evaluated systematically through a four-step procedure: error dynamics analysis, state bound estimation, Grönwall's inequality application, and cost difference expansion. Let $e(t) = \tilde{x}^d(t) - \tilde{x}^*(t)$ be the state tracking error.

    \noindent\textbf{Part I: Bound for $\epsilon_1 = \mathcal{O}(\bar{\pi})$} 

    Evaluating $\epsilon_1$ corresponds to replacing $\tilde{\alpha}^*$ with $\operatorname{ZOH}_\pi[\tilde{\alpha}^*]$ against $\phi_2(\beta^*)$. 
    From Theorem \ref{thm:dg-lq-solution}, Player 1's analytical optimal variance strictly vanishes ($\tilde{\Lambda}_1^* \equiv \mathbf{0}$). Thus, the ZOH substitution only affects the drift. Let $\rho_1 = \max_t \|K_1(t)\|$.
    
    \underline{\textit{Step 1: Error dynamics.}} Applying Itô's formula to $\|e(t)\|^2$ for $t\in[t_{k-1},t_k)$:
    \begin{equation*}
    \begin{aligned}
       d\|e(t)\|^2 &= 2e(t)^\top \big\{A\tilde{x}^d(t) - A\tilde{x}^*(t) - B_1[K_1(t_{k-1})\tilde{x}^d(t_{k-1}) \\
       &\qquad - K_1(t)\tilde{x}^*(t)] +B_2[\tilde{\Gamma}_2^d(t_{k-1}) - \tilde{\Gamma}_2^*(t_{k-1})]\big\}dt\\
       &\quad +2e(t)^\top \delta_{\pi}^{\frac{1}{2}}(t)[0, B_2\tilde{\Lambda}_2^d(t_{k-1}) - B_2\tilde{\Lambda}_2^*(t_{k-1})]dW_t\\
       &\quad +\delta_{\pi}(t)\left\|B_2[\tilde{\Lambda}_2^d(t_{k-1}) - \tilde{\Lambda}_2^*(t_{k-1})]\right\|_F^2 dt.
    \end{aligned}
    \end{equation*}

    \underline{\textit{Step 2: States upper bound.}} Given that the opponent's strategy $\phi_2(\beta^*)$ satisfies linear growth with parameter $\kappa_2$ (note that the linear growth property of the optimal strategies is a direct structural consequence of the state-dependent energy constraints imposed in Definition 1), applying the concavity of the $2$-norm and Grönwall’s inequality guarantees uniformly bounded second moments. Define $M^* = \|A\|+\rho_1\|B_1\|+\kappa_2\|B_2\|$. It rigorously follows that $\mathbb{E}\|\tilde{x}^*(t)\|^2 \leq M^*_T$ and $\mathbb{E}\|\tilde{x}^d(t)\|^2 \le M_T^d$ for some positive constants $M_T^*$ and $M_T^d$.

    \underline{\textit{Step 3: Error upper bound.}} By expanding the drift difference into a cross-term $\mathbb{E}\|B_1K_1(t_{k-1})[\tilde{x}^d(t_{k-1})-\tilde{x}^*(t)]\|$ and a Lipschitz term $\mathbb{E}\|B_1[K_1(t_{k-1})-K_1(t)]\tilde{x}^*(t)\|$, we bound the difference strictly by $\mathcal{O}(\bar{\pi})$. Because $\phi_2(\beta^*)$ is globally Lipschitz (Assumption \ref{assump:lipschitz_mixed}), there exist constants $a_1, a_2, b_1, b_2 > 0$ such that the expected drift difference is bounded by $a_1\mathbb{E}\|e(t)\| + b_1\bar\pi$, and the Frobenius norm difference satisfies $\mathbb{E}\|B_2[\tilde{\Lambda}_2^d - \tilde{\Lambda}_2^*]\|_F \leq a_2\mathbb{E}\|e(t)\| + b_2\bar\pi$.
    Taking expectations on $d\|e(t)\|^2$:
    \begin{equation*}
        \frac{d}{dt}\mathbb{E}\|e(t)\|^2 \leq 2(\|A\|+a_1^2+a_2^2\bar\pi)\mathbb{E}\|e(t)\|^2 +2b_1^2\bar\pi^2 + 2b_2^2\bar\pi^3.
    \end{equation*}
    Applying Grönwall’s inequality yields the mean square state error $\mathbb{E}\|e(t)\|^2 = \mathcal{O}(\bar{\pi}^2)$.

    \underline{\textit{Step 4: Cost difference.}} Substituting $\mathbb{E}\|e(t)\|^2 = \mathcal{O}(\bar{\pi}^2)$ into the quadratic cost functional, the state penalty deviation is bounded via the Cauchy-Schwarz inequality: $\mathbb{E}[\tilde{x}^{*\top} Q \tilde{x}^* - \tilde{x}^{d\top} Q \tilde{x}^d] \le 2\|Q\| \sqrt{\max(M_T^d, M_T^*)} \sqrt{\mathbb{E}\|e(t)\|^2} = \mathcal{O}(\bar{\pi})$. The linear-quadratic control cost deviations structurally follow $\mathcal{O}(\bar{\pi})$. Thus, $\epsilon_1 = \mathcal{O}(\bar{\pi})$.

    \vspace{0.5em}
    \noindent\textbf{Part II: Bound for $\epsilon_2 = \mathcal{O}(\bar{\pi}^{\frac{1}{2}})$} 

    Evaluating $\epsilon_2$ replaces $\tilde{\beta}^*$ with $\operatorname{ZOH}_\pi[\tilde{\beta}^*]$ against $\phi_1(\alpha^*)$. Because $\tilde{\beta}^*$ is the exact optimal response (Theorem \ref{thm:dg-lq-solution}), its variance involves the indicator function $\mathbb{I}_{\{\lambda_{\max}(Q_{2,eq}) > 0\}}$, which introduces finite non-Lipschitz jump discontinuities.
    
    \underline{\textit{Step 1 \& 2:}} The error dynamics and state bounding follow the symmetric structure as Part I, guaranteeing bounded second moments $M_T^*$ and $M_T^d$.

    \underline{\textit{Step 3: Error upper bound.}} Crucially, although the non-Lipschitz indicator function precludes standard Lipschitz bounding techniques, the structural state-dependent energy constraints universally truncate the maximum possible variance injection. By applying the parallelogram inequality and the submultiplicativity of the Frobenius norm, we have:
    \begin{equation*}
        \begin{aligned}
            &\left\| B_2\tilde{\Lambda}_2^d(t_{k-1})-B_2\tilde{\Lambda}_2^*(t_{k-1})\right\|_F^2 \\
            &\leq 2\|B_2\|^2 \left( \|\tilde{\Lambda}_2^d(t_{k-1})\|_F^2 + \|\tilde{\Lambda}_2^*(t_{k-1})\|_F^2 \right) \\
            &= 2\|B_2\|^2 \left( \tr(\tilde{\Sigma}_2^d(t_{k-1})) + \tr(\tilde{\Sigma}_2^*(t_{k-1})) \right).
        \end{aligned}
    \end{equation*}
    Because the traces of the optimal variances are explicitly bounded by the state-dependent energy capacity $\gamma_2^2 \|\tilde{x}\|^2$, this dynamically evaluates to:
    \begin{equation*}
        \begin{aligned}
            &\left\| B_2\tilde{\Lambda}_2^d(t_{k-1})-B_2\tilde{\Lambda}_2^*(t_{k-1})\right\|_F^2 \\
            &\leq 2\|B_2\|^2 \gamma_2^2 \left( \|\tilde{x}^d(t_{k-1})\|^2 + \|\tilde{x}^*(t_{k-1})\|^2 \right).
        \end{aligned}
    \end{equation*}
    Based on the uniform state bounds established in Step 2, there exists a constant $C_x > 0$ such that $\mathbb{E}\|\tilde{x}^d(t_{k-1})\|^2 + \mathbb{E}\|\tilde{x}^*(t_{k-1})\|^2 \le C_x$.
    Taking expectations on $d\|e(t)\|^2$ injects this bounded local variance jump scaled by $\delta_\pi(t) \sim \mathcal{O}(\bar{\pi})$:
    \begin{equation*}
        \frac{d}{dt}\mathbb{E}\|e(t)\|^2 \leq 2(\|A\|+a_1^2)\mathbb{E}\|e(t)\|^2 + 2b_1^2\bar\pi^2 + 2\|B_2\|^2 \gamma_2^2 C_x \bar\pi.
    \end{equation*}
    Due to the $\mathcal{O}(\bar{\pi})$ constant term, applying Grönwall’s inequality degrades the accumulated state error to $\mathbb{E}\|e(t)\|^2 = \mathcal{O}(\bar{\pi})$.

    \underline{\textit{Step 4: Cost difference.}} Substituting $\mathbb{E}\|e(t)\|^2 = \mathcal{O}(\bar{\pi})$ into the cost functional, the state penalty expands via Cauchy-Schwarz as $\mathbb{E}[\tilde{x}^{*\top} Q \tilde{x}^* - \tilde{x}^{d\top} Q \tilde{x}^d] \le C_Q \sqrt{\mathbb{E}\|e(t)\|^2} = \mathcal{O}(\bar{\pi}^{\frac{1}{2}})$. 
    Crucially, because the optimal variance $\tilde{\Sigma}_2^*(t, \tilde{x})$ operates as a state-feedback mechanism, its deviation must be rigorously decoupled into a state mismatch error and a time discretization error:
    \begin{equation*}
        \begin{aligned}
            \tilde{\Sigma}_2^d(t) - \tilde{\Sigma}_2^*(t) =& \underbrace{\tilde{\Sigma}_2^*(t_{k-1}, \tilde{x}^d(t_{k-1})) - \tilde{\Sigma}_2^*(t_{k-1}, \tilde{x}^*(t_{k-1}))}_{\text{State mismatch error}} \\
            &+ \underbrace{\tilde{\Sigma}_2^*(t_{k-1}, \tilde{x}^*(t_{k-1})) - \tilde{\Sigma}_2^*(t, \tilde{x}^*(t))}_{\text{Time discretization error}}.
        \end{aligned}
    \end{equation*}
    For the state mismatch error, because $\tilde{\Sigma}_2^*(t, \tilde{x})$ is quadratic in $\tilde{x}$, Cauchy-Schwarz bounds the expected deviation by $\mathcal{O}(\sqrt{\mathbb{E}\|e(t_{k-1})\|^2}) = \mathcal{O}(\bar{\pi}^{\frac{1}{2}})$. 

    For the time discretization error, note that the indicator function $\mathbb{I}_{\{\lambda_{\max}(Q_{2,eq}(t)) > 0\}}$ relies exclusively on the deterministic Riccati solution $P(t)$, thus introducing only a finite number of deterministic jumps. Furthermore, the expected quadratic state term $\mathbb{E}[\tilde{x}^*(t)\tilde{x}^*(t)^\top]$ defines the state covariance matrix, which evolves according to a smooth deterministic ordinary differential equation. Consequently, the expectation $\mathbb{E}[\tr(\tilde{\Sigma}_2^*(t, \tilde{x}^*(t)) R_2)]$ intrinsically forms a piecewise continuously differentiable function. As a function of bounded variation, its left Riemann sum approximation error mathematically guarantees an $\mathcal{O}(\bar{\pi})$ bound.
    
    Summing these composite mechanisms, the $\mathcal{O}(\bar{\pi}^{\frac{1}{2}})$ tracking error strictly dominates the variance cost difference, yielding $\epsilon_2 = \mathcal{O}(\bar{\pi}^{\frac{1}{2}})$.

    \vspace{0.5em}
    \noindent\textbf{Part III: Bound for $\epsilon_3 = \mathcal{O}(\bar{\pi}^{\frac{1}{2}})$} 

    Evaluating $\epsilon_3$ involves ZOH substitutions for both continuous optimal strategies. The error dynamics inherit both the Lipschitz drift delays from Player 1 and the non-Lipschitz variance jumps from Player 2. Driven by the dominant diffusion discontinuity identified in Part II, Grönwall's inequality yields an identical mean square tracking bound $\mathbb{E}\|e(t)\|^2 = \mathcal{O}(\bar{\pi})$. By expanding the state and control cost functionals via the Cauchy-Schwarz inequality, the quadratic discrepancies are structurally governed by $\mathcal{O}(\sqrt{\mathbb{E}\|e(t)\|^2}) + \mathcal{O}(\bar{\pi}) = \mathcal{O}(\bar{\pi}^{\frac{1}{2}})$. Thus, $\epsilon_3 = \mathcal{O}(\bar{\pi}^{\frac{1}{2}})$.
\end{proof}

\section{Proof of Lemma \ref{lem:best_response_deviation}}\label{app:lem:best_response_deviation}
\begin{proof}
    We rigorously establish the deviation bounds for $\epsilon_4$ and $\epsilon_5$ by analyzing the closed-loop continuous-time optimal tracking best responses against the opponent's strategy mismatch.

    \noindent\textbf{Part I: Bound for $\epsilon_4 = \mathcal{O}(\bar{\pi}^{\frac{1}{2}})$} 

    Evaluating $\epsilon_4$ quantifies $|\tilde{J}(\tilde{\alpha}^{br}, \operatorname{ZOH}_\pi[\tilde{\beta}^*]) - \tilde{J}(\tilde{\alpha}^{br}, \tilde{\beta}^*)|$. Because the opponent's strategy $\operatorname{ZOH}_\pi[\tilde{\beta}^*]$ preserves linear growth with respect to the state, the continuous optimal best response $\tilde{\alpha}^{br}$ amounts to solving an LQ tracking problem subject to piecewise-constant external signals. Standard linear-quadratic tracking theory ensures that $\tilde{\alpha}^{br}$ consists of a smooth linear state feedback term $\tilde{\Gamma}_1^{br}(x) = -K_1^{br}(t)x + d(t)$ and zero variance $\tilde{\Lambda}_1^{br} = \mathbf{0}$, meaning $\tilde{\alpha}^{br}$ is uniformly globally Lipschitz.

    Let $e(t) = \tilde{x}^d(t) - \tilde{x}^*(t)$ denote the state error when evaluating the fixed tracking response $\tilde{\alpha}^{br}$ against $\operatorname{ZOH}_\pi[\tilde{\beta}^*]$ versus $\tilde{\beta}^*$. Because $\tilde{\beta}^*$ contains the non-Lipschitz variance indicator jumps (Theorem \ref{thm:dg-lq-solution}), the diffusion error over $[t_{k-1}, t_k)$ mimics the exact structural discrepancy detailed in Part II of Lemma \ref{lem:smooth_replacement}. Namely, $\mathbb{E}\|B_2(\tilde{\Lambda}_2^d - \tilde{\Lambda}_2^*)\|_F^2 \le \mathcal{O}(1)$. 
    Applying Itô's formula and Grönwall's inequality yields a mean square state error of $\mathbb{E}\|e(t)\|^2 = \mathcal{O}(\bar{\pi})$.
    
    Bounding the quadratic state penalty difference exclusively through the Cauchy-Schwarz inequality provides $\mathbb{E}[\tilde{x}^{*\top} Q \tilde{x}^* - \tilde{x}^{d\top} Q \tilde{x}^d] \le C \sqrt{\mathbb{E}\|e(t)\|^2} = \mathcal{O}(\bar{\pi}^{\frac{1}{2}})$. Therefore, $\epsilon_4 = \mathcal{O}(\bar{\pi}^{\frac{1}{2}})$.

    \vspace{0.5em}
    \noindent\textbf{Part II: Bound for $\epsilon_5 = \mathcal{O}(\bar{\pi})$} 

    Symmetrically, evaluating $\epsilon_5$ requires to quantify $|\tilde{J}(\operatorname{ZOH}_\pi[\tilde{\alpha}^*], \tilde{\beta}^{br}) - \tilde{J}(\tilde{\alpha}^*, \tilde{\beta}^{br})|$. The optimal tracking response $\tilde{\beta}^{br}$ is naturally globally Lipschitz.

    Crucially, the deviation is driven exclusively by the ZOH mechanism acting on Player 1's analytical strategy $\tilde{\alpha}^*$. Since Player 1's exact optimal variance is structurally zero ($\tilde{\Lambda}_1^* \equiv \mathbf{0}$), $\operatorname{ZOH}_\pi[\tilde{\alpha}^*]$ does not introduce any non-Lipschitz jump discrepancies in the diffusion matrix (i.e., $\mathbb{E}\|B_1(\tilde{\Lambda}_1^d(t_{k-1}) - \tilde{\Lambda}_1^*(t_{k-1}))\|_F^2 \equiv 0$). 
    The state error $e(t)$ is propelled purely by the Lipschitz continuous drift delays, perfectly analogous to Part I of Lemma \ref{lem:smooth_replacement}. Consequently, Grönwall's inequality preserves the higher-order tracking bound, yielding $\mathbb{E}\|e(t)\|^2 = \mathcal{O}(\bar{\pi}^2)$.
    Substituting this refined error back into the cost functional immediately bounds the state penalty difference via Cauchy-Schwarz by $\mathcal{O}(\sqrt{\mathbb{E}\|e(t)\|^2}) = \mathcal{O}(\bar{\pi})$. The control cost differences follow similarly, ensuring $\epsilon_5 = \mathcal{O}(\bar{\pi})$.
\end{proof}

\end{appendices}

\begin{IEEEbiographynophoto}{Tao Xu}
	(S'22) received a B.S. degree in the School of Mathematical Sciences from Shanghai Jiao Tong University (SJTU), Shanghai, China. He is currently working toward the Ph.D. degree with the Department of Automation, SJTU. He is a member of Intelligent of Wireless Networking and Cooperative Control Group. His research interests include probabilistic prediction, distributionally robust optimization, dynamic games, and robotics.
\end{IEEEbiographynophoto}

\begin{IEEEbiographynophoto}{Wang Xi}
	(S'24) received the B.S. degree in the School of Electronic Information and Electrical Engineering from Shanghai Jiao Tong University (SJTU), Shanghai, China. Since 2023, he has been with the Department of Automation, SJTU. He is a member of Intelligent of Wireless Networking and Cooperative Control Group. His current research interests include differential games and its applications in mechatronic systems.
\end{IEEEbiographynophoto}

\begin{IEEEbiographynophoto}{Jianping He}
	(SM'19) is currently a full professor in the Department of Automation at Shanghai Jiao Tong University. He received the Ph.D. degree in control science and engineering from Zhejiang University, Hangzhou, China, in 2013, and had been a research fellow in the Department of Electrical and Computer Engineering at University of Victoria, Canada, from Dec. 2013 to Mar. 2017. His research interests mainly include the distributed learning, control and optimization, security and privacy in network systems.

	Dr. He serves as an Associate Editor for IEEE Trans. Control of Network Systems, IEEE Trans. on Vehicular Technology, IEEE Open Journal of Vehicular Technology, and KSII Trans. Internet and Information Systems. He was also a Guest Editor of IEEE TAC, International Journal of Robust and Nonlinear Control, etc. He was the winner of Outstanding Thesis Award, Chinese Association of Automation, 2015. He received the best paper award from IEEE WCSP'17, the best conference paper award from IEEE PESGM'17, and was a finalist for the best student paper award from IEEE ICCA'17, and the finalist best conference paper award from IEEE VTC20-FALL.
\end{IEEEbiographynophoto}

\end{document}